\DeclareMathAlphabet{\mathpzc}{OT1}{pzc}{m}{it}
\newtheorem{mainthm}{Theorem}
\newtheorem{cor}[subsubsection]{Corollary}
\newtheorem{lem}[subsubsection]{Lemma}
\newtheorem{prop}[subsubsection]{Proposition}
\newtheorem{thm}[subsubsection]{Theorem}
\newtheorem{defin}[subsubsection]{Definition}
\theoremstyle{remark}
\newtheorem{rem}[subsubsection]{Remark}
\newtheorem{example}[subsubsection]{Example}
\numberwithin{equation}{section}
\newcommand{\nc}{\newcommand}
\nc{\renc}{\renewcommand}
\nc{\ssec}{\subsection}
\nc{\sssec}{\subsubsection}
\nc{\on}{\operatorname}
\nc\ol{\overline}
\nc\wt{\widetilde}
\nc\tboxtimes{\wt{\boxtimes}}
\nc\tstar{\wt{\star}}
\nc{\alp}{\alpha}
\nc{\ZZ}{{\mathbb Z}}
\nc{\NN}{{\mathbb N}}
\nc{\OO}{{\mathbb O}}
\renc{\SS}{{\mathbb S}}
\nc{\DD}{{\mathbb D}}
\nc{\GG}{{\mathbb G}}
\renewcommand{\AA}{{\mathbb A}}
\nc{\Fq}{{\mathbb F}_q}
\nc{\Fqb}{\ol{{\mathbb F}_q}}
\nc{\Ql}{\ol{{\mathbb Q}_\ell}}
\nc{\id}{\text{id}}
\nc\X{\mathcal X}
\nc{\Hom}{\on{Hom}}
\nc{\Lie}{\on{Lie}}
\nc{\Loc}{\on{Loc}}
\nc{\Pic}{\on{Pic}}
\nc{\Bun}{\on{Bun}}
\nc{\IC}{\on{IC}}
\nc{\Aut}{\on{Aut}}
\nc{\rk}{\on{rk}}
\nc{\Sh}{\on{Sh}}
\nc{\Perv}{\on{Perv}}
\nc{\pos}{{\on{pos}}}
\nc{\Conv}{\on{Conv}}
\nc{\Sph}{\on{Sph}}
\nc{\Sym}{\on{Sym}}
\nc{\BunBb}{\overline{\Bun}_B}
\nc{\BunNb}{\overline{\Bun}_N}
\nc{\BunTb}{\overline{\Bun}_T}
\nc{\BunBbm}{\overline{\Bun}_{B^-}}
\nc{\BunBbel}{\overline{\Bun}_{B,el}}
\nc{\BunBbmel}{\overline{\Bun}_{B^-,el}}
\nc{\Buno}{\overset{o}{\Bun}}
\nc{\BunPb}{{\overline{\Bun}_P}}
\nc{\BunBM}{\Bun_{B(M)}}
\nc{\BunBMb}{\overline{\Bun}_{B(M)}}
\nc{\BunPbw}{{\widetilde{\Bun}_P}}
\nc{\BunBP}{\widetilde{\Bun}_{B,P}}
\nc{\GUb}{\overline{G/U}}
\nc{\GUPb}{\overline{G/U(P)}}
\nc\syminfty{\on{Sym}^{\infty}}
\nc\lal{\ol{\lambda}}
\nc\xl{\ol{x}}
\nc\thl{\ol{\theta}}
\nc\nul{\ol{\nu}}
\nc\mul{\ol{\mu}}
\nc{\oX}{\overset{\circ}{X}{}}
\nc{\hl}{\overset{\leftarrow}h{}}
\nc{\hr}{\overset{\rightarrow}h{}}
\nc{\M}{{\mathcal M}}
\nc{\N}{{\mathcal N}}
\nc{\F}{{\mathcal F}}
\nc{\D}{{\mathcal D}}
\nc{\Y}{{\mathcal Y}}
\nc{\G}{{\mathcal G}}
\nc{\E}{{\mathcal E}}
\nc{\CalC}{{\mathcal C}}
\nc\Dh{\widehat{\D}}
\renewcommand{\O}{{\mathcal O}}
\nc{\K}{{\mathcal K}}
\nc{\T}{{\mathcal T}}
\nc{\V}{{\mathcal V}}
\renc{\P}{{\mathcal P}}
\nc{\A}{{\AA}}
\nc{\B}{{\BB}}
\nc{\U}{{\mathcal U}}
\renewcommand{\L}{{\mathcal L}}
\nc{\frn}{{\check{\mathfrak u}(P)}}
\nc{\fC}{\mathfrak C}
\nc\f{{\mathfrak f}}
\nc{\qo}{{\mathfrak q}}
\nc{\po}{{\mathfrak p}}
\nc{\s}{{\mathfrak s}}
\nc\w{\text{w}}
\renewcommand{\r}{{\mathfrak r}}
\renewcommand{\mod}{{\on{-}\mathsf{mod}}}
\newcommand{\bimod}{{\on{-}\mathsf{bimod}}}
\nc\Spec{\on{Spec}}
\nc\Mod{\on{Mod}}
\nc{\tw}{\widetilde{\mathfrak t}}
\nc{\pw}{\widetilde{\mathfrak p}}
\nc{\qw}{\widetilde{\mathfrak q}}
\nc{\jw}{\widetilde j}
\nc{\grb}{\overline{\Gr_{X^{\fset}}}}
\nc{\I}{\mathcal I}
\renewcommand{\i}{\mathfrak i}
\renewcommand{\j}{\mathfrak j}
\nc{\lambdach}{{\check\lambda}}
\nc{\Lambdach}{{\check\Lambda}{}}
\nc{\much}{{\check\mu}}
\nc{\omegach}{{\check\omega}}
\nc{\nuch}{{\check\nu}}
\nc{\etach}{{\check\eta}}
\nc{\alphach}{{\check\alpha}}
\nc{\rhoch}{{\check\rho}}
\nc{\Hb}{\overline{\H}}
\nc{\BA}{{\mathbb{A}}}
\nc{\BB}{\mathbb{B}}
\nc{\BC}{{\mathbb{C}}}
\nc{\BD}{{\mathbb{D}}}
\nc{\BE}{{\mathbb{E}}}
\nc{\BF}{{\mathbb{F}}}
\nc{\BG}{{\mathbb{G}}}
\nc{\BH}{{\mathbb{H}}}
\nc{\BI}{{\mathbb{I}}}
\nc{\BM}{{\mathbb{M}}}
\nc{\BN}{{\mathbb{N}}}
\nc{\BO}{{\mathbb{O}}}
\nc{\BP}{{\mathbb{P}}}
\nc{\BQ}{{\mathbb{Q}}}
\nc{\BR}{{\mathbb{R}}}
\nc{\BS}{{\mathbb{S}}}
\nc{\BT}{{\mathbb{T}}}
\nc{\BV}{{\mathbb{V}}}
\nc{\BZ}{{\mathbb{Z}}}
\nc{\bbone}{\mathbbm{1}}
\nc{\bbA}{{\mathbb{A}}}
\nc{\bbB}{\mathbb{B}}
\nc{\bbC}{{\mathbb{C}}}
\nc{\bbD}{{\mathbb{D}}}
\nc{\bbE}{{\mathbb{E}}}
\nc{\bbF}{{\mathbb{F}}}
\nc{\bbG}{{\mathbb{G}}}
\nc{\bbH}{{\mathbb{H}}}
\nc{\bbI}{{\mathbb{I}}}
\nc{\bbL}{{\mathbb{L}}}
\nc{\bbM}{{\mathbb{M}}}
\nc{\bbN}{{\mathbb{N}}}
\nc{\bbO}{{\mathbb{O}}}
\nc{\bbP}{{\mathbb{P}}}
\nc{\bbQ}{{\mathbb{Q}}}
\nc{\bbR}{{\mathbb{R}}}
\nc{\bbS}{{\mathbb{S}}}
\nc{\bbT}{{\mathbb{T}}}
\nc{\bbU}{{\mathbb{U}}}
\nc{\bbV}{{\mathbb{V}}}
\nc{\bbW}{{\mathbb{W}}}
\nc{\bbX}{{\mathbb{X}}}
\nc{\bbY}{{\mathbb{Y}}}
\nc{\bbZ}{{\mathbb{Z}}}
\nc{\CA}{{\mathcal{A}}}
\nc{\CB}{{\mathcal{B}}}
\nc{\CE}{{\mathcal{E}}}
\nc{\CF}{{\mathcal{F}}}
\nc{\CH}{{\mathcal{H}}}
\nc{\CL}{{\mathcal{L}}}
\nc{\CC}{{\mathcal{C}}}
\nc{\CG}{{\mathcal{G}}}
\nc{\CM}{{\mathcal{M}}}
\nc{\CN}{{\mathcal{N}}}
\nc{\CK}{{\mathcal{K}}}
\nc{\CO}{{\mathcal{O}}}
\nc{\CP}{{\mathcal{P}}}
\nc{\CQ}{{\mathcal{Q}}}
\nc{\CR}{{\mathcal{R}}}
\nc{\CS}{{\mathcal{S}}}
\nc{\CU}{{\mathcal{U}}}
\nc{\CV}{{\mathcal{V}}}
\nc{\CW}{{\mathcal{W}}}
\nc{\CX}{{\mathcal{X}}}
\nc{\CY}{{\mathcal{Y}}}
\nc{\CZ}{{\mathcal{Z}}}
\nc{\CI}{{\mathcal{I}}}
\nc{\csM}{{\check{\mathcal A}}{}}
\nc{\oM}{{\overset{\circ}{\mathcal M}}{}}
\nc{\obM}{{\overset{\circ}{\mathbf M}}{}}
\nc{\oCA}{{\overset{\circ}{\mathcal A}}{}}
\nc{\obA}{{\overset{\circ}{\mathbf A}}{}}
\nc{\ooM}{{\overset{\circ}{M}}{}}
\nc{\osM}{{\overset{\circ}{\mathsf M}}{}}
\nc{\vM}{{\overset{\bullet}{\mathcal M}}{}}
\nc{\nM}{{\underset{\bullet}{\mathcal M}}{}}
\nc{\oD}{{\overset{\circ}{\mathcal D}}{}}
\nc{\obC}{{\overset{\circ}{\mathbf C}}{}}
\nc{\obD}{{\overset{\circ}{\mathbf D}}{}}
\nc{\oA}{{\overset{\circ}{\mathbb A}}{}}
\nc{\op}{{\overset{\bullet}{\mathbf p}}{}}
\nc{\oU}{{\overset{\bullet}{\mathcal U}}{}}
\nc{\oZ}{{\overset{\circ}{\mathcal Z}}{}}
\nc{\ofZ}{{\overset{\circ}{\mathfrak Z}}{}}
\nc{\oF}{{\overset{\circ}{\fF}}}
\nc{\fa}{{\mathfrak{a}}}
\nc{\fb}{{\mathfrak{b}}}
\nc{\fc}{{\mathfrak{c}}}
\nc{\fd}{{\mathfrak{d}}}
\nc{\ff}{{\mathfrak{f}}}
\nc{\fg}{{\mathfrak{g}}}
\nc{\fgl}{{\mathfrak{gl}}}
\nc{\fh}{{\mathfrak{h}}}
\nc{\fj}{{\mathfrak{j}}}
\nc{\fl}{{\mathfrak{l}}}
\nc{\fm}{{\mathfrak{m}}}
\nc{\fn}{{\mathfrak{n}}}
\nc{\fu}{{\mathfrak{u}}}
\nc{\fp}{{\mathfrak{p}}}
\nc{\fr}{{\mathfrak{r}}}
\nc{\fs}{{\mathfrak{s}}}
\nc{\ft}{{\mathfrak{t}}}
\nc{\fz}{{\mathfrak{z}}}
\nc{\fsl}{{\mathfrak{sl}}}
\nc{\hsl}{{\widehat{\mathfrak{sl}}}}
\nc{\hgl}{{\widehat{\mathfrak{gl}}}}
\nc{\hg}{{\widehat{\mathfrak{g}}}}
\nc{\chg}{{\widehat{\mathfrak{g}}}{}^\vee}
\nc{\hn}{{\widehat{\mathfrak{n}}}}
\nc{\chn}{{\widehat{\mathfrak{n}}}{}^\vee}
\nc{\fA}{{\mathfrak{A}}}
\nc{\fB}{{\mathfrak{B}}}
\nc{\fD}{{\mathfrak{D}}}
\nc{\fE}{{\mathfrak{E}}}
\nc{\fF}{{\mathfrak{F}}}
\nc{\fG}{{\mathfrak{G}}}
\nc{\fK}{{\mathfrak{K}}}
\nc{\fL}{{\mathfrak{L}}}
\nc{\fM}{{\mathfrak{M}}}
\nc{\fN}{{\mathfrak{N}}}
\nc{\fP}{{\mathfrak{P}}}
\nc{\fU}{{\mathfrak{U}}}
\nc{\fV}{{\mathfrak{V}}}
\nc{\fX}{{\mathfrak{X}}}
\nc{\fY}{{\mathfrak{Y}}}
\nc{\fZ}{{\mathfrak{Z}}}
\nc{\bb}{{\mathbf{b}}}
\nc{\bc}{{\mathbf{c}}}
\nc{\bd}{{\mathbf{d}}}
\nc{\bbf}{{\mathbf{f}}}
\nc{\be}{{\mathbf{e}}}
\nc{\bg}{{\mathbf{g}}}
\nc{\bi}{{\mathbf{i}}}
\nc{\bj}{{\mathbf{j}}}
\nc{\bn}{{\mathbf{n}}}
\nc{\bo}{{\mathbf{o}}}
\nc{\bp}{{\mathbf{p}}}
\nc{\bq}{{\mathbf{q}}}
\nc{\bt}{{\mathbf{t}}}
\nc{\bu}{{\mathbf{u}}}
\nc{\bv}{{\mathbf{v}}}
\nc{\bx}{{\mathbf{x}}}
\nc{\bs}{{\mathbf{s}}}
\nc{\by}{{\mathbf{y}}}
\nc{\bw}{{\mathbf{w}}}
\nc{\bA}{{\mathbf{A}}}
\nc{\bK}{{\mathbf{K}}}
\nc{\bB}{{\mathbf{B}}}
\nc{\bC}{{\mathbf{C}}}
\nc{\bG}{{\mathbf{G}}}
\nc{\bD}{{\mathbf{D}}}
\nc{\bH}{{\mathbf{H}}}
\nc{\bM}{{\mathbf{M}}}
\nc{\bN}{{\mathbf{N}}}
\nc{\bO}{{\mathbf{O}}}
\nc{\bT}{{\mathbf{T}}}
\nc{\bV}{{\mathbf{V}}}
\nc{\bW}{{\mathbf{W}}}
\nc{\bX}{{\mathbf{X}}}
\nc{\bZ}{{\mathbf{Z}}}
\nc{\bS}{{\mathbf{S}}}
\nc{\sA}{{\mathsf{A}}}
\nc{\sB}{{\mathsf{B}}}
\nc{\sC}{{\mathsf{C}}}
\nc{\sD}{{\mathsf{D}}}
\nc{\sF}{{\mathsf{F}}}
\nc{\sG}{{\mathsf{G}}}
\nc{\sK}{{\mathsf{K}}}
\nc{\sM}{{\mathsf{M}}}
\nc{\sO}{{\mathsf{O}}}
\nc{\sW}{{\mathsf{W}}}
\nc{\sQ}{{\mathsf{Q}}}
\nc{\sP}{{\mathsf{P}}}
\nc{\sV}{{\mathsf{V}}}
\nc{\sS}{{\mathsf{S}}}
\nc{\sT}{{\mathsf{T}}}
\nc{\sZ}{{\mathsf{Z}}}
\nc{\sfp}{{\mathsf{p}}}
\nc{\sll}{{\mathsf{l}}}
\nc{\sr}{{\mathsf{r}}}
\nc{\bk}{{\mathsf{k}}}
\nc{\sg}{{\mathsf{g}}}
\nc{\sff}{{\mathsf{f}}}
\nc{\sfb}{{\mathsf{b}}}
\nc{\sfc}{{\mathsf{c}}}
\nc{\sd}{{\mathsf{d}}}
\nc{\se}{{\mathsf{e}}}
\nc{\BK}{{\bar{K}}}
\nc{\tA}{{\widetilde{\mathbf{A}}}}
\nc{\tB}{{\widetilde{\mathcal{B}}}}
\nc{\tg}{{\widetilde{\mathfrak{g}}}}
\nc{\tG}{{\widetilde{G}}}
\nc{\TM}{{\widetilde{\mathbb{M}}}{}}
\nc{\tO}{{\widetilde{\mathsf{O}}}{}}
\nc{\tU}{{\widetilde{\mathfrak{U}}}{}}
\nc{\TZ}{{\tilde{Z}}}
\nc{\tx}{{\tilde{x}}}
\nc{\tbv}{{\tilde{\bv}}}
\nc{\tfP}{{\widetilde{\mathfrak{P}}}{}}
\nc{\tz}{{\tilde{\zeta}}}
\nc{\tmu}{{\tilde{\mu}}}
\nc{\urho}{\underline{\rho}}
\nc{\uB}{\underline{B}}
\nc{\uC}{{\underline{\mathbb{C}}}}
\nc{\ui}{\underline{i}}
\nc{\uj}{\underline{j}}
\nc{\ofP}{{\overline{\mathfrak{P}}}}
\nc{\oB}{{\overline{\mathcal{B}}}}
\nc{\og}{{\overline{\mathfrak{g}}}}
\nc{\oI}{{\overline{I}}}
\nc{\eps}{\varepsilon}
\nc{\hrho}{{\hat{\rho}}}
\nc{\one}{{\mathbf{1}}}
\nc{\two}{{\mathbf{t}}}
\nc{\Rep}{{\mathop{\operatorname{\rm Rep}}}}
\nc{\Tot}{{\mathop{\operatorname{\rm Tot}}}}
\nc{\Ker}{{\mathop{\operatorname{\rm Ker}}}}
\nc{\Hilb}{{\mathop{\operatorname{\rm Hilb}}}}
\nc{\Ext}{{\mathop{\operatorname{\rm Ext}}}}
\nc{\CHom}{{\mathop{\operatorname{{\mathcal{H}}\it om}}}}
\nc{\GL}{{\mathop{\operatorname{\rm GL}}}}
\nc{\gr}{{\mathop{\operatorname{\rm gr}}}}
\nc{\Id}{{\mathop{\operatorname{\rm Id}}}}
\nc{\de}{{\mathop{\operatorname{\rm def}}}}
\nc{\length}{{\mathop{\operatorname{\rm length}}}}
\nc{\supp}{{\mathop{\operatorname{\rm supp}}}}
\nc{\Cliff}{{\mathsf{Cliff}}}
\nc{\Fl}{\on{Fl}}
\nc{\Fib}{{\mathsf{Fib}}}
\nc{\Coh}{{\on{Coh}}}
\nc{\QCoh}{{\on{QCoh}}}
\nc{\IndCoh}{{\on{IndCoh}}}
\nc{\FCoh}{{\mathsf{FCoh}}}
\nc{\reg}{{\text{\rm reg}}}
\nc{\cplus}{{\mathbf{C}_+}}
\nc{\cminus}{{\mathbf{C}_-}}
\nc{\cthree}{{\mathbf{C}_*}}
\nc{\Qbar}{{\bar{Q}}}
\nc\Eis{\on{Eis}}
\nc\Eisb{\ol\Eis{}}
\nc\Eisr{\on{Eis}^{rat}{}}
\nc\wh{\widehat}
\nc{\Def}{\on{Def_{\check{\fb}}(E)}}
\nc{\barZ}{\overline{Z}{}}
\nc{\barbarZ}{\overline{\barZ}{}}
\nc{\barpi}{\overline\pi}
\nc{\barbarpi}{\overline\barpi}
\nc{\barpip}{\overline\pi{}^+}
\nc{\barpim}{\overline\pi{}^-}
\nc{\fq}{\mathfrak q}
\nc{\fqb}{\ol{\fq}{}}
\nc{\fpb}{\ol{\fp}{}}
\nc{\fpr}{{\fp^{rat}}{}}
\nc{\fqr}{{\fq^{rat}}{}}
\nc{\hattimes}{\wh\otimes}
\nc{\bh}{{\bar{h}}}
\nc{\bOmega}{{\overline{\Omega(\check \fn)}}}
\nc{\seq}[1]{\stackrel{#1}{\sim}}
\nc{\cT}{{\check{T}}}
\nc{\cG}{{\check{G}}}
\nc{\cM}{{\check{M}}}
\nc{\cB}{{\check{B}}}
\nc{\cP}{{\check{P}}}
\nc{\ct}{{\check{\mathfrak t}}}
\nc{\cg}{{\check{\fg}}}
\nc{\cb}{{\check{\fb}}}
\nc{\cn}{{\check{\fn}}}
\nc{\cp}{{\check{\fp}}}
\nc{\cm}{{\check{\fm}}}
\nc{\cLambda}{{\check\Lambda}}
\nc{\cla}{{\check\lambda}}
\nc{\cmu}{{\check\mu}}
\nc{\cnu}{{\check\nu}}
\nc{\ceta}{{\check\eta}}
\nc{\DefbE}{{\on{Def}_{\cB}(E_\cT)}}
\nc{\imathb}{{\ol{\imath}}}
\nc{\rlr}{\overset{\longrightarrow}{\underset{\longrightarrow}\longleftarrow}}
\nc{\oBun}{\overset{\circ}\Bun}
\nc{\BunBbb}{\ol{\ol{Bun}}_B}
\nc{\BunBr}{\Bun_B^{rat}}
\nc{\BunBrsg}{\Bun_B^{rat,\on{s.g.}}}
\nc{\BunBrp}{\Bun_B^{rat,polar}}
\nc{\BunBrpbg}{\Bun_B^{rat,polar,\on{b.g.}}}
\nc{\BunBrpsg}{\Bun_B^{rat,polar,\on{s.g.}}}
\nc{\BunTrp}{\Bun_T^{rat,polar}}
\nc{\BunTrpbg}{\Bun_T^{rat,polar,\on{b.g.}}}
\nc{\BunTrpsg}{\Bun_T^{rat,polar,\on{s.g.}}}
\nc{\BunNr}{\Bun_N^{rat}}
\nc{\BunNre}{\Bun_N^{enh,rat}}
\nc{\BunTr}{\Bun_T^{rat}}
\nc{\Vect}{\on{Vect}}
\nc{\Whit}{\on{Whit}}
\nc{\bTb}{\ol{\on{CT}}}
\nc{\bTr}{\on{CT}^{rat}{}}
\nc\jmathr{\jmath^{rat}{}}
\nc{\ux}{\underline{x}}
\nc{\clambda}{{\check\lambda}}
\nc{\calpha}{{\check\alpha}}
\nc{\inftyGrpd}{{\mathsf{Grpd}_\infty}}
\nc{\fset}{\mathsf{fSet}}
\nc{\LocSysG}{\LocSys_{\cG}}
\nc{\Sing}{{\on{Sing}}}
\nc{\dr}{{\on{dR}}}
\nc{\Ind}{\on{Ind}}
\nc{\Sat}{\on{Sat}}
\nc{\Ho}{\on{Ho}}
\nc{\Res}{\on{Res}}
\nc{\sotimes}{\overset{!}\otimes}
\nc{\mmod}{{\on{-}}{\mathbf{mod}}}
\nc{\Maps}{\on{Maps}}
\nc{\CMaps}{{\mathcal Maps}}
\nc{\bMaps}{{\mathbf{Maps}}}
\nc{\dgSch}{\on{DGSch}}
\nc{\dgindSch}{\on{DGindSch}}
\nc{\indSch}{\on{indSch}}
\nc{\Sch}{\mathsf{Sch}}
\nc{\affdgSch}{\on{DGSch}^{\on{aff}}}
\nc{\affSch}{\on{Sch}^{\on{aff}}}
\nc{\Groupoids}{\on{Grpd}}
\nc{\inftypic}{\infty\on{-PicGrpd}}
\nc{\inftyCat}{{\mathsf{Cat}_{\infty}}}
\nc{\MoninftyCat}{\infty\on{-Cat}^{Mon}}
\nc{\SymMoninftyCat}{\infty\on{-Cat}^{\on{SymMon}}}
\nc{\SymMonStinftyCat}{\on{DGCat}^{\on{SymMon}}}
\nc{\MonStinftyCat}{\on{DGCat}^{Mon}}
\nc{\inftystack}{\on{Stk}}
\nc{\inftystackalg}{Stk^{1\text{-}alg}}
\nc{\inftyprestack}{\on{PreStk}}
\nc{\inftydgnearstack}{\on{NearStk}}
\nc{\inftydgstack}{\on{Stk}}
\nc{\inftydgstackalg}{DGStk^{1\text{-}alg}}
\nc{\inftydgprestack}{\on{PreStk}}
\nc{\HC}{\CH\bC}
\nc{\csupp}{\supp}
\nc{\Arth}{\on{Arth}}
\nc{\ArthG}{{\on{Arth}_\cG}}
\nc{\ul}{\underline}
\nc{\Z}{\mathcal{Z}}
\nc{\calN}{\N}
\nc{\calW}{\mathcal{W}}
\nc{\calF}{\mathcal{F}}
\nc{\calH}{\mathcal{H}}
\nc{\calO}{\mathcal{O}}
\nc{\calK}{\mathcal{K}}
\nc{\Ran}{\mathsf{Ran}}
\nc{\Jets}{\on{Jets}}
\nc{\act}{\mathsf{act}}
\nc{\Av}{\mathsf{Av}}
\nc{\Ad}{\on{Ad}}
\nc{\BGRan}{BG_{\Ran}}
\nc{\colim}{\on{colim}}
\nc{\codim}{\on{codim}}
\nc{\cpt}{{\on{cpt}}}
\nc{\dR}{{\on{dR}}}
\nc{\DGCat}{\mathsf{DGCat}}
\nc{\DGCatcont}{\on{DGCat}_{cont}}
\nc{\glob}{{\on{glob}}}
\nc{\loc}{{\on{loc}}}
\renewcommand{\op}{{\on{op}}}
\nc{\pt}{{\on{pt}}}
\nc{\PreStk}{{\mathsf{PreStk}}}
\nc{\Cat}{{\mathsf{Cat}}}
\nc{\ShvCat}{{\mathsf{ShvCat}}}
\nc{\restr}[2]{\left. #1 \right |_{#2}}
\nc{\uprestr}[2]{\left. #1 \right |^{#2}}
\nc{\bLoc}{{\mathbf{Loc}}}
\nc{\bGamma}{{\mathbf{\Gamma}}}
\nc{\bLocA}{\mathbf{Loc}^\A}
\nc{\bGammaA}{\mathbf{\Gamma}^\A}
\nc{\bLocB}{\mathbf{Loc}^\B}
\nc{\bGammaB}{\mathbf{\Gamma}^\B}
\nc{\bLocH}{\mathbf{Loc}^\H}
\nc{\bGammaH}{\mathbf{\Gamma}^\H}
\nc{\gen}{\mathsf{gen}}
\nc{\hto}{\hookrightarrow}
\nc{\ext}{\mathsf{ext}}
\nc{\ev}{\mathsf{ev}}
\nc{\rat}{\mathsf{rat}}
\nc{\usotimes}[1]{\underset{#1}{\otimes}}
\nc{\ustimes}[1]{\underset{#1}{\times}}
\nc{\uscolim}[1]{\underset{#1}{\colim}}
\nc{\ch}{{\mathfrak{ch}}}
\renc{\fD}{{\Dmod}}
\nc{\fH}{{\mathfrak{H}}}
\nc{\p}{{\mathfrak{p}}}
\renc{\r}{{\mathfrak{r}}}
\nc{\xto}{\xrightarrow}
\renc{\sec}{\section}
\nc{\enh}{\mathsf{enh}}
\renc{\gen}{\mathsf{gen}}
\nc{\BunGBgen}{\Bun_G^{B-\gen}}
\nc{\BunGHgen}{\Bun_G^{H-\gen}}
\nc{\BunGNgen}{\Bun_G^{N-\gen}}
\nc{\Fun}{\mathsf{Fun}}
\nc{\End}{\mathsf{End}}
\nc{\lr}{\xymatrix{ \ar@<-0.4ex>[r] \ar@<.5ex>[l]  & } }
\nc{\rr}{\xymatrix{ \ar@<-0.2ex>[r] \ar@<.7ex>[r]  & } }
\nc{\rrr}{\xymatrix{ \ar@<.0ex>[r] \ar@<.7ex>[r] \ar@<-0.7ex>[r] & } }
\nc{\Stab}{\mathsf{Stab}}
\nc{\Orb}{\mathsf{Orb}}
\renc{\exp}{\mathit{exp}}
\renc{\q}{\mathfrak{q}}
\nc{\virg}[1]{``#1"}
\renc{\bold}[1]{\boldsymbol{#1}}
\nc{\bigt}[1]{\big( #1 \big) }
\nc{\Bigt}[1]{\Big( #1 \Big) }
\nc{\extwhit}{{\CW h}(G,\mathsf{ext})}
\nc{\footcite}{\footnote}
\nc{\GA}{{G(\AA)}}
\nc{\GO}{{G(\OO)}}
\nc{\Shv}{\mathsf{Shv}}
\nc{\inc}{\mathsf{inc}}
\nc{\Par}{\mathsf{Par}}
\renc{\i}{\mathfrak{i}}
\nc{\NA}{N(\AA)}
\nc{\VA}{V(\AA)}
\nc{\Glue}{\mathsf{Glue}}
\nc{\laxlim}{\text{laxlim}}
\nc{\FT}{\mathsf{FT}}
\nc{\out}{\mathsf{out}}
\nc{\hol}{\mathsf{hol}}
\nc{\Hol}{\on{Hol}}
\nc{\add}{\mathsf{add}}
\nc{\sto}{\rightsquigarrow}
\nc{\squigto}{\rightsquigarrow}
\nc{\fW}{\mathfrak{W}}
\nc{\vrho}{\varrho}
\nc{\counit}{\mathsf{counit}}
\nc{\unit}{\mathsf{unit}}
\nc{\corr}{\mathsf{corr}}
\nc{\Corr}{\mathsf{Corr}}
\nc{\IndSch}{\mathsf{IndSch}}
\nc{\Tate}{{\mathsf{Tate}}}
\nc{\surjto}{\twoheadrightarrow}
\renc{\j}{\mathfrak{j}}
\nc{\J}{\mathcal{J}}
\nc{\pro}{\mathsf{pro}}
\nc{\fty}{\mathsf{ft}}
\nc{\Pro}{\mathsf{Pro}}
\nc{\coact}{\mathsf{coact}}
\nc{\aff}{\mathsf{aff}}
\nc{\Nilp}{\on{Nilp}}
\nc{\Gch}{{\check{G}}}
\nc{\Pch}{{\check{P}}}
\nc{\Mch}{{\check{M}}}
\nc{\Qch}{{\check{Q}}}
\nc{\LL}{\mathbb{L}}
\nc{\LS}{{\on{LS}}}
\nc{\x}{\varkappa} 
\nc{\Otimes}{\boldsymbol{\otimes}}
\nc{\Times}{\boldsymbol{\times}}
\nc{\flip}{\text{<}}
\nc{\coeffRan}{\mathsf{coeff}^{\Ran}}
\nc{\Ha}{H(\sA)}
\nc{\Groups}{\mathsf{Groups}}
\nc{\Groth}{\mathsf{Groth}}
\nc{\rlto}{\rightleftarrows}
\nc{\DGCatRan}{\ShvCatCrys(\Ran)}
\nc{\longto}{\longrightarrow}
\renc{\Jets}{\mathsf{Jets}}
\nc{\mer}{\mathsf{mer}}
\nc{\W}{\mathcal{W}}
\nc{\Sect}{\mathsf{Sect}}
\renc{\Maps}{\mathsf{Maps}}
\renc{\bf}{\mathbf{f}}
\nc{\y}{\mathtt{y}}
\renc{\x}{\mathtt{x}}
\nc{\un}{{\it un}}
\nc{\indep}{\mathsf{indep}}
\nc{\CoAlg}{\mathsf{CoAlg}}
\nc{\coeff}{\mathsf{coeff}}
\nc{\R}{\mathcal{R}}
\renc{\hat}{\widehat}
\nc{\TK}{T(\mathsf{K})} 
\nc{\TtKK}{\Tt(\mathpzc{K})} 
\nc{\TtK}{\Tt(\mathsf{K})} 
\nc{\KK}{\mathpzc{K}}
\nc{\Dmod}{\mathfrak{D}}
\nc{\curs}[1]{\mathpzc{#1}}
\nc{\Bshv}{\bold{\B}}
\nc{\Bind}{\H_{\indep}}
\nc{\BRan}{\H_{\Ran}}
\nc{\ARan}{\A_{\Ran}}
\nc{\Aind}{\A_{\indep}}
\nc{\GrRan}{\Gr}
\nc{\Gr}{\mathsf{Gr}}
\nc{\GrGRan}{\Gr_{G}}
\nc{\GrGind}{\Gr_{G}^{\indep}}
\nc{\Grind}[1]{\Gr_{#1}^{\indep} }
\nc{\GrGdom}{\curs{Gr}_G}
\nc{\GMapsRan}[1]{\mathsf{GMaps}(X,{#1})}
\nc{\GSectRan}[1]{\mathsf{GSect}({#1}/X)}
\nc{\GMapsind}[1]{\mathsf{GMaps}(X,{#1})^\indep}
\nc{\GSectind}[1]{\mathsf{GSect}({#1}/X)^\indep}
\nc{\GMapsdom}[1]{\curs{GMaps}(X,{#1})}
\nc{\GSectdom}[1]{\curs{GSect}({#1}/X)}
\nc{\chind}{\ch^{\indep}}
\nc{\chdom}{\curs{ch}}
\nc{\QSect}[1]{\curs{QSect}(#1/X)} 
\nc{\QMaps}[1]{\curs{QMaps}(X,#1)} 
\nc{\Zar}{\mathit{Zar}}
\nc{\loccit}{\textit{loc.$\,$cit.}}
\nc{\Crys}{\on{Crys}}
\nc{\ShvCatCrys}{\ShvCat^{\Crys}}
\nc{\BPE}{{\BP E}}
\nc{\BVE}{{\BV E}}
\nc{\BBE}{{\BB E}}
\nc{\Wh}{{{\CW}h}}
\nc{\ChiralCat}{\mathsf{ChiralCat}}
\nc{\RRep}{\mathfrak{R}ep}
\nc{\SSph}{\mathfrak{S}ph}
\nc{\tto}{\twoheadrightarrow}
\nc{\disj}{{\mathsf{disj}}}
\nc{\C}{\CC}
\nc{\Tch}{{\check{T}}}
\nc{\good}{\mathsf{good}}
\nc{\triv}{\mathsf{triv}}
\nc{\Alg}{\mathsf{Alg}}
\nc{\CAlg}{\mathsf{CAlg}}
\nc{\Spread}{\mathsf{Spread}}
\nc{\Dom}{\mathsf{Dom}}
\nc{\Jac}{\on{Jac}}
\renc{\CD}[1]{{#1}^{\on{CD}}}
\nc{\String}{\on{String}}
\renc{\min}{{\mathit{min}}}
\nc{\rrep}{\on-\!\mathbf{rep}}
\nc{\WWh}{\mathfrak{W}h}
\nc{\Grpd}{\mathsf{Grpd}}
\nc{\timesdisj}{\overset{\circ}\times}
\renc{\NA}{N(\sA)}
\nc{\chiral}{\mathsf{chiral}}
\nc{\Hopf}{\mathsf{Hopf}}
\nc{\heart}{\heartsuit}
\nc{\kk}{\mathbbm{k}} 
\nc{\HHom}{\CH{om}} 
\nc{\Cone}{\on{Cone}}
\nc{\EE}{\mathbb{E}}
\renc{\HC}{{\on{HC}}}
\nc{\HH}{{\on{HH}}}
\nc{\even}{{\on{even}}}
\nc{\SingSupp}{\on{SingSupp}}
\nc{\Supp}{\on{Supp}}
\nc{\temp}{{\mathit{temp}}}
\nc{\geom}{{\mathit{geom}}}
\nc{\ren}{{\mathit{ren}}}
\nc{\naive}{{\mathit{naive}}}
\nc{\conaive}{{\mathit{conaive}}}
\nc{\spec}{\mathit{spec}}
\nc{\gch}{\mathfrak{\check{g}}}
\nc{\Hecke}{\on{Hecke}}
\nc{\LSGch}{{\LS_\Gch}}
\nc{\Hsx}[2]{\H_{{#1} \leftarrow {#2}}}
\nc{\Hdx}[2]{\H_{{#1} \to {#2}}}
\nc{\Hcorr}[3]{ \H_{{#1} \leftarrow {#2} \to {#3}} }
\nc{\Hopcorr}[3]{ \H_{{#1} \to {#2} \leftto {#3}} }
\nc{\ICohsx}[2]{\ICohW_{{#1} \leftarrow {#2}}}
\nc{\ICohdx}[2]{\ICohW_{{#1} \to {#2}}}
\nc{\ICohcorr}[3]{ \ICohW_{{#1} \leftarrow {#2} \to {#3}} }
\nc{\ICohopcorr}[3]{ \ICohW_{{#1} \to {#2} \leftto {#3}} }
\nc{\QCohsx}[2]{\QCohW_{{#1} \leftarrow {#2}}}
\nc{\QCohdx}[2]{\QCohW_{{#1} \to {#2}}}
\nc{\QCohcorr}[3]{ \QCohW_{{#1} \leftarrow {#2} \to {#3}} }
\nc{\QCohopcorr}[3]{ \QCohW_{{#1} \to {#2} \leftto {#3}} }
\renc{\AA}{\bbA}
\nc{\Asx}[2]{\AA_{{#1} \leftarrow {#2}}}
\nc{\Adx}[2]{\AA_{{#1} \to {#2}}}
\nc{\Acorr}[3]{ \AA_{{#1} \leftarrow {#2} \to {#3}} }
\nc{\Aopcorr}[3]{ \AA_{{#1} \to {#2} \leftto {#3}} }
\nc{\Bsx}[2]{\B_{{#1} \leftarrow {#2}}}
\nc{\Bdx}[2]{\B_{{#1} \to {#2}}}
\nc{\Bcorr}[3]{ \B_{{#1} \leftarrow {#2} \to {#3}} }
\nc{\Bopcorr}[3]{ \B_{{#1} \to {#2} \leftto {#3}} }
\nc{\ICohzero}[3]{\ICoh_0 \bigt{#1 \times_{{#2}_\dR} #3}}
\nc{\IndCohzero}{\ICohzero}
\nc{\form}[3]{#1 \times_{{#2}_\dR} #3 }
\nc{\ind}{{\mathsf{ind}}}
\nc{\oblv}{{\mathsf{oblv}}}
\nc{\Aff}{\mathsf{Aff}}
\nc{\dgAff}{\Aff}
\nc{\deloop}{\mathsf{deloop}}
\renc{\loop}{\mathsf{loop}}
\nc{\coev}{\mathsf{coev}}
\nc{\bE}{\mathbf{E}}
\nc{\ShvCatH}{{\ShvCat^{\bbH}}}
\nc{\ShvCatQW}{\ShvCat^{\QCohW}}
\nc{\bbimod}{\on{-}\mathbf{bimod}}
\nc{\Tw}{\mathsf{Tw}}
\nc{\Arr}{\mathsf{Arr}}
\nc{\bDelta}{\bold\Delta}
\nc{\BiCat}{\mathsf{BiCat}}
\nc{\Seg}{\mathsf{Seg}}
\nc{\Cart}{\mathsf{Cart}}
\nc{\Bimod}{\mathsf{Bimod}}
\nc{\lax}{\mathit{lax}}
\nc{\pr}{\mathsf{pr}}
\nc{\zero}{ \{ 0 \}   }
\nc{\Perf}{\on{Perf}}
\nc{\leftto}{\leftarrow}
\nc{\lto}{\leftto}
\nc{\xlto}[1]{\xleftarrow{#1}}
\nc{\ltemp}{{}^\temp}
\nc{\TwCorr}{\mathsf{TwCorr}}
\nc{\Affover}[1]{{\Aff_{/#1}}}
\nc{\Affoverop}[1]{{( \Affover{#1})^\op}}
\nc{\AffOver}[2]{{(\Aff_{#1})_{/#2}}}
\nc{\AffOverop}[2]{{( \AffOver{#1}{#2})^\op}}
\nc{\aft}{{\mathit{aft}}}
\renc{\vert}{{\mathit{vert}}}
\nc{\horiz}{{\mathit{horiz}}}
\nc{\type}{{\mathit{type}}}
\nc{\adm}{{\mathit{adm}}}
\nc{\g}{\mathfrak{g}}
\nc{\free}{\mathsf{free}}
\nc{\Sform}{{S \times_{S_\dR} S}}
\nc{\Yform}{{\Y \times_{\Y_\dR} \Y}}
\nc{\SdR}{ {S_{\dR}}}
\nc{\laft}{{\mathit{laft}}}
\nc{\Affevcocaft}{\Aff_{\aft}^{< \infty}}
\nc{\Affaftevcoc}{\Aff_{\aft}^{< \infty}}
\nc{\Affevcoclfp}{\Aff_{\lfp}^{< \infty}}
\nc{\Schevcoclfp }{\Sch_{\lfp}^{< \infty}}
\nc{\Schevcocaft}{\Sch_{\aft}^{< \infty}}
\nc{\Schaftevcoc}{\Sch_{\aft}^{< \infty}}
\nc{\Stkevcoc}{\Stk^{< \infty}}
\nc{\Stkevcoclfp}{\Stk_{\lfp}^{< \infty}}
\nc{\Stkperfevcoclfp}{\Stk_{\mathit{perf},\lfp}^{< \infty}}
\nc{\Stkperfevcoc}{\Stk_{\mathit{perf}}^{< \infty}}
\nc{\Stkperflfp}{\Stk_{\mathit{perf},\lfp}}
\nc{\Stkperf}{\Stk_{\mathit{perf}}}
\nc{\Stklfp}{\Stk_{\lfp}}
\nc{\evcoc}{\mathit{bdd}}
\nc{\ICoh}{\IndCoh}
\nc{\citep}{\cite}
\renc{\H}{\bbH}
\nc{\uno}{\mathbbm{1}}
\nc{\Tang}{\mathbb{T}}
\nc{\LieAlg}{\mathsf{LieAlg}}
\nc{\Serre}{{\on{Serre}}}
\nc{\MPreStk}{\mathsf{MPreStk}}
\nc{\all}{{\on{all}}}
\nc{\QCohwedge}{\bbQ^\wedge}
\nc{\ICohwedge}{\bbI^\wedge}
\nc{\ICohW}{\ICohwedge}
\nc{\QCohW}{\QCohwedge}
\nc{\ShvCatA}{\ShvCat^{\AA}}
\nc{\ShvCatB}{{\ShvCat^\B}}
\nc{\naiveto}{{\xto{\naive}}}
\nc{\conaiveto}{{\xto{\conaive}}}
\nc{\strong}{\mathit{strong}}
\nc{\costrong}{\mathit{costrong}}
\nc{\conv}{\mathit{conv}}
\nc{\Q}{\bbQ}
\nc{\bY}{\mathbf{Y}}
\nc{\Loop}{\mathsf{LOOP}}
\nc{\DG}{{\on{DG}}}
\nc{\coind}{\mathsf{coind}}
\nc{\co}{\on{co}}
\nc{\laftdef}{{\mathit{laft-def}}}
\nc{\qsmooth}{{\mathit{qs.smooth}}}
\nc{\smooth}{{\mathit{smooth}}}
\nc{\LKE}{\on{LKE}}
\nc{\RKE}{\on{RKE}}
\nc{\ShvCatAco}{\ShvCatA_{\co}}
\nc{\ShvCatHco}{\ShvCatH_{\co}}
\nc{\Stk}{\mathsf{Stk}}
\nc{\doubleCat}{\mathsf{doubleCat}}
\nc{\Spaces}{\mathcal{S}\!\mathit{paces}}
\nc{\ALG}{\mathsf{ALG}}
\nc{\MAPS}{\mathsf{MAPS}}
\nc{\CAT}{\mathsf{CAT}}
\nc{\oneCat}{{\Cat_{\1}}}
\nc{\oneCAT}{{\CAT_{\1}}}
\nc{\twoCat}{{\Cat_{\2}}}
\nc{\twoCAT}{{\CAT_{\2}}}
\nc{\DGCAT}{\mathsf{DGCAT}}
\nc{\twoCatDG}{{\CAT_{\2}^\DG}}
\nc{\twoCATDG}{{\CAT_{\2}^\DG}}
\nc{\twoCATDGw}{{\CAT_{\2, w*}^\DG}}
\nc{\twoCATDGww}{{\CAT_{\2, ww*}^\DG}}
\nc{\AlgBimod}{\Alg^{\mathit{bimod}}}
\nc{\AlgBimodDGCat}{\AlgBimod(\DGCat)}
\nc{\ALGBimod}{\ALG^{\mathit{bimod}}}
\nc{\twoAlgBimod}{\ALGBimod}
\nc{\rev}{{\on{rev}}}
\nc{\lfp}{{\mathit{lfp}}}
\nc{\RBeck}{{\on{R-BC}}}
\nc{\LBeck}{{\on{L-BC}}}
\nc{\schem}{\mathit{schem}}
\nc{\proper}{\mathit{proper}}
\nc{\res}{{\mathit{res}}}
\nc{\UQCoh}{\U^{\QCoh}}
\nc{\UQ}{\UQCoh}
\nc{\LieAlgbd}{{\on{Lie-algbd}}}
\nc{\LY}{{L\Y}}
\nc{\TangQ}{\Tang^{\QCoh}}
\nc{\Fil}{{\on{Fil}}}
\nc{\AssGr}{\on{assoc-gr}}
\nc{\red}{{\mathit{red}}}
\nc{\Cech}{\on{Cech}}
\nc{\FormMod}{\mathsf{FormMod}}
\nc{\FormModunderStkevcoc} {\FormMod_{\Stk^{< \infty}/}^\lfp }
\nc{\WW}{{\on{Weyl}}}
\nc{\ULA}{\on{ULA}}
\nc{\vDmod}{\Dmod^{\on{der}}}
\nc{\Tr}{\T{r}}
\nc{\primef}{'\! f}
\nc{\primeDelta}{'\! \Delta}
\nc{\niliso}{\mathit{nil \on- iso}}
\nc{\cart}{\mathit{cart}}
\begin{document}

\onehalfspacing


\title{The center of the categorified ring of differential operators}
\author{Dario Beraldo}




\begin{abstract}
Let $\Y$ be a derived algebraic stack satisfying some mild conditions.
The purpose of this paper is three-fold. First, we introduce and study $\H(\Y)$, a monoidal DG category that might be regarded as a categorification of the ring of differential operators on $\Y$. 
When $\Y = \LS_G$ is the derived stack of $G$-local systems on a smooth projective curve, we expect $\H(\LS_G)$ to act on both sides of the geometric Langlands correspondence, compatibly with the conjectural Langlands functor.
Second, we construct a novel theory of D-modules on derived algebraic stacks. Contrarily to usual D-modules, this new theory, to be denoted by $\vDmod$, is sensitive to the derived structure.
Third, we identify the Drinfeld center of $\H(\Y)$ with $\vDmod(\LY)$, the DG category of $\vDmod$-modules on the loop stack $\LY := \Y \times_{\Y \times \Y} \Y$. 
\end{abstract}

\maketitle

\tableofcontents

\section{Introduction}

\ssec{Overview} \label{ssec: intro to intro}

\sssec{}

Let $\kk$ be a ground field of characteristic zero and $Y = \Spec(A)$ a global complete intersection scheme over $\kk$, for instance the zero locus of a dominant map $\A_{\kk}^n \to \A_{\kk}^1$. 
This is a scheme with very mild singularities (if at all): it is the simplest example of a quasi-smooth scheme.

\medskip

A natural invariant of $Y$ is the differential graded (DG) algebra $\HC(A) := \on{RHom}_{A \otimes_{\kk} A}(A, A)$ of \emph{Hochschild cochains} of $A$. As explained later, the DG category of DG modules over $\HC(A)$, denoted $\H(Y)$ in the sequel, carries a monoidal structure. This monoidal DG category is important in the theory of singular support of coherent sheaves on $Y$; see \cite{AG, BIK} for the notion of singular support and \cite{shvcatHH, strong-gluing, chiral-hom} for the role of $\H(Y)$.

\sssec{}

Now, given any monoidal DG category $\CA$, it is natural to try to compute its Drinfeld center $\Z(\CA)$. In our situation, we show that the Drinfeld center of $\H(Y)$ is equivalent to the DG category of $\fD$-modules on $LY := Y \times_{Y \times_\kk Y} Y$, with the following two complications.

\medskip

The first complication is that the fiber product computing $LY$ must be taken in the derived sense. (In fact, the underived truncation of $LY$ is isomorphic to $Y$.) Practically, this means that in the formula $LY \simeq \Spec(A \otimes_{A \otimes_\kk A} A)$ the tensor product over $A \otimes_\kk A$ must be derived. In the cases of interest, $Y$ is (quasi-smooth but) not smooth. Then $LY$ is extremely derived: technically speaking, $LY$ is unbounded, which means that its DG algebra of functions $A \otimes_{A \otimes_\kk A} A$ (the DG algebra of \emph{Hochschild chains} of $A$) has \emph{infinitely many} nonzero cohomology groups. 

\medskip

The second complication is that the theory of $\fD$-modules as understood up to now (e.g. in \cite{Crys} and in \cite[Volume 2, Chapter 4]{Book}) is insufficient to deal with unbounded derived schemes. To fix this, we introduce a new (and yet natural) theory of $\fD$-modules, denoted by $\vDmod$, which makes the following statement true:
$$
\Z(\H(Y)) \simeq \vDmod(LY).
$$

\ssec{The main results}

We now proceed to explain the above situation more organically and in higher generality.

\sssec{}

As above, let $\kk$ be a ground field of characteristic zero, fixed throughout; undecorated products and tensor products are always taken over $\kk$. 
Let $\Y$ be a derived stack (over $\kk$) satisfying some conditions to be spelled out later. A good example to have in mind is $\Y = Y$ a \emph{quasi-smooth} scheme: for instance, a global complete intersection as above.

\medskip

The first task of this paper is to introduce a monoidal DG category $\H(\Y)$, which should be thought of as a \emph{categorification of the ring of differential operators on $\Y$}. 
The definition of $\H(\Y)$ is given in Section \ref{ssec:intro-defin of H}.
The second task is to compute the Drinfeld center of $\H(\Y)$. Surprisingly, we will discover a new notion of $\fD$-modules on derived stacks, to be denoted by $\vDmod$, which is much more natural than the usual notion. We refer to \cite{Crys} for the usual notion, while the definition of the new notion is the third (and last) task of this paper, performed in Section \ref{sec:beyond-ev-coc} and especially in Section \ref{ssec: derived D-modules}.

\sssec{}

The following example illuminates the difference between $\Dmod$ and $\vDmod$, showing that the latter is more natural. Again, we refer to Section \ref{ssec: derived D-modules}. (We should remark however that the discussion of $\vDmod$ in this paper is by no means complete: we limit ourselves to the definition and to the few properties needed to prove Theorem \ref{main-thm-intro} below. For future applications, a proper foundational treatment is necessary; we plan to address it elsewhere.)

\begin{example} \label{example: intro Weyl algebras}
Let $X = \Spec A$ be an affine DG scheme with $A= (\kk[x_i], d)$ quasi-free. Then we expect $\vDmod(X) \simeq W_A \mod$, where $W_A := (\kk \langle x_i, \partial_i \rangle, d)$ is the Weyl DG algebra constructed in the obvious manner from $A$. On the other hand, $\Dmod(X) \simeq W_B \mod$, where $B$ is the quasi-free DG algebra obtained from $A$ by discarding all variables of degree strictly less than $-1$.
\end{example}

Our main result, which appears in the main body of the paper as Theorem \ref{thm:Main-first-part}, goes as follows:

\begin{mainthm} \label{main-thm-intro}
Let $\Y$ be a quasi-compact derived algebraic stack that is perfect, bounded and with perfect cotangent complex.\footnote{see Section \ref{sssec:stacks reminders} for our conventions on algebraic stacks and the meaning of the adjectives} 
Then the Drinfeld center of the monoidal DG category $\H(\Y)$ is naturally equivalent to $\vDmod(\LY)$, where $\LY := \Y \times_{\Y \times \Y} \Y$ is the loop stack of $\Y$.
\end{mainthm}

\sssec{}

\nc{\cl}{{\on{cl}}}

By its very construction (see \cite{Crys}), the DG category $\Dmod(\X)$ is not sensitive to the derived structure of $\X$. Indeed, one defines $\Dmod(\X) := \QCoh(\X_\dR)$, where $\X_\dR$ is the de Rham prestack of $\X$; in turn, $\X_\dR$ is canonically isomorphic to the de Rham prestack of the classical truncation of $\X$.

On the other hand, $\vDmod(\X)$ is sensitive to the derived structure of $\X$ in general. More precisely: we will construct a canonical functor $\Upsilon^{\fD}_{\X/\pt}: \vDmod(\X) \to \Dmod(\X)$ that is an equivalence when $\X$ is \emph{bounded} but not in general. If fact, $\Upsilon^{\fD}_{\X/\pt}$ fails to be an equivalence even for the simplest unbounded affine scheme $\X$, see Proposition \ref{cor:vDmod on Yn}.

Let us recall that a derived stack is said to be \emph{bounded}\footnote{alias: \emph{eventually coconnective}} if it is smooth-locally of the form $\Spec (A)$, where $A$ is a cohomologically bounded commutative DG algebra.  A stack is said to be \emph{unbounded} if it is not bounded.

\begin{example}
Let $\Y$ be smooth, so that $\LY$ is quasi-smooth.\footnote{Recall that a derived stack is quasi-smooth if it is smooth-locally isomorphic to the derived zero locus of a map $\A^m \to \A^n$. (Equivalently, if the cotangent complex is perfect of Tor amplitude $[-1,1]$.)}
 Since quasi-smooth stacks are easily seen to be bounded, we have $\vDmod(\LY) \simeq \Dmod(\LY)$ canonically.
If moreover $\Y = Y$ is a (smooth) scheme, then $\Dmod(LY) \simeq \Dmod(Y)$. Combining these observations, the theorem states that, for $Y$ a smooth scheme, the center of $\H(Y)$ is equivalent to $\Dmod(Y)$. This assertion can be regarded as a categorification of the classical theorem that relates the Hochschild (co)homology of the ring of differential operators on $Y$ to the de Rham cohomology of $Y$, see e.g. \cite{Bry-Get}.
\end{example}

\sssec{}

For $\Y$ quasi-smooth but not smooth, one checks that $\LY$ is unbounded (see below for the simplest example). This is the easiest situation for which the full content of Theorem \ref{main-thm-intro} can be appreciated and it is, after all, our main case of interest. For instance, in the next section we will consider the quasi-smooth stack $\Y = \LS_G$ that parametrizes $G$-local systems over a smooth complete curve.

\begin{example} \label{example:loops}
Suppose $Y = \Spec(\kk[\epsilon])$, with $\epsilon$ in cohomological degree $-1$. This is arguably the simplest truly derived affine scheme. It is quasi-smooth, but not smooth. By \eqref{eqn:LG as a product}, its loop scheme $LY$ is isomorphic to the spectrum of the graded algebra $\kk[\epsilon, \eta]$, where $\epsilon$ is as before and $\eta$ is  a variable in (cohomological) degree $-2$. It follows that $LY$ is unbounded. 
\end{example}

\ssec{The DG category $\H(\Y)$ and its relatives} \label{ssec:intro-defin of H}

The origin of (relatives of) $\H(\Y)$ can be traced back to the \emph{spectral gluing theorem} occurring in geometric Langlands, where the categories 
$ \ICoh_0((\LS_G)^\wedge_{\LS_P})$
play a crucial role, see \cite{AG2}.

\sssec{} \label{sssec:notation}

Let us explain the notations:
\begin{itemize}
\item
$G$ denotes a connected reductive group over $\kk$;
\item
$P$ is one of its parabolic subgroups;
\item
$\LS_P$ (resp., $\LS_G$) denotes the \emph{quasi-smooth} stack of de Rham $P$-local systems (resp., $G$-local systems) on a smooth complete $\kk$-curve $X$;
\item
the map $\LS_P \to \LS_G$ used to construct the formal completion is the natural one, induced by the inclusion $P \subseteq G$.
\end{itemize}
Finally, and most importantly, the definition of the DG category $ \ICoh_0((\LS_G)^\wedge_{\LS_P})$ is an instance of the following general construction, applied to the above map $\LS_P \to \LS_G$.

\sssec{}

Before explaining the construction, let us point the reader to Section \ref{ssec:conventions} for our conventions regarding DG categories and higher category theory. The conventions regarding derived algebraic geometry, prestacks, formal completions and ind-coherent sheaves are explained in Section \ref{sec:IndCoh}.

\begin{defin}
Let $f: \Y \to \Z$ be a map of perfect algebraic stacks with bounded $\Y$. Recall that any quasi-smooth stack (e.g., $\LS_P$) is bounded.
Then we define the DG category $\ICoh_0(\Z^\wedge_\Y)$ as the fiber product
\begin{equation} \label{eqn:original def of ICOHzero}
\ICoh_0(\Z^\wedge_\Y)
:=
\ICoh(\Z^\wedge_\Y) \ustimes{\ICoh(\Y)}
\QCoh(\Y),
\end{equation}
where the functor $\ICoh(\Z^\wedge_\Y) \to \ICoh(\Y)$ is the pull-back along the natural map $\primef: \Y \to \Z^\wedge_\Y$, while the functor $\QCoh(\Y) \to \ICoh(\Y)$ is the standard inclusion $\Upsilon_\Y$, see \cite{ICoh}. The construction of $\ICoh_0$ will treated in more detail in Section \ref{ssec:Defn-ICohzero}.

\end{defin}

\begin{rem}
The reader might have noticed an abuse of notation here: the definition of $\ICoh_0(\Z^\wedge_\Y)$ really depends on the map $\Y \to \Z^\wedge_\Y$, not just on $\Z^\wedge_\Y$. We hope that no confusion will ever arise and refer to Section \ref{sssec:warning-abuse-notation} for further discussion.
\end{rem}

\begin{example}
For $f = \id_\Y$, we have $\ICoh_0(\Y^\wedge_\Y) \simeq \QCoh(\Y)$ tautologically.
\end{example}

\begin{example} \label{example: D-mods as Icohzero -intro}
For $f: \Y \to \pt := \Spec(\kk)$ the structure map, we have 
\begin{equation} \label{eqn:example of ICohzero as Dmodules}
\ICoh_0(\pt^\wedge_\Y) \simeq \Dmod(\Y).
\end{equation}
To see this, recall first from \cite{Crys} that $\Dmod(\Y)$ is defined either as $\QCoh(\Y_\dR)$ or as $\ICoh(\Y_\dR)$, where $\Y_\dR \simeq \pt^\wedge_\Y$. Also, thanks to \cite[Proposition 2.4.4]{Crys}, we know that $\Upsilon_{\Y_\dR} : \QCoh(\Y_\dR) \to \ICoh(\Y_\dR)$ is an equivalence, the equivalence between left and right $\fD$-modules. Then the equivalence \eqref{eqn:example of ICohzero as Dmodules} follows from the fact that $\Upsilon$ intertwines $*$-pullbacks on the $\QCoh$ side with $!$-pullbacks on the $\ICoh$ side.
 In general, $\ICoh_0(\Z^\wedge_\Y)$ is the \virg{correct} way to define the DG category of \emph{relative left} $\fD$-modules with respect to $\Y \to \Z$.
\end{example}

\begin{rem}
For  bounded $\Y$, the inclusion $\Upsilon_\Y$ admits a continuous right adjoint, denoted by $\Phi_\Y$ in this paper.
\end{rem}

\sssec{}

Without any extra assumptions on the map $f: \Y \to \Z$, it is very difficult to get a handle of $\ICoh_0(\Z^\wedge_\Y)$: for example, it is not clear whether this DG category is compactly generated and it is very difficult to exhibit compact objects.
As discussed in Proposition \ref{prop: monadic adjunction ICoh} and Section \ref{sssec: monade adjunction per intro}, the situation simplifies as soon as we restrict to perfect stacks that are \emph{locally of finite presentation} (lfp), that is, with \emph{perfect cotangent complex}.\footnote{Actually, what we really need is that the relative cotangent complex $\Tang_{\Y/\Z}$ be perfect.} 
Note that any quasi-smooth stack tautologically has this property.
This condition guarantees that each $\ICoh_0(\Z^\wedge_\Y)$ has pleasant features: it is compactly generated, self-dual and equipped with a monadic adjunction
\begin{equation} 	\label{eqn:intro-monadic-adjunction}
\begin{tikzpicture}[scale=1.5]
\node (a) at (0,1) {$\QCoh(\Y)$};
\node (b) at (4,1) {$\ICoh_0(\Z^\wedge_{\Y})$,};
\path[->,font=\scriptsize,>=angle 90]
([yshift= 1.5pt]a.east) edge node[above] {$ (\primef)_*^\ICoh \circ \Upsilon_\Y $} ([yshift= 1.5pt]b.west);
\path[->,font=\scriptsize,>=angle 90]
([yshift= -1.5pt]b.west) edge node[below] {$ \Phi_\Y \circ (\primef)^! $} ([yshift= -1.5pt]a.east);
\end{tikzpicture}
\end{equation}
so that 
\begin{equation} \label{eqn:def of ICOHzero with universal enveloping alg}
\ICoh_0(\Z^\wedge_{\Y}) \simeq (\Phi_\Y \circ \U(\Tang_{\Y/\Z}) \circ \Upsilon_\Y) \mod(\QCoh(\Y)).
\end{equation}
Here, $\U(\Tang_{\Y/\Z})$ is the \emph{universal envelope} of the Lie algebroid $\Tang_{\Y/\Z} \to \Tang_{\Y}$, which is, \emph{by definition}, the monad $(\primef)^! \circ (\primef)_*^\ICoh$ acting on $\ICoh(\Y)$, see \cite[Volume 2, Chapter 8, Section 4.2]{Book}.

\begin{rem}
In this paper, we use the notation $\UQ(\Tang_{\Y/\Z})$ for the monad $\Phi_\Y \circ \U(\Tang_{\Y/\Z}) \circ \Upsilon_\Y$. Note that parallel between \eqref{eqn:def of ICOHzero with universal enveloping alg} and the formula
$$
\ICoh(\Z^\wedge_{\Y}) \simeq \U(\Tang_{\Y/\Z}) \mod(\ICoh(\Y))
$$
featuring in \eqref{eqn: U of tang for intro}. The tangent complex appears in this context as a consequence of (a variant of) the equivalence between formal moduli problems and DG Lie algebras. For the latter equivalence, see \cite{DAG-X} and references therein.

\end{rem}

\sssec{}

The assignment
$$
[\Y \to \Z] \squigto 
\ICoh_0(\Z^\wedge_\Y)
$$
enjoys functorialities of two kinds: 
\begin{itemize}
\item
$(\infty, 1)$-categorical functorialities, where we consider $\ICoh_0(\Z^\wedge_\Y)$ as a mere DG category;

\smallskip 
\item
$(\infty, 2)$-categorical functorialities, where we consider $\ICoh_0(\Z^\wedge_\Y)$ as a left module category for $\H(\Z)$, see below.
\end{itemize} 
In this paper we treat the first item, leaving the second item to \cite{shvcatHH}.
However, we are tacitly preparing ourselves for the $(\infty, 2)$-categorical part of the theory, as we will be very much concerned with the study of the \emph{monoidal} DG category
$$
\H(\Y) 
:=
\ICoh_0((\Y \times \Y)^\wedge_\Y).
$$
The monoidal structure on $\H(\Y)$ is the one given by convolution, inherited by the standard convolution structure on $\ICoh((\Y \times \Y)^\wedge_\Y)$. Since $\Y$ is perfect, $\H(\Y)$ is compactly generated and \emph{rigid}\footnote{See Section \ref{sssec:rigidity conventions} or \cite[Section 6.1]{DGCat} for the meaning of the adjective \virg{rigid}.}. By contrast, $\ICoh((\Y \times \Y)^\wedge_\Y)$ is not rigid in general (although it is compactly generated).

\begin{rem}

The adjuction (\ref{eqn:intro-monadic-adjunction}) yields in this case a \emph{monoidal} functor 
$$
\Delta_{*,0} := (\primeDelta)_*^\ICoh \circ \Upsilon_\Y:
\QCoh(\Y) \longto \H(\Y),
$$
with continuous conservative right adjoint. In particular, $\H(\Y)$ is the DG category of modules for the inertia Lie algebra $\Tang^\QCoh_\Y[-1] := \Phi_\Y(\Tang_\Y [-1])$ in $\QCoh(\Y)$.

\end{rem}

\begin{rem} \label{rem:HC}
In the case $\Y=S$ is an affine derived scheme, $\H(S)$ is the monoidal category of modules over $\HC(S)$, the $E_2$-algebra of \emph{Hochschild cochains} of $S$. This point of view drives the study of $\H(\Y)$ carried out in \cite{shvcatHH}.
\end{rem}

\ssec{Module categories over $\H(\Y)$}

Having the monoidal DG category $\H(\Y)$ at our disposal, it is natural to search for interesting examples of module categories, that is, to look for objects of the $\infty$-category $\H(\Y) \mmod$.

\sssec{}

Objects of $\H(\Y) \mmod$ might be regarded as \virg{categorified left $\fD$-modules on $\Y$}, in the same way as objects of $\QCoh(\Y) \mmod$ might be regarded as \virg{categorified quasi-coherent sheaves on $\Y$}.
Among the various ways to justify the validity of this point of view, we mention the following. 
One can equip the $\infty$-category $\H(\Y) \mmod$ with a symmetric monoidal structure, with unit $\QCoh(\Y)$. Then, as shown in \cite{shvcatHH}, we have
$$
\Fun_{\H(\Y) \mmod}(\QCoh(\Y), \QCoh(\Y))
\simeq
\Dmod(\Y),
$$
so that $\H(\Y) \mmod$ is a \emph{delooping} of $\Dmod(\Y)$. In the formula above, the LHS denotes the DG category of $\H(\Y)$-linear continuous endofunctors of $\QCoh(\Y)$.

\sssec{} \label{sssec:analogy-with-differential-operators}

Pushing the analogy further, we may think of the monoidal DG category $\H(\Y)$  as a categorification of the ring of differential operators on $\Y$. Likewise, $\QCoh(\Y)$ corresponds to the left $\fD$-module $\O_\Y$ and the monoidal functor $\Delta_{*,0}$ corresponds to the algebra map from functions to differential operators.

\sssec{}

There are plenty of examples of DG categories carrying a natural action of $\H(\Y)$: 
\begin{itemize}
\item
it is easy to see that $\H(\Y)$ acts by convolution on $\ICoh(\Y)$, on $\QCoh(\Y)$ and on the category of singularities $\ICoh(\Y)/\QCoh(\Y)$;
\item
more generally, for $\X \to \Y$ a map of stacks as above, $\H(\Y)$ acts on $\ICoh_0(\Y^\wedge_\X)$ and on $\ICoh(\Y^\wedge_\X)$, again by convolution;
\item
If $\Y$ is quasi-smooth, the $\H(\Y)$-action on $\ICoh(\Y)$ preserves any subcategory of $\ICoh(\Y)$ cut out by a singular support condition, see \cite{AG}.
\end{itemize}

\sssec{Digression on \virg{geometric Langlands}} \label{sssec:H-action-on-Dmod(BunG)}

A much less trivial example of an $\H(\Y)$-action is given by the following. Referring to Section \ref{sssec:notation} for the notation, let $\Bun_G$ denote the stack of $G$-bundles on $X$ and by $\Gch$ the Langlands dual group of $G$.

We claim that $\H(\LSGch)$ acts on $\Dmod(\Bun_G)$ via \emph{Hecke operators}. This action and the explanation of the terminology, i.e. the connection with \emph{derived Satake}, is addressed in \cite[Section 1.4]{shvcatHH}.
Here we just mention that the datum of such action proves almost immediately the conjecture about \emph{tempered} $\fD$-modules formulated in \cite[Section 12]{AG}.

\ssec{The center of the monoidal DG category $\H(\Y)$}

Let us come back to the contents of the present paper. After having studied the functoriality of the assignment $\ICoh_0$, we turn to the computation of the Drinfeld center $\Z(\H(\Y))$ of $\H(\Y)$.
By definition, $\Z(\H(\Y))$ is the DG category
$$
\Z(\H(\Y)):=
\Fun_{(\H(\Y),\H(\Y))\bimod}(\H(\Y), \H(\Y)).
$$
In the analogy of Section \ref{sssec:analogy-with-differential-operators}, one may suggest that $\Z(\H(\Y))$ is the categorifcation of the center of the ring of differential operators on $\Y$.

\begin{rem}
As pointed out later in Remark \ref{rem:trace = center}, it  turns out that the DG category underlying $\Z(\H(\Y))$ is canonically equivalent to the \emph{trace} of $\H(\Y)$, namely the DG category defined by
$$
\Tr(\H(\Y))
:=
\H(\Y)
\usotimes{\H(\Y) \otimes \H(\Y)^\rev} \H(\Y).
$$
\end{rem}

\sssec{}

We believe that the computation of $\Z(\H(\Y))$ is interesting in its own right. However, we were brought to it by the need to make sure that the monoidal functor
$$
\Dmod(\Y) \xto{\oblv_L} \QCoh(\Y) \xto{\Delta_{*,0}} \H(\Y)
$$
factors through $\Z(\H(\Y))$.
In other words, we wanted to construct a functor 
$$
\zeta: \Dmod(\Y)
\longto
\Z(\H(\Y))
$$
making the following diagram commutative:
\begin{equation} \label{diag:Dmod Y mapping to center}
\begin{tikzpicture}[scale=1.5]
\node (00) at (0,0) {$\Dmod(\Y)$};
\node (10) at (2.8,0) {$\QCoh(\Y).$};
\node (01) at (0,1.2) {$\Z(\H(\Y))$};
\node (11) at (2.8,1.2) {$\H(\Y)$};
\path[->,font=\scriptsize,>=angle 90]
(00.east) edge node[above] {$\oblv_L$} (10.west);
\path[->,font=\scriptsize,>=angle 90]
(01.east) edge node[above] {$\ev$} (11.west);
\path[<-, dashed,font=\scriptsize,>=angle 90]
(01.south) edge node[right] {$\zeta$} (00.north);
\path[<-,font=\scriptsize,>=angle 90]
(11.south) edge node[right] {$\Delta_{*,0}$} (10.north);
\end{tikzpicture}
\end{equation}
Here, the functor $\oblv_L$ is the \virg{left forgetful} functor from $\fD$-modules to quasi-coherent sheaves, see \cite{Crys}, while $\ev$ is the tautological functor that \virg{forgets the central structure}, that is, the evaluation functor
\begin{equation} \label{eqn:oblv from Z to H}
\Z(\H(\Y))
=
\Fun_{(\H(\Y),\H(\Y))\bimod}(\H(\Y), \H(\Y))
\xto{\phi \squigto \phi(1_{\H(\Y)})}
\H(\Y).
\end{equation}

\sssec{Digression on geometric Langlands, again}

The functor $\zeta: \Dmod(\Y) \to \Z(\H(\Y))$ will be important in future applications, which come after having constructed the action of $\H(\LSGch)$ on $\Dmod(\Bun_G)$ mentioned in Section \ref{sssec:H-action-on-Dmod(BunG)}. Indeed, the datum of such action yields in particular a monoidal functor
\begin{equation} \label{eqn:from-Dmod-to-Endofunctors-of-Dmod(BunG)}
\Dmod(\LSGch)
\longto
\Fun(\Dmod(\Bun_G), \Dmod(\Bun_G)),
\end{equation}
defined as the composition
$$
\Dmod(\LSGch)
\xto{\oblv_L}
\QCoh(\LSGch)
\xto{\Delta_{*,0}}
\H(\LSGch)
\xto{\; \act \; }
\Fun(\Dmod(\Bun_G), \Dmod(\Bun_G)).
$$
Now, the commutative diagram (\ref{diag:Dmod Y mapping to center}) guarantees that (\ref{eqn:from-Dmod-to-Endofunctors-of-Dmod(BunG)}) factors through a monoidal arrow
$$
\Dmod(\LSGch)
\longto
\Fun_{\H(\LSGch)}(\Dmod(\Bun_G), \Dmod(\Bun_G)).
$$
I.e., objects of $\Dmod(\LSGch)$ give rise to endofunctors of $\Dmod(\Bun_G)$ that commute with the Hecke operators. Note, by contrast, that endofunctors of $\Dmod(\Bun_G)$ defined by objects of $\QCoh(\LSGch)$ \emph{do not} commute with the Hecke operators in general.

\sssec{} \label{sssec:D-mods map to center}

At a heuristic level, the existence of the dashed arrow in (\ref{diag:Dmod Y mapping to center}) is clear. Indeed, for $Q \in \QCoh(\Y)$ and $\F \in \H(\Y)$, we have
$$
\Delta_{*,0}(Q) \star \F 
\simeq
(\wh p_1)^! (\Upsilon_\Y Q) \stackrel ! \otimes \F,
\hspace{.4cm}
\F \star \Delta_{*,0}(Q)
\simeq
(\wh p_2)^! (\Upsilon_\Y Q) \stackrel ! \otimes \F,
$$ 
where $\star$ denotes the monoidal structure of $\H(\Y)$ and
$$
\wh p_1, \wh p_2: \Yform \rr \Y
$$
are the two projections forming the infinitesimal groupoid of $\Y$. Hence, a \virg{homotopically coherent} identification 
$$
(\wh p_1)^! (\Upsilon_\Y Q) \simeq (\wh p_2)^! (\Upsilon_\Y Q),
$$
that is, a left crystal structure on $Q$, promotes $\Delta_{*,0}(Q)$ to an object of the center of $\H(\Y)$.

\sssec{}

Rather than turning this argument into a proof, we will first compute the full center $\Z(\H(\Y))$ in geometric terms and then exhibit the natural map $\zeta$ from $\Dmod(\Y)$. In this paper we only perform the former task, leaving the latter to a sequel. Let us however anticipate that $\zeta$ is the pushforward functor $\Dmod(\Y) \simeq \vDmod(\Y) \to\vDmod(\LY)$ in the theory of $\vDmod$-modules along the inclusion $\iota:\Y \to \LY$.

\ssec{Conventions} \label{ssec:conventions}

Our conventions on higher category theory and derived algebraic geometry follow those of \cite{Book}. Let us recall the most important ones.
The reader might also consult \cite{DGCat} for a brief digest.

\sssec{}  \label{sssec:Notation DG cats}

Throughout the paper the term \virg{DG category} means \virg{stable presentable $\kk$-linear $\infty$-category}, in the sense of \cite{HA}. 
Unless otherwise stated, our DG categories are cocomplete and functors between them are colimit preserving. 
In other words, we work within the $\infty$-category $\DGCat$ of cocomplete DG categories and continuous functors. Such $\infty$-category is symmetric monoidal when equipped with the Lurie tensor product defined in \cite{HA}.

\sssec{}

Given a DG category $\C$ as above and two objects $c, c' \in \C$, we denote by $\CH om_\C(c, c')$ the DG vector space of morphisms from $c$ to $c'$.

If $\C'$ is another DG category, the symbol $\Fun(\C,\C')$ denotes the DG category of continuous functors from $\C$ to $\C'$. We also set $\End(\C) := \Fun(\C,\C)$.

\sssec{}

We assume familiarity with the notion of dualizability for objects in a symmetric monoidal $\infty$-category, as well as with the notion of ind-completion and compact generation for DG categories. Recall that a compactly generated DG category is automatically dualizable. Given $\C \in \DGCat$, we denote by $\C^{\cpt}$ its non-cocomplete full subcategory of compact objects.

\sssec{} \label{sssec:rigidity conventions}

Let $(\CA,m)$ be a monoidal (cocomplete) DG category. Following  \cite[Section 6.1]{DGCat} and \cite[Appendix D]{ShvCat}, we say that $\CA$ is  \emph{rigid} if:
\begin{itemize}
\item
 the multiplication $m: \CA \otimes \CA \to \CA$ admits a continuous right adjoint, $m^R$;
\item
the unit $u: \Vect \to \CA$ admits a continuous right adjoint, $u^R: \CA \to \Vect$;
\item
the functor $m^R: \CA \to \CA \otimes \CA$ is $(\CA, \CA)$-linear.
\end{itemize}
This notion of rigidity might appear to be non-standard for some readers: as pointed out in \cite[Section 6.1.1]{DGCat}, when $\CA$ is compactly generated, $\CA$ is rigid iff the unit is compact and the compact objects are all left and right dualizable.
The reference \cite[Sections 4.3.6-4.3.8]{DG:Finiteness}, useful in the main body of the paper, discusses the consequences of rigidity for DG categories of quasi-coherent sheaves.

\sssec{} \label{sssec:pivotality}

Assume that $(\CA, m)$ is a compactly generated rigid monoidal DG category. Given a compact object $a \in \CA^{\cpt}$, we denote by ${}^\vee a$ and $a^\vee$ its left and right dual, respectively.
Following \cite[Definition 3.8]{BN}, we say that $(\CA,m)$ is \emph{pivotal} if we are given a natural identification between the functor 
$$
\CA^{\cpt} \longto \CA^{\cpt},
\hspace{.4cm}
a \mapsto (a^\vee)^\vee
$$
and the identity functor on $\CA^{\cpt}$.
One could define the notion of pivotality even when $\CA$ is not compactly generated, see \cite[Section 6.1.2]{DGCat}; we will not need this extra level of generality.

\sssec{}

Given a monoidal (cocomplete) DG category $\CA$, the symbol $\CA \mmod$ stands for the $\infty$-category of (cocomplete) DG categories with a left action of $\CA$ and (continuous) $\CA$-linear functors among them. Similar conventions are in place for the symbol $(\CA, \CB) \bbimod$.
If $\M$ and $\N$ belong to $\CA \mod$, the symbols 
$$
\Fun_\CA(\M,\N) := \Fun_{\CA \mmod}(\M,\N)
$$
both denote the DG category of $\CA$-linear (continuous) functors from $\M$ to $\N$.

\sssec{}

The symbols $\QCoh(-)$ and $\Dmod(-)$ always stand for the DG categories of quasi-coherent sheaves and $\fD$-modules (that is, we never refer to the abelian categories).
Accordingly, for $A$ a DG algebra, the notation $A \mod$ stands for the DG category of DG modules over $A$.

The pushforward and pullback functors between DG categories of sheaves are always understood in the derived sense. Fiber products and tensor products are always derived, too.

\sssec{}

The $\infty$-category of $\infty$-groupoids is denoted by $\Grpd_\infty$. The other main notations of (derived) algebraic geometry are reviewed in Section \ref{sec:IndCoh} and especially in Section \ref{ssec: review al geom}.

\ssec{Contents of the paper}

In the first section, we give an overview of the computation of $\Z(\H(\Y))$ to explain how $\vDmod(\LY)$ comes about. Then, in the second section, we review the bit of algebraic geometry that we need and discuss ind-coherent sheaves on our prestacks of interest: algebraic stacks and formal completions thereof.
In the third section, we define the DG categories $\ICoh_0(\Z^\wedge_\Y)$ in the bounded case (that is, when $\Y$ is bounded), and we extend the assignment $[\Y \to \Z] \squigto \ICoh_0(\Z^\wedge_\Y)$ to a functor out of a category of correspondences.
In the fourth section, we extend the definition of $\ICoh_0(\Z^\wedge_\Y)$ to the case when $\Y$ is possibly unbounded. In this context, $\ICoh_0(\Z^\wedge_\Y)$ lacks some of the pleasant features present in the bounded case; we discuss the features that do generalize.
Lastly, in the fifth section, we apply the theory developed in the previous sections to identify the center of $\H(\Y)$ with $\vDmod(\LY)$.

{\ssec{Acknowledgments}
I would like to thank David Ben-Zvi, Dennis Gaitsgory, Ian Grojnowski, Kobi Kremnizer, Sam Raskin, Pavel Safronov and Bertrand To\"en for several interesting conversations. I am grateful to the anonymous referee for carefully reading the paper and for suggesting several improvements.
Research partially supported by EPSRC programme grant EP/M024830/1 Symmetries and Correspondences, and by the grant NEDAG ERC-2016-ADG-741501.
}

\sec{Outline of the center computation} \label{sec:outline}

As anticipated in Theorem \ref{main-thm-intro}, the center $\Z(\H(\Y))$ is (slightly incorrectly!) equivalent to the category of $\fD$-modules on $\LY$, the loop stack of $\Y$. Such answer is literally correct whenever $L\Y$ is \emph{bounded}, but should otherwise be modified as explained below (from Section \ref{sssec:big catch} on).
In Section \ref{ssec:overview-computing-center}, we show how to guess this incorrect answer; this will also give hints as to how correct it, which we take up in Section \ref{sssec:big catch}.

\ssec{Computing the center} \label{ssec:overview-computing-center}

Let us get acquainted with $\H(\Y)$ by explaining a natural approach to computing its Drinfeld center. 
Recall that $\Y$ is a quasi-compact derived algebraic stack that is perfect, bounded and with perfect cotangent complex. We usually denote by $m: \H(\Y) \otimes \H(\Y) \to \H(\Y)$ the multiplication functor and by $m^\rev$ the reversed multiplication (obtained from $m$ by swapping the two factors).

\sssec{}

Since $\H(\Y)$ is rigid, the conservative functor $\ev$ admits a left adjoint $\ev^L$, whence $\Z := \Z(\H(\Y))$ is equivalent to the category of modules for the monad $\ev\circ \ev^L$ acting on $\H(\Y)$.
Moreover, thanks to the pivotality of $\H(\Y)$ discussed in Section \ref{ssec:defn of H}, the functor underlying $\ev \circ \ev^L$ is naturally isomorphic to 
$$
m^\rev \circ m^R: \H(\Y) \longto \H(\Y).
$$

\sssec{}

To understand the composition $m^\rev \circ m^R$, we first need to understand the multiplication $m$ more explicitly. 
For this, it is convenient to anticipate some of the functoriality on $\ICoh_0$ that we will develop. 
To start, note that the assignment
$$
[\Y \to \Z] 
\squigto 
\ICoh_0(\Z^\wedge_\Y)
$$
extends to a functor
$$
\bigt{ \Arr(\Stkperflfp)' }^\op
\longto
\DGCat,
$$
where $\Stkperflfp$ is the $\infty$-category of perfect stacks locally of finite presentation and $\Arr(\Stkperflfp)'$ is the full subcategory of $\Arr(\Stkperflfp) := \Fun(\Delta^1, \Stkperflfp)$ spanned by those arrows $\Y \to \Z$ with bounded $\Y$.

\sssec{}

We denote by $\phi^{!,0}$ the structure functor 
$$
\ICoh_0(\Z^\wedge_\Y) \longto \ICoh_0(\V^\wedge_\U)
$$
associated to a commutative square $[\U \to \V] \to [\Y \to \Z]$. By construction, $\phi^{!,0}$ is induced by the $\ICoh$-pullback along $\phi: \V^\wedge_\U \to \Z^\wedge_\Y$.

\medskip

If $\U = \Y$, then $\phi^{!,0}$ admits a left adjoint, that we denote by $\phi_{*,0}$. On the other hand, if the commutative square $[\U \to \V] \to [\Y \to \Z]$ is cartesian, then $\phi^{!,0}$ admits a continuous right adjoint, that we denote by $\phi_{?}$.

\sssec{}

By construction, the multiplication $m: \H(\Y) \otimes \H(\Y) \longto \H(\Y)$ is the composition of the functors
$$
m: \ICoh_0 \bigt{ (\Y \times \Y)^\wedge_\Y \times (\Y \times \Y)^\wedge_\Y  }
\xto{ (p_{12} \times p_{23})^{!,0}}
\ICoh_0 \bigt{ (\Y \times \Y \times \Y )^\wedge_\Y }
\xto{(p_{13})_{*,0}}
\ICoh_0 \bigt{ (\Y \times \Y)^\wedge_\Y }.
$$
By the theory sketched above, both arrows possess continuous right adjoints, whence $m^R$ is the continuous functor
$$
m^R: \ICoh_0 \bigt{ (\Y \times \Y)^\wedge_\Y }
\xto{(p_{13})^{!,0}}
\ICoh_0 \bigt{ (\Y \times \Y \times \Y )^\wedge_\Y }
\xto{ (p_{12} \times p_{23})_{?}}
\ICoh_0 \bigt{ (\Y \times \Y)^\wedge_\Y \times (\Y \times \Y)^\wedge_\Y  }.
$$

\sssec{}

To compute $m^\rev \circ m^R$, we will resort to the \emph{horocycle diagram} (see \cite{BN}) for the map $\Y \to \Y_\dR$. In general, the horocycle diagram attached to a map $\Y \to \Z$ is the following commutative diagram with cartesian squares: 
$$ 
\begin{tikzpicture}[scale=1.5]
\node (00) at (0,0) {$\Y \times_\Z \Y$};
\node (10) at (3.3,0) {$\Z \times_{\Z \times \Z} \Y$};
\node (20) at (6.6,0) {$\Z \times_{\Z \times \Z} \Z$.};
\node (01) at (0,1) {$\Y \times_{\Z} \Y \times_{\Z} \Y$};
\node (11) at (3.3,1) {$\Y \times_{\Z \times \Z} \Y$};
\node (21) at (6.6,1) {$\Y \times_{\Z \times \Z} \Z$};
\node (02) at (0,2) {$\Y \times_\Z \Y \times \Y \times_\Z \Y$};
\node (12) at (3.3,2) {$\Y \times_{\Z} \Y \times_{\Z} \Y$};
\node (22) at (6.6,2) {$\Y \times_\Z \Y$};
\path[->,font=\scriptsize,>=angle 90]
(12.east) edge node[below] {$p_{13}$} (22.west);
\path[->,font=\scriptsize,>=angle 90]
(11.east) edge node[below] {$ $} (21.west);
\path[->,font=\scriptsize,>=angle 90]
(10.east) edge node[below] {$ $} (20.west);
\path[<-,font=\scriptsize,>=angle 90]
(02.east) edge node[below] {$p_{12} \times p_{23}$} (12.west);
\path[<-,font=\scriptsize,>=angle 90]
(01.east) edge node[below] {$ $} (11.west);
\path[<-,font=\scriptsize,>=angle 90]
(00.east) edge node[below] {$ $} (10.west);
\path[<-,font=\scriptsize,>=angle 90]
(02.south) edge node[right] {$p_{23} \times p_{12}$} (01.north);
\path[<-,font=\scriptsize,>=angle 90]
(12.south) edge node[below] {$ $} (11.north);
\path[<-,font=\scriptsize,>=angle 90]
(22.south) edge node[below] {$ $} (21.north);
\path[->,font=\scriptsize,>=angle 90]
(01.south) edge node[right] {$p_{13}$} (00.north);
\path[->,font=\scriptsize,>=angle 90]
(11.south) edge node[below] {$ $} (10.north);
\path[->,font=\scriptsize,>=angle 90]
(21.south) edge node[below] {$ $} (20.north);
\end{tikzpicture}
$$

\sssec{}

Applied to the case $\Z = \Y_\dR$, and using the tautological isomorphisms 
$$
(\Y \times \Y)^\wedge_\Y \simeq \Y \times_{\Y_\dR} \Y,
\hspace{.4cm}
\pt^\wedge_\Z \simeq \Z_\dR,
$$
the above diagram becomes
$$ 
\begin{tikzpicture}[scale=1.5]
\node (00) at (0,0) {$(\Y \times \Y)^\wedge_\Y$};
\node (10) at (3.3,0) {$\Y^\wedge_{L\Y}$};
\node (20) at (6.6,0) {$ \pt^\wedge_{L\Y}$,};
\node (01) at (0,.9) {$(\Y \times \Y \times \Y)^\wedge_\Y$};
\node (11) at (3.3,.9) {$(\Y \times \Y)^\wedge_{L\Y}$};
\node (21) at (6.6,.9) {$\Y^\wedge_{L\Y}$};
\node (02) at (0,1.8) {$(\Y \times \Y)^\wedge_\Y \times (\Y \times \Y)^\wedge_\Y$};
\node (12) at (3.3,1.8) {$(\Y \times \Y \times \Y)^\wedge_\Y$};
\node (22) at (6.6,1.8) {$(\Y \times \Y)^\wedge_\Y$};
\path[->,font=\scriptsize,>=angle 90]
(12.east) edge node[below] {$p_{13}$} (22.west);
\path[->,font=\scriptsize,>=angle 90]
(11.east) edge node[below] {$t$} (21.west);
\path[->,font=\scriptsize,>=angle 90]
(10.east) edge node[below] {$v$} (20.west);
\path[<-,font=\scriptsize,>=angle 90]
(02.east) edge node[below] {$P$} (12.west);
\path[<-,font=\scriptsize,>=angle 90]
(01.east) edge node[below] {$r$} (11.west);
\path[<-,font=\scriptsize,>=angle 90]
(00.east) edge node[below] {$s$} (10.west);
\path[<-,font=\scriptsize,>=angle 90]
(02.south) edge node[right] {$P'$} (01.north);
\path[<-,font=\scriptsize,>=angle 90]
(12.south) edge node[right] {$r'$} (11.north);
\path[<-,font=\scriptsize,>=angle 90]
(22.south) edge node[right] {$s$} (21.north);
\path[->,font=\scriptsize,>=angle 90]
(01.south) edge node[right] {$p_{13}$} (00.north);
\path[->,font=\scriptsize,>=angle 90]
(11.south) edge node[right] {$t'$} (10.north);
\path[->,font=\scriptsize,>=angle 90]
(21.south) edge node[right] {$v$} (20.north);
\end{tikzpicture}
$$
where we have set $P:=p_{12} \times p_{23}$ and $P' = p_{23} \times p_{12}$ to save space. Observe that $m^\rev$ is the composition
$$
m^\rev:
 \ICoh_0 \bigt{ (\Y \times \Y)^\wedge_\Y \times (\Y \times \Y)^\wedge_\Y  }
\xto{ (P')^{!,0}}
\ICoh_0 \bigt{ (\Y \times \Y \times \Y )^\wedge_\Y }
\xto{(p_{13})_{*,0}}
\ICoh_0 \bigt{ (\Y \times \Y)^\wedge_\Y }.
$$

\sssec{Notation} \label{ssec:notation regarding LY}

We have denoted by $\LY:= \Y \times_{\Y \times \Y} \Y$ the \emph{loop stack} of $\Y$: the fiber product of the diagonal map $\Delta: \Y \to \Y \times \Y$ with itself. There are two standard maps: the insertion of \virg{constant loops} $\iota: \Y \to \LY$ and the projection $\pi:\LY \to \Y$.

\sssec{}

Assume for the time being that $L\Y$ is bounded, so that the DG category $\ICoh_0(\W^\wedge_{L\Y})$ makes sense for any $L\Y \to \W$. Then we can consider the diagram
\begin{equation} \label{diag:crazy-diagram}
\begin{tikzpicture}[scale=1.5]
\node (00) at (0,0) {$\ICoh_0( (\Y \times \Y)^\wedge_\Y)$};
\node (10) at (4,0) {$\ICoh_0(\Y^\wedge_{L\Y})$};
\node (20) at (8,0) {$ \ICoh_0 (\pt^\wedge_{L\Y}).$};

\node (01) at (0,1.2) {$\ICoh_0( (\Y \times \Y \times \Y)^\wedge_\Y) $};
\node (11) at (4,1.2) {$\ICoh_0( (\Y \times \Y)^\wedge_{L\Y})$};
\node (21) at (8,1.2) {$\ICoh_0(\Y^\wedge_{L\Y})$};

\node (02) at (0,2.4) {$\ICoh_0((\Y \times \Y)^\wedge_\Y \times (\Y \times \Y)^\wedge_\Y)$};
\node (12) at (4,2.4) {$\ICoh_0((\Y \times \Y \times \Y)^\wedge_\Y)$};
\node (22) at (8,2.4) {$\ICoh_0((\Y \times \Y)^\wedge_\Y)$};

\path[->,font=\scriptsize,>=angle 90]
(12.east) edge node[below] {$(p_{13})_{*,0}$} (22.west);
\path[->,font=\scriptsize,>=angle 90]
(11.east) edge node[below] {$t_{*,0}$} (21.west);
\path[->,font=\scriptsize,>=angle 90]
(10.east) edge node[below] {$v_{*,0}$} (20.west);
\path[->,font=\scriptsize,>=angle 90]
(02.east) edge node[below] {$P^{!,0}$} (12.west);
\path[->, font=\scriptsize,>=angle 90]
(01.east) edge node[below] {$r^{!,0}$} (11.west);
\path[->,font=\scriptsize,>=angle 90]
(00.east) edge node[below] {$s^{!,0}$} (10.west);
\path[<-,font=\scriptsize,>=angle 90]
(02.south) edge node[right] {$(P')_{?}$} (01.north);
\path[<-,font=\scriptsize,>=angle 90]
(12.south) edge node[right] {$(r')_?$} (11.north);
\path[<-,font=\scriptsize,>=angle 90]
(22.south) edge node[right] {$(s')_?$} (21.north);
\path[<-,font=\scriptsize,>=angle 90]
(01.south) edge node[right] {$(p_{13})^{!,0}$} (00.north);
\path[<-,font=\scriptsize,>=angle 90]
(11.south) edge node[right] {$(t')^{!,0}$} (10.north);
\path[<-,font=\scriptsize,>=angle 90]
(21.south) edge node[right] {$(v')^{!,0}$} (20.north);
\end{tikzpicture}
\end{equation}

\sssec{}

As an application of the functoriality of $\ICoh_0$ (specifically, the base-change isomorphisms established in Sections \ref{sec:ICoh-zero-bounded} and \ref{sec:beyond-ev-coc}), one easily proves that these four squares are commutative. It follows that the monad $\ev \circ \ev^L \simeq m^\rev \circ m^R$ is isomorphic (as a plain functor) to the monad of the adjunction
\begin{equation} \label{adj:introduction-center-to-H-and-back}
\begin{tikzpicture}[scale=1.5]
\node (a) at (0,1) {$ \H(\Y) = \ICoh_0( (\Y \times \Y)^\wedge_\Y  )$};
\node (b) at (4,1) {$\ICoh_0(\pt^\wedge_{L\Y}) $.};
\path[->,font=\scriptsize,>=angle 90]
([yshift= 1.5pt]a.east) edge node[above] {$\beta: = v_{*,0} \circ s^{!,0}$} ([yshift= 1.5pt]b.west);
\path[->,font=\scriptsize,>=angle 90]
([yshift= -1.5pt]b.west) edge node[below] {$\beta^R := s_? \circ v^{!,0} $} ([yshift= -1.5pt]a.east);
\end{tikzpicture}
\end{equation}
We emphasize again that $\ICoh_0(\pt^\wedge_{\LY}) \simeq \Dmod(\LY)$, with $\ICoh_0(\pt^\wedge_{\LY})$ being well-defined thanks to the boundedness of $\LY$.

\sssec{}

It is not hard to check that the right adjoint in (\ref{adj:introduction-center-to-H-and-back}) is conservative. Moreover, one can show that the isomorphism $\ev \circ \ev^L \simeq \beta^R \circ \beta$ preserves the monad structures. All in all, we obtain $\Z(\H(\Y)) \simeq \Dmod(L\Y)$ whenever $\LY$ is bounded.
Moreover, in this case, the functor $\zeta: \Dmod(\Y) \to \Z(\H(\Y))$ making the diagram (\ref{diag:Dmod Y mapping to center}) commutative is simply the pushforward $\iota_{*,\dR}:\Dmod(\Y) \to \Dmod(\LY)$ along $\iota: \Y \to \LY$.

\begin{rem}
The center of a monoidal DG category comes equipped with a monoidal structure. In the case at hand, the monoidal structure on $\Dmod(\LY)$ is the one induced by \emph{composition of loops}, that is, by the correspondence
$$ 
\LY \times \LY \longleftarrow \LY \times_\Y \LY \longto \LY.
$$
We will not use such monoidal structure in this paper and therefore do not discuss it further.
\end{rem}

\begin{example}

As an example of the above computation, consider the case where $\Y = BG$, the classifying stack of an affine algebraic group $G$. Then $\LY$ is isomorphic to the adjoint quotient $G/G$, which is bounded (in fact, smooth). 
By \cite[Section 2]{Be}, we know that 
$$
\H(BG) \simeq \ICoh(G \backslash G_\dR /G)
$$
is the monoidal DG category of \emph{Harish-Chandra bimodules} for $G$. The theorem states that its center is equivalent to $\Dmod(G/G)$.

\end{example}

\ssec{Beyond the bounded case} \label{sssec:big catch}

The issue with the above argument leading to $\Z(\H(\Y)) \simeq \Dmod(\LY)$ is that boundedness of $\LY$ is rare (for instance, see Remark \ref{rem:LY boundd iff Y is smooth} below) and, for $\LY$ unbounded, the entire bottom-right square of (\ref{diag:crazy-diagram}) makes no sense.

\medskip

To remedy this, we need to search for an extension of the definition of $\ICoh_0(\Z^\wedge_\X)$ to the case of unbounded $\X$. Such definition must come with functors making the four squares of (\ref{diag:crazy-diagram}) commutative: then the above argument would go through and would show that the center of $\H(\Y)$ is equivalent to $\ICoh_0(\pt^\wedge_{L\Y})$, \emph{whatever the latter means}.

\sssec{}

To concoct this more general definition, we will try to adapt (\ref{eqn:def of ICOHzero with universal enveloping alg}), that is, we will try to define $\ICoh_0(\Z^\wedge_\X)$ as the DG category of modules over a monad acting on $\QCoh(\X)$.

\medskip

The most naive attempt is to take the same formula as in (\ref{eqn:def of ICOHzero with universal enveloping alg}); indeed, the expression $\Phi_\X \circ \U(\Tang_{\X/\Z}) \circ \Upsilon_\X$ still makes sense as a monad. This attempt fails, however, as such monad is discontinuous in general (indeed $\Phi_\X$ is continuous iff $\X$ is bounded).

\sssec{}

To fix such discontinuity, we could restrict the functor in question to $\Perf(\X)$, and then ind-complete. Let us denote the resulting (continuous) functor by 
\begin{equation} \label{eqn:renormalization of universal enveloping alg}
(\Phi_\X \U(\Tang_{\X/\Z}) \Upsilon_\X )^\ren:
\QCoh(\X) \longto \QCoh(\X).
\end{equation}
We claim that such definition is not the right one either. To see this, look at the result of this operation in the case where $\Tang_{\X/\Z}$ is an abelian Lie algebra in $\ICoh(\X)$, so that 
$$
\U(\Tang_{\X/\Z}) \simeq \Sym(\Tang_{\X/\Z}) \stackrel ! \otimes -:
\ICoh(\X) \longto \ICoh(\X).
$$
In such simple case, we expect our monad to be the functor of tensoring with the symmetric algebra of $\Tang^\QCoh_{\X/\Z}$. What we get instead is the functor of tensoring with
$$
(\Phi_\X \Sym(\Tang_{\X/\Z}) \Upsilon_\X )^\ren (\O_X)
\simeq
\Phi_\X \Upsilon_\X  \bigt{ \Sym^{\QCoh}(\Tang^\QCoh_{\X/\Z}) }.
$$
This object is the \emph{convergent renormalization}\footnote{See Section \ref{sssec:conv renormalization} for the definition in the scheme case (the stack case is similar).} 
of $\Sym(\Tang^\QCoh_{\X/\Z})$, which is different from $\Sym(\Tang^\QCoh_{\X/\Z})$ as soon as the latter is not bounded above in the t-structure of $\QCoh(\X)$.
Working with such convergent renormalizations is not pleasant: in fact, all the base-change results that we need fail.

\sssec{}

We can however turn the above failure into a positive observation. 
Note that $\Sym(\TangQ_{\X/\Z})$ and $\Sym(\Tang_{\X/\Z})$ are filtered by $\Sym^{\leq n}(\TangQ_{\X/\Z})$ and $\Sym^{\leq n}(\Tang_{\X/\Z})$, respectively. Since $\Sym^{\leq n}(\TangQ_{\X/\Z})$ is perfect for each $n$ (in particular, bounded above), the renormalization procedure (\ref{eqn:renormalization of universal enveloping alg}) applied to 
$$
\Sym^{\leq n}(\Tang_{\X/\Z}) \stackrel ! \otimes - :
\ICoh(\X) \longto \ICoh(\X)
$$ 
yields precisely the functor
$$
\Sym^{\leq n}(\Tang^\QCoh_{\X/\Z}) \otimes -:
\QCoh(\X) \longto \QCoh(\X).
$$
We thus have
\begin{equation}
\Sym (\TangQ_{\X/\Z})
\simeq 
\uscolim{n \geq 0 }
\bigt{
\Phi_\X \circ
\Sym(\Tang_{\X/\Z})^{\leq n}
\circ
\Upsilon_\X
}^\ren.
\end{equation}

\sssec{}

The general situation is analogous, thanks to the existence of a canonical filtration of $\U(\Tang_{\X/\Z})$, the PBW filtration, which specializes to the above in the case of abelian Lie algebras. See \cite[Volume 2, Chapter 9, Section 6]{Book}.
Rather than renormalizing $\U(\Tang_{\X/\Z})$ itself, we renormalize each piece of the filtration and then put them together. In symbols, we define
\begin{equation}
\UQ(\Tang_{\X/\Z})
:=
\uscolim{n \geq 0 } \bigt{
\UQ(\Tang_{\X/\Z})^{\leq n}}
\end{equation}
where
\begin{equation}
\UQ(\Tang_{\X/\Z})^{\leq n} :=
\bigt{
\Phi_\X \circ
\U(\Tang_{\X/\Z})^{\leq n}
\circ
\Upsilon_\X
}^\ren
\end{equation}
is the only continuous functor whose restriction to $\Perf(\X)$ is given by $\Phi_\X
\U(\Tang_{\X/\Z})^{\leq n}
\Upsilon_\X$.

\sssec{}

In Section \ref{ssec: defin of Icohzero unbdd case}, we will prove that $\UQ(\Tang_{\X/\Z})$ comes equipped with the structure of a monad on $\QCoh(\X)$. This allows to extend the definition of $\ICoh_0$ to the case of $\X$ unbounded as
$$
\ICoh_0(\Z^\wedge_\X)
:=
\UQ(\Tang_{\X/\Z}) \mod (\QCoh(\X)),
$$
see Definition \ref{defn:ICohzero unbounded case} where this is done officially.
In the later parts of Section \ref{sec:beyond-ev-coc}, we show that the assignment
$$
[\X \to \Z]
\squigto
\ICoh_0(\Z^\wedge_\X)
:=
\UQ(\Tang_{\X/\Z}) \mod (\QCoh(\X))
$$
possesses all the functorialities that we need for the computation of $\Z(\H(\Y))$. 
Specifically, our main Theorems \ref{thm:Main-first-part} and \ref{MAIN-thm} will assert that
$$
\Z(\H(\Y))
\simeq
\ICoh_0(\pt^\wedge_\LY) =: \vDmod(\LY),
$$
sitting in a monadic adjunction $\H(\Y) \rightleftarrows \Z(\H(\Y))$ defined exactly as in (\ref{adj:introduction-center-to-H-and-back}).

\sssec{} \label{sssec:list-of-equivalences}

Let us comment on the relationship between $\Dmod(\LY)$ and $\vDmod(\LY)$ in the general case. As shown in Section \ref{sssec: passage from left to right unbdd setting}, there always exists a tautological functor
\begin{equation} \label{eqn:tau-arrow}
\Upsilon^\fD_{\LY/\pt}: 
\vDmod(\LY)
\longto
\Dmod(\LY),
\end{equation}
which can be regarded as a passage from left to right $\fD$-modules in our setting.
Contrarily to the usual setting, this functor is not an equivalence in general (see Corollary \ref{cor:vDmod on Yn} for an example).

\begin{rem} \label{rem:LY boundd iff Y is smooth}
For $\Y$ quasi-smooth, the following assertions are equivalent:
\begin{enumerate}
\item
$\Y$ is smooth;
\item
$\LY$ is quasi-smooth;
\item
$\LY$ is bounded;
\item
the above functor $\Upsilon^\fD_{\LY/\pt}$ is an equivalence.
\end{enumerate} 
The implications $(1) \Rightarrow (2) \Rightarrow (3)$ are obvious. 
The implication $(3) \Rightarrow (4)$ will be a consequence of the fact that $\UQ(\Tang_\X) \simeq \Phi_\X \U(\Tang_\X) \Upsilon_\X$ whenever $\X$ is bounded, combined with Example \ref{example: D-mods as Icohzero -intro}.
The implication  $(4) \Rightarrow (1)$ not needed in the present paper, will be discussed elsewhere.
\end{rem}

\section{Ind-coherent sheaves on formal completions} \label{sec:IndCoh}

This section is devoted to recalling the theory of ind-coherent sheaves. We are particularly interested in ind-coherent sheaves on formal completions of perfect stacks. The main references are \cite{ICoh} and \cite{Book}.

\ssec{Some notions of derived algebraic geometry} \label{ssec: review al geom}

\sssec{}

Denote by $\Aff$ the $\infty$-category of affine (DG) schemes over $\kk$. We usually omit the adjective \virg{DG}, so our schemes (and then prestacks) are derived unless stated otherwise.
An affine scheme $\Spec(A)$ is \emph{bounded} if $H^i(A)$ is zero except for finitely many $i \leq 0$.

Denote by $\Aff_\aft \subseteq \Aff$ the $\infty$-category of affine schemes \emph{almost of finite type}: by definition, $\Spec(A)$ belongs to $\Aff_\aft$ if and only if $H^0(A)$ is of finite type over $\kk$ and each cohomology $H^{i}(A)$ is finitely generated as a module over $H^0(A)$.

We shall often consider affine schemes that are both bounded and almost of finite type: we denote by $\Affaftevcoc$ the $\infty$-category they form.

Denote by $\Sch_{\aft}$ the $\infty$-category of quasi-compact (DG) schemes almost of finite type. Such a scheme $S \in \Sch_{\aft}$ is bounded if it is Zariski locally so (hence we reduce to the definition for affine schemes). Moreover, $S$ is the colimit of its truncations $S^{\leq n}$, as $n \to \infty$.

\sssec{}

A prestack is an arbitrary functor $\Y: \Aff^\op \to \inftyGrpd$. Denote by $\PreStk$ the $\infty$-category of prestacks. 
Important for us is the subcategory $\PreStk_\laft$ of prestacks that are \emph{locally almost of finite type (laft)}, see \cite[Volume 1, Chapter 2, Section 1.7]{Book}.
Rather than the actual definition, what we need to know about $\PreStk_\laft$ are its following properties:
\begin{itemize}
\item
it is closed under fiber products;

\item
it is closed under the operation $\Y \squigto \Y_\dR$ (the \emph{de Rham} prestack of $\Y$);
\item
it contains all perfect stacks (see below).
\end{itemize}

\begin{example}
In particular, for $\Y \to \Z$ in $\PreStk_\laft$, the \emph{formal completion of $\Y$ in $\Z$}, i.e. the fiber product
$$
\Z^\wedge_\Y := \Z \ustimes{\Z_\dR} \Y_\dR,
$$
is laft.
\end{example}

\begin{rem}
The point of the condition $\laft$ is that $\PreStk_\laft$ is equivalent to the $\infty$-category of arbitrary functors from $(\Affevcocaft)^\op$ to $\inftyGrpd$.
\end{rem} 

\sssec{Algebraic stacks} \label{sssec:stacks reminders}

We will be quite restrictive on the kinds of stacks that we deal with. Namely, we denote by $\Stk \subset \PreStk$ the full subcategory consisting of those (quasi-compact) algebraic stacks with affine diagonal and with an atlas in $\Aff_\aft$. We will just call them \emph{stacks}. 

\sssec{}

We say that $\Y \in \Stk$ is \emph{bounded} if for some (equivalently: any) atlas $U \to \Y$, the affine scheme $U$ is bounded.
Denote by $\Stkevcoc \subset \Stk$ the full subcategory of bounded stacks. It is closed under products, but not under fiber products.  
We say that a map $\Y \to \Z$ in $\Stk$ is \emph{bounded} if, for any $S \in (\Affevcocaft)_{/\Z}$, the fiber product $S \times_\Z \Y$ belongs to $\Stkevcoc$.

\medskip

Following \cite{BFN}, we say that  $\Y \in \Stk$ is \emph{perfect} if the DG category $\QCoh(\Y)$ is compactly generated by its subcategory $\Perf(\Y)$ of perfect objects.

\medskip

We say that  $\Y \in \Stk$ is \emph{locally finitely presented (lfp)} if its cotangent complex $\LL_{\Y}  \in \QCoh(\Y)$ is perfect. In that case, we denote by $\TangQ_{\Y} \in \Perf(\Y)$ its monoidal dual.

\medskip

We denote by $\Stkperfevcoclfp \subseteq \Stk$ the full subcategory of stacks that are perfect, bounded and locally of finite presentation. Similarly, the notations $\Stkperflfp$ and $\Stkperf$ have the evident meaning. By \cite[Proposition 3.24]{BFN}, $\Stkperf$ is closed under fiber products (this is because our stacks have affine diagonal by assumption).

\ssec{Ind-coherent sheaves on schemes}

This section is a recapitulation of parts of \cite{ICoh}, \citep{Book} and \cite{DG:Finiteness}. It is included for the reader's convenience and to fix the notation.

\sssec{}

For a scheme $S \in \Sch_\aft$, consider the non-cocomplete DG category $\Coh(S)$, the DG category of cohomologically bounded complexes with coherent cohomology. 
We define $\IndCoh(S) := \Ind(\Coh(S))$ to be its ind-completion. The latter comes equipped with an action of $\QCoh(S)$ and a tautological $\QCoh(S)$-linear functor $\Psi_S: \IndCoh(S) \to \QCoh(S)$.

\begin{prop}
$\Psi_S$ is an equivalence iff $S$ is a smooth classical scheme.
\end{prop}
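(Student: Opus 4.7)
The plan is to establish the two implications separately, exploiting the universal property of $\IndCoh(S) = \Ind(\Coh(S))$: the functor $\Psi_S$ is the unique colimit-preserving extension of the tautological inclusion $\Coh(S) \hookrightarrow \QCoh(S)$.

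\textbf{Sufficiency.} Assume $S$ is smooth and classical. By the classical theorem that over a regular Noetherian scheme every bounded complex of coherent sheaves has finite Tor-amplitude, we have $\Coh(S) = \Perf(S)$ as full subcategories of $\QCoh(S)$. Since $\Perf(S)$ is precisely the subcategory of compact generators of $\QCoh(S)$, ind-completion gives
\[
\IndCoh(S) = \Ind(\Coh(S)) = \Ind(\Perf(S)) = \QCoh(S),
\]
and $\Psi_S$ is tautologically the identity.

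\textbf{Necessity.} Conversely, suppose $\Psi_S$ is an equivalence. Any equivalence between compactly generated DG categories identifies the subcategories of compact objects. The compact objects of $\IndCoh(S)$ are by construction (the idempotent completion of) $\Coh(S)$, while those of $\QCoh(S)$ are $\Perf(S)$. Since $\Psi_S$ sends $\Coh(S)$ to the image of the structural inclusion, we obtain $\Coh(S) = \Perf(S)$ inside $\QCoh(S)$ (the inclusion $\Perf(S) \subseteq \Coh(S)$ always holds on a Noetherian scheme).

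It remains to verify that $\Coh(S) = \Perf(S)$ forces $S$ to be smooth and classical. First, if $S$ has nontrivial derived structure, then the truncation $\O_{S_\cl} \in \Coh(S)$ has infinite Tor-amplitude over $\O_S$, since locally its Koszul-type resolution does not terminate; hence it is not perfect, a contradiction. Second, for classical Noetherian schemes of finite type over $\kk$, the equality $\Coh = \Perf$ is equivalent to regularity of all local rings (by Auslander--Buchsbaum--Serre), which in characteristic zero over $\kk$ coincides with smoothness.

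\textbf{Main obstacle.} The most delicate step is the final descent from $\Coh(S) = \Perf(S)$ to the classicality of $S$. The argument via $\O_{S_\cl}$ requires a careful local computation: one must show that for a genuinely derived $S$, the canonical map $\O_S \to \O_{S_\cl}$ witnesses $\O_{S_\cl}$ as a non-perfect object, e.g.\ by exhibiting infinitely many nonzero $\mathrm{Tor}$ groups at a point where the derived structure is nontrivial. The smoothness direction, by contrast, is a direct consequence of standard commutative algebra and causes no trouble.
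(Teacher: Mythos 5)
The paper itself gives no proof of this proposition: it is recalled verbatim from \cite{ICoh}, so there is no in-text argument to compare yours against. Your overall strategy is the standard one (match compact objects on both sides, reduce to the equality $\Coh(S)=\Perf(S)$, then invoke commutative algebra), and the sufficiency direction is correct as written.

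The necessity direction contains a genuine gap, and one of your auxiliary claims is false. The parenthetical ``$\Perf(S)\subseteq\Coh(S)$ always holds on a Noetherian scheme'' fails for unbounded derived schemes: $\O_S$ is always perfect but is coherent only when its cohomology is bounded. This is repairable --- the equivalence identifies compacts directly, giving $\Coh(S)=\Perf(S)$, and then $\O_S\in\Coh(S)$ forces $S$ to be eventually coconnective --- but you must extract boundedness this way rather than assume the inclusion. The serious problem is the decisive step: your claim that for genuinely derived $S$ the object $\O_{S_{\mathrm{cl}}}$ has infinite Tor-amplitude ``since locally its Koszul-type resolution does not terminate'' is not true without the boundedness just established, and the paper's own example exhibits the failure. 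For $S=\Spec(\Sym V^*[2])$ the truncation $H^0(\O_S)=\kk$ \emph{is} perfect over $\O_S$: the Koszul-dual generators sit in odd degree, so the Koszul resolution is an exterior algebra and terminates, even though $S$ is maximally non-classical. So the argument must run: (i) boundedness of $S$ from $\O_S\in\Coh(S)$; (ii) for $S$ bounded, Noetherian and non-classical (or classical but singular), the residue field at a suitable point is coherent of infinite Tor-amplitude. Step (ii) is a derived analogue of the Auslander--Buchsbaum--Serre characterization of regularity and is precisely the content that needs proof; asserting that the resolution does not terminate presupposes the conclusion. As it stands, your write-up is a correct outline with the key local computation missing.
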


For the proof, see \cite[Lemma 1.1.6 and Proposition 1.4.6]{ICoh}.

\sssec{} \label{sssec:Psi for ICoh}

Boundedness of $S$ is equivalent to $\Psi_S$ having a fully faithful left adjoint $\Xi_S: \QCoh(S) \to \IndCoh(S)$. Indeed, for $\Perf(S)$ to be contained in $\Coh(S)$, the structure sheaf has to be bounded. (When $\Xi_S$ exists, it is automatically $\QCoh(S)$-linear.)
Thus, for bounded schemes, $\IndCoh$ is an enlargement of $\QCoh$; more precisely, $\Psi$ is a colocalization. 

\begin{rem}
For unbounded schemes, the situation is unwieldy. For instance, consider the scheme $S= \Spec (\Sym V^*[2])$, with $V$ a finite dimensional ordinary vector space over $\kk$.
In this case, $\Psi_S$ is fully faithful (but not an equivalence): indeed, the augmentation module Karoubi generates $\Coh(S)$ and it is perfect by Example \ref{example:Koszul duality} below.
\end{rem}

\sssec{} \label{sssec:ICOH functoriality on schemes}

The assignment $S \squigto \ICoh(S)$ underlies an $\2$-functor
$$
\Corr(\Sch_\aft)_{\all;\all}^{\proper} \longto \DGCat^{2-\Cat},
$$
where $\DGCat^{2-\Cat}$ denotes the $\2$-category of DG categories and the notation $\Corr(\C)_{\vert;\horiz}^{\adm}$ is taken from \cite[Volume 1, Chapter 7]{Book}.
In any case, the above $\2$-functor is a fancy way to encode the following data:
\begin{itemize}
\item
 for any map $f: S \to T$ in $\Sch_\aft$, we have a push-forward functor $f_*^\IndCoh: \ICoh(S) \to \ICoh(T)$ and a pullback functor  $f^!:\IndCoh(T) \to \IndCoh(S)$;
\item
 push-forwards and pull-backs are equipped with base-change isomorphisms along Cartesian squares;
 \item
 if $f$ is proper, then $f_*^\ICoh$ is \emph{left adjoint} to $f^!$.
\end{itemize}

\sssec{}

The action of $\QCoh(S)$ on $\ICoh(S)$ and the canonical object $\omega_S := (p_S)^!(\kk) \in \ICoh(S)$ yield the functor 
$$
\Upsilon_S := - \otimes \omega_S 
:
\QCoh(S) \longto \ICoh(S).
$$
The latter admits a continuous right adjoint if and only if $\omega_S \in \Coh(S)$, which in turn is equivalent to $S$ being bounded. Since such right adjoint does not have a notation in the original paper \cite{ICoh}, we shall call it $\Phi_S$.

\begin{example} \label{example:Koszul duality}
Let $Y_n = \Spec (\Sym V^*[n])$, with $V$ a nonzero finite dimensional ordinary vector space and $n \geq 1$. Then 
\begin{equation} \label{eqn:QCoh on Yn }
\QCoh(Y_n) \simeq  (\Sym V^*[n]) \mod,
\end{equation}
tautologically. On the other hand, we have the \emph{Koszul duality} equivalence
\begin{equation} \label{eqn:ICoh on Yn }
\ICoh(Y_n) \simeq  \Sym (V[-n-1])\mod.
\end{equation}
Indeed, $\Coh(Y_n)$ is generated by a single object, the augmentation module, and a standard Koszul resolution yields
$$
\CH om_{\Sym V^*[n]}(\kk, \kk) \simeq \Sym (V[-n-1]).
$$
Under \eqref{eqn:QCoh on Yn } and \eqref{eqn:ICoh on Yn }, the functor $\Upsilon_{Y_n}$ is the tensor product with the augmentation $\kk$, the latter viewed as a $(\Sym (V^*[n]), \Sym (V[-n-1])$-bimodule.

\end{example}

\sssec{} \label{sssec:conv renormalization}

As in \cite[Lemma F.5.8]{AG}, one shows that the composition $\Phi_S \circ \Upsilon_S$ is the functor of \emph{convergent renormalization}, computed explicitly as follows:
$$
M \squigto 
M^\conv:= \lim_{n \geq 0} \, 
(i_n)_* (i_n)^* M
\in \QCoh(S),
$$
where $i_n: S^{\leq n} \hto S$ is the inclusion of the $n$-connective truncation of $S$.
This shows that $\Upsilon_S$ is fully faithful when $S$ is bounded: indeed, in that case $S \simeq S^{\leq n}$ for some $n$ and the limit stabilizes.

\sssec{} \label{sssec: Ups fully fiath on perf}

More generally, regardless of whether $S$ is bounded or not, the unit of the adjunction $M \to M^\conv$ is the identity on $\Perf(S)$ and more generally on $\QCoh(S)^-$ (the full subcategory of $\QCoh(S)$ consisting of objects bounded from above in the usual t-structure on $\QCoh(S)$).
In particular, whether $S$ is bounded or not, we can consider $\Upsilon_S(\Perf(S))$ and $\Upsilon_S(\QCoh(S)^-)$ as (non-cocomplete) full subcategories of $\ICoh(S)$.

\begin{example}
The functor $\Upsilon_S$ fails to be fully faithful for the simplest unbounded scheme $S = \Spec \kk[u]$ (here $u$ is a variable in cohomological degree $-2$). To see this, it suffices to prove that the functor $\Phi_S \circ \Upsilon_S : M \squigto M^\conv$ is not conservative: indeed, the $\kk[u]$-module $\kk[u, u^{-1}]$ is obviously nonzero and yet $(\kk[u, u^{-1}])^\conv \simeq 0$ for degree reasons.
\end{example}

\begin{rem}
The same idea should prove that $\Upsilon_S$ is not fully faithful as soon as $S$ is unbounded.
\end{rem}

\sssec{}

Let $\CA$ be a monoidal DG category acting on $\C$. Recall our conventions on DG categories explained in Section \ref{ssec:conventions}. For $c \in \C$, consider the \emph{possibly discontinuous} functor
$$
\ul\Hom_\CA(c,-): \C \longto \CA,
$$
the right adjoint to the functor of action on $c$. 
For istance, $\Phi_S \simeq \ul\Hom_{\QCoh(S)} (\omega_S,-)$, where we are of course using the standard action of $\QCoh(S)$ on $\ICoh(S)$.

\sssec{} \label{sssec:Serre duality}

Consider instead the functor
$$
\ul\Hom_{\QCoh(S)}(-, \omega_S): \Coh(S)^\op \longto \QCoh(S).
$$
It is shown in \cite[Lemma 9.5.5]{ICoh} that the above yields an involutive equivalence
$$
\bbD^\Serre_S: \Coh(S)^\op \longto \Coh(S),
$$
which is the usual Serre duality. Such equivalence exhibits $\ICoh(S)$ as its own dual.

\sssec{} \label{sssec:Serre duality on Coh and Perf}

For bounded $S$, it is easy to see that $\bbD^\Serre_S$ exhanges the two subcategories $\Perf(S) \subseteq \Coh(S)$ and $\Upsilon_S(\Perf(S)) \subseteq \Coh(S)$. Indeed, the definition of $\bbD^\Serre_S$ immediately yields that
$$
\bbD^\Serre_S (\Upsilon_S(P)) \simeq \bbD^{\QCoh}_S(P),
\hspace{.4cm} P \in \Perf(S),
$$
where $\bbD_S^\QCoh$ is the standard duality involution on $\Perf(S)$. 

\begin{rem} \label{rem: Serre duality extended}

The Serre duality isomorphism
\begin{equation}\label{eqn:Serre duality generalized}
\CH om_{\ICoh(S)}(M,N)
\simeq
\CH om_{\ICoh(S)} (\bbD^\Serre_S(N), \bbD^\Serre_S(M)),
\hspace{.4cm} \mbox{for $M, N \in \Coh(S)$,}
\end{equation}
holds true slightly more generally. 
Indeed, observe that $\bbD^\Serre_S := 
\ul\Hom_{\QCoh(S)}(-, \omega_S)$ extends to a contravariant functor of $\ICoh(S)$ that sends colimits to limits. Then it is easy to check that \eqref{eqn:Serre duality generalized} is valid for $N \in \Coh(S)$ and $M \in \ICoh(S)$ arbitrary. 
\end{rem}

\ssec{Ind-coherent sheaves on laft prestacks}

\sssec{}

Recall that a laft prestacks are by definition presheaves of spaces on $\Affaftevcoc$.  %
Ind-coherent sheaves are defined for arbitrary laft prestacks: one simply right-Kan extends the functor 
$$
\ICoh^!: (\Affevcocaft)^\op  \to \DGCat
$$
along $\Affevcocaft \hto \PreStk_\laft$.
In particular, for any $\Y \in \PreStk_\laft$, the $!$-pullback along $\Y \to \pt$ yields a canonical object $\omega_\Y \in \IndCoh(\Y)$. 
Since, as in the case of schemes, $\IndCoh(\Y)$ admits an action of $\QCoh(\Y)$, we have the canonical functor 
$$
\Upsilon_\Y: \QCoh(\Y) \longto \IndCoh(\Y)
$$
corresponding to the action on $\omega_\Y$.


\sssec{}

Let us now discuss ind-coherent sheaves on stacks (recall our convention of the term \virg{stack}: our stacks are all algebraic, quasi-compact, with affine diagonal, and laft).

\begin{prop}
For $\Y \in \Stk$, the DG category $\ICoh(\Y)$ is compactly generated by $\Coh(\Y)$, and self-dual by Serre duality.
\end{prop}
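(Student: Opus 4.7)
The plan is to reduce both claims to the scheme case via smooth descent. Choose a smooth surjective atlas $\pi: U \to \Y$ with $U \in \Aff_\aft$. Because $\Y$ has affine diagonal by our standing convention on $\Stk$, every term $U^n := U \times_\Y \cdots \times_\Y U$ of the Čech nerve $U^\bullet$ is again an affine scheme in $\Aff_\aft$, so the scheme-level results recalled in Section \ref{sssec:ICOH functoriality on schemes} and Section \ref{sssec:Serre duality} apply to each $U^n$.

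For compact generation, I would start from the smooth descent equivalence
$$\ICoh(\Y) \simeq \lim_{[n] \in \bDelta} \ICoh(U^n)$$
with transition maps given by $!$-pullbacks. Since $\pi$ is smooth, $\pi^!$ is continuous and conservative (by faithfully flat descent), so it admits a continuous left adjoint $\pi_!^\ICoh$. Because each $\ICoh(U^n)$ is compactly generated by $\Coh(U^n)$ and the right adjoints $\pi^!$ are continuous, the objects $\pi_!^\ICoh(\Coh(U))$ are compact in $\ICoh(\Y)$ and generate it. Smoothness of $\pi$ means $\pi^! \simeq \pi^* \otimes \omega_\pi[d]$ for $\omega_\pi$ a line bundle, whence $\pi_!^\ICoh$ differs from $\pi_*^\ICoh$ only by the twist $(- \otimes \omega_\pi^{-1}[-d])$; in particular, $\pi_!^\ICoh$ sends $\Coh(U)$ into $\Coh(\Y)$. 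Thus $\Coh(\Y)$ is a set of compact generators; conversely any $\F \in \Coh(\Y)$ satisfies $\pi^!\F \in \Coh(U) \subseteq \ICoh(U)^c$, and compactness of $\F$ follows from Barr--Beck monadicity along $\pi^! \dashv \pi_!^\ICoh$.

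For self-duality, passing to left adjoints in the cosimplicial diagram rewrites the limit as a colimit in $\DGCatcont$:
$$\ICoh(\Y) \simeq \uscolim{[n] \in \bDelta^\op} \ICoh(U^n).$$
Dualizing this presentation and applying Serre duality $\bbD^\Serre_{U^n}: \ICoh(U^n)^\vee \simeq \ICoh(U^n)$ at each level yields
$$\ICoh(\Y)^\vee \simeq \lim_{[n] \in \bDelta} \ICoh(U^n)^\vee \simeq \lim_{[n] \in \bDelta} \ICoh(U^n) \simeq \ICoh(\Y).$$
The key compatibility to verify is that Serre duality exchanges the $\ICoh$-pushforward transition maps of the colimit diagram with the $!$-pullback transition maps of the limit diagram; this is precisely the defining property of the $(\infty,2)$-functor $\Corr(\Sch_\aft)_{\all;\all}^{\proper} \to \DGCat^{2\text{-}\Cat}$ recalled in Section \ref{sssec:ICOH functoriality on schemes}, combined with the Gorenstein twists for the smooth face maps.

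The main obstacle is the bookkeeping of the cosimplicial coherence data: one must check that compact generation and Serre self-duality assemble coherently across all terms of the Čech nerve, including the shifts by $\omega_\pi[d]$ introduced by the non-properness of the atlas. These are exactly the theorems established in \cite{DG:Finiteness} for QCA stacks, a class containing our $\Stk$, and I would ultimately invoke that reference rather than redo the $(\infty,2)$-categorical manipulations from scratch.
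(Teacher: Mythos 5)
The paper does not actually prove this proposition: it sits in a section announced as a recapitulation of \cite{ICoh}, \cite{Book} and \cite{DG:Finiteness}, and the intended justification is the compact-generation and self-duality theorems of \cite{DG:Finiteness} for QCA stacks (which the paper's stacks are, being quasi-compact, almost of finite type, and with affine diagonal). So your closing move of deferring to that reference coincides with what the paper does. The problem is that the descent argument you sketch beforehand is not a correct proof, and it presents as formal a statement that is genuinely a theorem.

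The concrete gap is the assertion that $\pi^!$ admits a continuous left adjoint $\pi_!^{\ICoh}$ because it is ``continuous and conservative'': this is a non sequitur, and the left adjoint does not exist in general. Smoothness of $\pi$ gives $\pi^! \simeq \pi^{\ICoh,*}(-)\otimes\omega_{U/\Y}$ with $\pi^{\ICoh,*}$ \emph{left} adjoint to $\pi_*^{\ICoh}$; hence the twisted pushforward $\pi_*^{\ICoh}(-\otimes\omega_{U/\Y}^{-1})$ is the continuous \emph{right} adjoint of $\pi^!$, not its left adjoint. (The adjunction $f_*^{\ICoh}\dashv f^!$ you are implicitly invoking holds for \emph{proper} $f$, and an atlas is essentially never proper; already for $\pi\colon \pt \to BG$ the forgetful functor $\ICoh(BG)\to\Vect$ fails to preserve infinite products, so no left adjoint exists.) Consequently the limit $\lim_{\bDelta}\ICoh(U^\bullet)$ cannot be rewritten as a colimit over left adjoints, the objects ``$\pi_!^{\ICoh}(\Coh(U))$'' are undefined, and the Barr--Beck step for compactness of $\Coh(\Y)$ runs along a nonexistent adjunction. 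With the adjoints in the correct direction one does obtain \emph{generation}: conservativity of $\pi^!$ together with its continuous right adjoint shows that the essential image of the latter generates $\ICoh(\Y)$. But \emph{compactness} of coherent objects is exactly where the QCA hypothesis enters; in \cite{DG:Finiteness} it is deduced from the continuity of $\Gamma(\Y,-)$ on $\QCoh(\Y)$ and a reduction to quotient stacks $Z/GL_n$, not from \v{C}ech descent. The self-duality claim, by contrast, is essentially formal \emph{once} compact generation is known, since then $\ICoh(\Y)\simeq \Ind(\Coh(\Y))$ and $\bbD^{\Serre}_\Y$ identifies $\Coh(\Y)^{\op}$ with $\Coh(\Y)$; as written, your argument for it also rests on the unavailable colimit presentation. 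Keep the citation; drop or repair the sketch.
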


\begin{prop} [Corollary 4.3.8 of \cite{DG:Finiteness}]
If $\Y \in \Stk$ is bounded, $\QCoh(\Y)$ is rigid and in particular self-dual. Therefore, an object of $\QCoh(\Y)$ is compact if and only if it is perfect. 
\end{prop}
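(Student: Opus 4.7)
The plan is to establish three items in sequence: (i) $\QCoh(\Y)$ is compactly generated; (ii) the compact objects of $\QCoh(\Y)$ are precisely the perfect objects; and (iii) $\QCoh(\Y)$ is rigid, from which self-duality is automatic.

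For (i), I would use descent along a smooth atlas $f: U \to \Y$ with $U \in \Affaftevcoc$. Since $\Y \in \Stk$ has affine diagonal, the map $f$ is affine-schematic in every fiber, and in particular $f_*: \QCoh(U) \to \QCoh(\Y)$ is continuous and the pair $(f^*, f_*)$ is comonadic. By affineness of $U$ and boundedness (which ensures $\O_U$ is perfect, hence compact in $\QCoh(U)$), the usual generators of $\QCoh(U)$ are compact. One then shows that $f_*$ applied to a set of compact generators of $\QCoh(U)$ gives compact generators of $\QCoh(\Y)$; here compactness of $f_* K$ uses that $f^*$ preserves compact objects, which in turn relies on $f$ being of finite Tor-amplitude — this is precisely where the hypothesis that $\Y$ (equivalently, $U$) is bounded enters.

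For (ii), the inclusion $\Perf(\Y) \subseteq \QCoh(\Y)^c$ follows from the fact that the monoidal unit $\O_\Y$ is perfect (by boundedness) and hence, together with its shifts and the closure of perfect objects under finite colimits and retracts, yields compact objects. The converse, $\QCoh(\Y)^c \subseteq \Perf(\Y)$, is the subtle direction. Here I would argue that a compact object of $\QCoh(\Y)$ remains compact after restriction along the smooth cover $f$ (again using finite Tor-amplitude of $f$); on the affine scheme $U$, compact equals perfect by the classical Hopkins--Neeman--Thomason theorem; and perfection descends along a smooth cover with affine diagonal by the standard fpqc-local nature of perfection.

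For (iii), rigidity of a compactly generated symmetric monoidal DG category is equivalent to the conjunction: the unit is compact, and every compact object is dualizable (with compact dual). The unit $\O_\Y$ is compact by (ii). By (ii), compact objects coincide with perfect objects, and perfect objects in $\QCoh(\Y)$ are dualizable via $P \mapsto \CHom_{\O_\Y}(P, \O_\Y)$, with dual again perfect. Self-duality of $\QCoh(\Y)$ as a DG category then follows formally from rigidity, with duality datum given by the symmetric monoidal pairing $\otimes: \QCoh(\Y) \otimes \QCoh(\Y) \to \QCoh(\Y)$ composed with $\Gamma(\Y,-)$ on compact objects.

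The main obstacle is step (ii), specifically the inclusion $\QCoh(\Y)^c \subseteq \Perf(\Y)$: detecting perfection on a smooth cover requires controlling the Tor-amplitude globally, and without boundedness of $\Y$ (which forces $\O_\Y$ itself to be perfect) the argument fails — indeed, for unbounded stacks such as $\LY$ with $\Y$ quasi-smooth, the conclusion of the proposition is false, as discussed in the overview. This is exactly the content of \cite[Corollary 4.3.8]{DG:Finiteness}, and for a complete proof I would simply refer to \loccit.
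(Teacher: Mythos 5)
The paper offers no proof of this statement: it is quoted verbatim from \cite{DG:Finiteness}, so your sketch can only be measured against the argument there and against the surrounding text of the paper. Measured either way, it has a genuine gap at its foundation. Your step (i) asserts that $\QCoh(\Y)$ is compactly generated, obtained by pushing forward compact generators along an atlas $f: U \to \Y$. But the paper states, in the paragraph immediately following this proposition, that compact generation of $\QCoh(\Y)$ is \emph{not known} for a general bounded $\Y \in \Stk$ --- this is precisely why perfection is imposed as a separate hypothesis throughout the paper. Your descent argument does not close this gap: generation by the objects $f_*K$ amounts to the vanishing of their right orthogonal, and $\Maps_{\QCoh(\Y)}(f_*K, M)$ does not reduce along the $(f^*,f_*)$ adjunction in the direction you need, since $f_*$ is the right adjoint rather than the left one. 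Producing generators of this kind is exactly the content of the ``perfect stack'' condition of \cite{BFN}, an additional hypothesis and not a consequence of boundedness. Since your step (iii) explicitly invokes rigidity criteria for \emph{compactly generated} monoidal categories, and your step (ii) feeds into it, the whole chain collapses.

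There is a second, independent error: you claim $\O_\Y$ is compact because it is perfect. Perfect means dualizable, and a dualizable object is compact only once one already knows the unit is compact, so this is circular (the unit is always perfect). Compactness of $\O_\Y$ is the assertion that $\Gamma(\Y,-)$ is continuous, which is the substantive finiteness theorem of \cite{DG:Finiteness} for bounded QCA stacks in characteristic zero and is where the hypotheses genuinely enter. The intended logic also runs in the opposite direction from yours: one first establishes rigidity in the sense of Section \ref{sssec:definition-rigidity} --- a notion that does not presuppose compact generation --- by deducing dualizability of $\QCoh(\Y)$ from the Serre self-duality of $\IndCoh(\Y)$ together with the retraction $\Psi_\Y \circ \Xi_\Y \simeq \id$, available precisely because $\Y$ is bounded, and by obtaining continuity of $u^R = \Gamma(\Y,-)$ from the QCA finiteness result; only then does one conclude that compact objects coincide with dualizable ones, i.e.\ with perfect complexes. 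That is the force of the word ``Therefore'' in the statement.
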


In the situation above, it is not known whether $\QCoh(\Y)$ is compactly generated. However, such condition (almost always satisfied in practice!) is convenient for many manipulations, hence we include it \virg{by hand} in our main results by requiring our stacks to be perfect.

\ssec{Base-change}

Next, one would like to define push-forward functors of ind-coherent sheaves on laft prestacks, together with base-change isomorphisms. For this, one needs to find the correct $\infty$-category of correspondences of laft prestacks.
Indeed, unlike $!$-pullbacks, push-forwards are not to be expected to be defined (and continuous) for all maps between laft prestacks.

\sssec{}

The situation is neatly summarized by the following theorem, see \cite[Volume 2, Chapter 3, Theorem 5.4.3]{Book}.

\begin{thm} \label{thm:ICOH-corresp-book}
The assignment $\Y \squigto \ICoh(\Y)$ extends uniquely to an $\2$-functor 
\begin{equation} \label{eqn:ICOH-out-of-Corr-Prestacks-laft}
\ICoh: 
\Corr(\PreStk_\laft)_{\mathit{ind \on- inf \on- schem}; \all}^{\mathit{ind \on- inf \on- schem \& ind \on- proper}} 
\longto 
\DGCat^{2-\Cat}, 
\end{equation}
where the abbreviation $\virg{\mathit{ind \on- inf \on- schem}}$ stands for \emph{ind-inf-schematic}.
\end{thm}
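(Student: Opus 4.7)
The plan is to construct the $\2$-functor in stages, following \cite[Chapter V]{Book}, exploiting the universal property of the correspondence $\2$-category to reduce the problem to producing compatible pullbacks, pushforwards, base-change isomorphisms, and adjunction data witnessing the admissible class. First, I would take as input the classical $\2$-functor on schemes almost of finite type recalled in Section \ref{sssec:ICOH functoriality on schemes}, which already encodes $!$-pullback, $*$-pushforward, base change along Cartesian squares, and the adjunction $f_*^\ICoh \dashv f^!$ for proper $f$; the admissible $2$-cells of the correspondence category record precisely the unit and counit of this adjunction.

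Next, I would extend the pullback side to all laft prestacks by right Kan extension along the inclusion $(\Affaftevcoc)^{\op} \hookrightarrow \PreStk_\laft^{\op}$, yielding $\ICoh^! : \PreStk_\laft^{\op} \to \DGCat$. This step is essentially formal and recovers the expected value of $\ICoh$ on algebraic stacks (via descent), on de Rham prestacks, and on formal completions. The substantive work lies in extending the $*$-pushforward along ind-inf-schematic maps. The crucial structural input is that every inf-scheme $\X$ admits a presentation as a filtered colimit $\colim_\alpha S_\alpha$ of schemes in $\Sch_\aft$ along infinitesimal closed embeddings, inducing an equivalence $\ICoh(\X) \simeq \colim_\alpha \ICoh(S_\alpha)$ with transitions given by $\ICoh$-pushforward along the embeddings. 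This description lets one define $f_*^\ICoh$ for an inf-schematic $f$ by compatibility with presentations on source and target, and then ind-extend for ind-inf-schematic $f$; base change along a Cartesian square with one leg ind-inf-schematic and the other arbitrary reduces, via cofinality, to the already-established scheme-level base change.

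With these ingredients in hand, I would assemble the desired $\2$-functor by invoking the universal property of the correspondence category $\Corr(\PreStk_\laft)_{\mathit{ind \on- inf \on- schem}; \all}^{\mathit{ind \on- inf \on- schem \& ind \on- proper}}$ developed in \cite[Chapter V.1]{Book}: a $\2$-functor out of such a correspondence $\2$-category is uniquely determined by a functor on the underlying $\1$-category equipped with left adjoints along the admissible class and satisfying base-change identities, and the uniqueness claim in the theorem follows from the uniqueness of adjoints. The hard part will be the pushforward extension: proving that $f_*^\ICoh$ for an ind-inf-schematic $f$ is independent of the chosen presentation, continuous, and satisfies base change in the claimed generality. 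This rests on the deformation theory of $\ICoh$ on inf-schemes developed in earlier chapters of \cite{Book}, together with cofinality arguments reducing everything to the scheme case. The ind-proper adjunction $f_*^\ICoh \dashv f^!$ is then verified by checking compactness preservation on the generating subcategories $(i_\alpha)_*^\ICoh(\Coh(S_\alpha)) \subset \ICoh(\X)$ and invoking the scheme-level adjunction term-by-term.
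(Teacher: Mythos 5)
The paper does not actually prove this theorem: it is imported verbatim from \cite[Chapter V.2, Theorem 6.1.5]{Book}, so there is no in-paper argument to compare against. Your outline is a faithful summary of the strategy used in that reference: the scheme-level functor out of $\Corr(\Sch_\aft)$ as input, right Kan extension of $\ICoh^!$ to $\PreStk_\laft$, the colimit presentation of (ind-)inf-schemes by schemes along nilpotent embeddings to define and control the pushforward and its base change, and the universal property of the $\2$-category of correspondences (with the admissible class recording the ind-proper adjunction) to assemble everything and obtain uniqueness. The one imprecision is in the last step: the content of the ind-proper adjunction is not that $f_*^\ICoh$ preserves compactness (that only gives continuity of \emph{some} right adjoint) but the identification of that right adjoint with $f^!$; in \cite{Book} this is deduced from the proper scheme-level adjunction by passing to the colimit presentations, and it is precisely the hypothesis required to invoke the $\2$-categorical extension theorem of \cite[Chapter V.1]{Book}.
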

Translated into plain language, the theorem states that:
\begin{itemize}
\item
$(*,\ICoh)$-push-forwards are defined only for \emph{ind-inf-schematic} maps and have base-change isomorphisms against $!$-pullbacks;
\item
 if $f$ ind-inf-schematic and ind-proper, then $f_*^\ICoh$ is \emph{left} adjoint to $f^!$.
\end{itemize}  

\sssec{}

Luckily, in this paper we do not need such high level of generality (whence, we will not need to recall the definitions of those words). We only need to be aware of the following fact: if $\X \to \Y \to \Z$ is a string in $\Stk$ with $\X \to \Y$ schematic (and proper), then the resulting map $\Z^\wedge_\X \to \Z^\wedge_\Y$ is ind-inf-schematic (and ind-proper).
This yields:

\begin{lem}
The assignment
$$
\Arr(\Stk)
\longto
\PreStk_\laft, 
\hspace{.4cm}
[\Y \to \Z]
\squigto \Z^\wedge_\Y
$$
extends to an $\2$-functor 
\begin{equation} \label{eqn:funny infty 2 functor}
\Corr(\Arr(\Stk))_{\schem; \all}^{\schem \& \proper}
\longto
\Corr(\PreStk_\laft)_{\mathit{ind \on- inf \on- schem}; \all}^{\mathit{ind \on- inf \on- schem \& ind \on- proper}},
\end{equation}
where a morphism $[\X \to \Y] \to [\U \to \V]$ in $\Arr(\Stk)$ is said to be schematic (or proper) if so is the map $\X \to \U$. 
\end{lem}

\begin{proof}
The only thing to check is the $1$-categorical composition of correspondences. This boils down to the following fact (whose proof, left to the reader, is a diagram chase): for a cospan
$$
[\X \to \Y] \to [\U \to \V] \leftto [\X' \to \Y']
$$
in $\Arr(\PreStk)$, the resulting map
$$
\Y^\wedge_\X \ustimes{\V^\wedge_\U} (\Y')^\wedge_{\X'}
\longleftarrow
(\Y \times_\V \Y')^\wedge_{\X \times_{\U} \X'}
$$
is an isomorphism.
\end{proof}

\begin{thm} \label{thm:ICOH-corresp-formal completions}
The assignment $[\Y \to \Z] \squigto \ICoh(\Z^\wedge_\Y)$ underlies an $\2$-functor 
\begin{equation} \label{eqn:ICOH-out-of-Corr-formal-completions}
\ICoh: 
\Corr(\Arr(\Stk))_{\schem; \all}^{\schem \& \proper}
\longto 
\DGCat^{2-\Cat}.
\end{equation}
\end{thm}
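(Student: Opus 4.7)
The plan is to deduce this from Theorem \ref{thm:ICOH-corresp-book} by factoring the desired $\2$-functor as a composition
\[
\Corr(\Arr(\Stk))_{\schem; \all}^{\schem \& \proper}
\xrightarrow{\,\Psi\,}
\Corr(\PreStk_\laft)_{\mathit{ind \on- inf \on- schem}; \all}^{\mathit{ind \on- inf \on- schem \& ind \on- proper}}
\xrightarrow{\ICoh}
\DGCat^{2-\Cat},
\]
where $\Psi$ is induced by the formal-completion assignment $[\Y \to \Z] \squigto \Z^\wedge_\Y$. Since the second arrow is provided by \thmref{thm:ICOH-corresp-book}, all the content is in constructing $\Psi$ as an $\2$-functor of correspondence categories respecting the admissibility patterns.

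First I would construct the underlying $\1$-functor $\Arr(\Stk) \to \PreStk_\laft$. By definition $\Z^\wedge_\Y = \Z \times_{\Z_\dR} \Y_\dR$, so a commutative square $[\X \to \Y] \to [\U \to \V]$ in $\Arr(\Stk)$ produces a canonical map $\Y^\wedge_\X \to \V^\wedge_\U$ by functoriality of fiber products and of the de Rham construction. Since $\PreStk_\laft$ is closed under fiber products and under $\Y \squigto \Y_\dR$, the target $\Y^\wedge_\X$ lies in $\PreStk_\laft$, as claimed.

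The main step is verifying that this assignment sends the admissibility classes of the left-hand correspondence category into those on the right. Concretely, for a string $\X \to \Y \to \Z$ in $\Stk$ with $\X \to \Y$ schematic (resp.\ schematic and proper), one must check that $\Z^\wedge_\X \to \Z^\wedge_\Y$ is ind-inf-schematic (resp.\ ind-inf-schematic and ind-proper). This reduces, via the cartesian square
\[
\begin{tikzpicture}[scale=1.3]
\node (a) at (0,1) {$\Z^\wedge_\X$};
\node (b) at (3,1) {$\X_\dR$};
\node (c) at (0,0) {$\Z^\wedge_\Y$};
\node (d) at (3,0) {$\Y_\dR$,};
\path[->,font=\scriptsize,>=angle 90]
(a) edge (b) (c) edge (d) (a) edge (c) (b) edge (d);
\end{tikzpicture}
\]
to the analogous assertion for the map $\X_\dR \to \Y_\dR$, which in turn follows from the fact that $\Y_\dR \to \Y$ is a nil-isomorphism: a schematic (resp.\ schematic and proper) map of stacks induces an ind-inf-schematic (resp.\ ind-inf-schematic and ind-proper) map after passing to de Rham prestacks. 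I expect this translation of admissibility classes to be the only non-formal step, and it is what the excerpt asserts should hold.

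With these verifications in hand, $\Psi$ is obtained by the general principle that a pullback-preserving $\1$-functor between $\infty$-categories compatible with the respective admissibility classes upgrades canonically to an $\2$-functor of correspondence categories (see \cite[Chapter V.1]{Book}). Composing with \thmref{thm:ICOH-corresp-book} then produces the desired $\2$-functor $\ICoh \circ \Psi$, whose value on $[\Y \to \Z]$ is $\ICoh(\Z^\wedge_\Y)$, whose horizontal arrows encode $!$-pullback, whose vertical arrows encode $\ICoh$-pushforward along formal completions of schematic maps, and for which the proper-adjunction property of $\ICoh$ gives the $(f_*^\ICoh, f^!)$-adjunction in the schematic and proper case.
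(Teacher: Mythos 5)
Your proposal is correct and takes essentially the same route as the paper: the paper likewise proves this by upgrading $[\Y \to \Z] \squigto \Z^\wedge_\Y$ to an $\2$-functor of correspondence categories --- citing precisely the fact that a schematic (resp.\ schematic and proper) map $\X \to \Y$ induces an ind-inf-schematic (resp.\ ind-proper) map $\Z^\wedge_\X \to \Z^\wedge_\Y$ --- and then composing with \thmref{thm:ICOH-corresp-book}. Your cartesian-square reduction to the de Rham level is a justification of that fact which the paper simply asserts without proof.
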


\begin{proof}
This $\2$-functor is the composition of \eqref{eqn:funny infty 2 functor} with \eqref{eqn:ICOH-out-of-Corr-Prestacks-laft}.
\end{proof}

\begin{example}
For $\Y \in \Stk$, consider the tautological proper arrow
$[\Y \to \Y] \to [\Y \to \pt]$.
Under the equivalence $\ICoh(\pt^\wedge_\Y) \simeq \Dmod(\Y)$, the resulting adjunction
$\ICoh(\Y) \rightleftarrows \Dmod(\Y)$, coming from the $\2$-categorical structure on correspondences, is the usual induction/forgetful adjunction $(\ind_R, \oblv_R)$.
\end{example}

\ssec{Nil-isomorphisms and self-duality}

\sssec{}

A map of laft prestacks is said to be a \emph{nil-isomorphism} if it is an isomorphism at the reduced level. If a map is a nil-isomorphism, then the resulting $\ICoh$-pullback is conservative, see \cite[Volume 2, Chapter 3, Proposition 3.1.2]{Book}.

\medskip

As a main example consider the following: for $f: \Y \to \Z$ a map in $\Stk$, the natural map $\primef: \Y \to \Z^\wedge_\Y$ is a nil-isomorphism. Thus, we obtain the following statement.

\begin{prop} \label{prop: monadic adjunction ICoh}
In the situation above, the functors $(\primef)_*^\ICoh$ and $(\primef)^!$ form a monadic adjunction
\begin{equation} \label{eqn:adjunction-ICoh-on-formal-completion}
\begin{tikzpicture}[scale=1.5]
\node (a) at (0,1) {$\ICoh(\Y)$};
\node (b) at (3,1) {$\ICoh(\Z^\wedge_{\Y})$.};
\path[->,font=\scriptsize,>=angle 90]
([yshift= 1.5pt]a.east) edge node[above] {$(\primef)_*^\ICoh $} ([yshift= 1.5pt]b.west);
\path[->,font=\scriptsize,>=angle 90]
([yshift= -1.5pt]b.west) edge node[below] {$ (\primef)^! $} ([yshift= -1.5pt]a.east);
\end{tikzpicture}
\end{equation}
In particular, $\ICoh(\Z^\wedge_\Y)$ is compactly generated by $(\primef)_*^\ICoh(\Coh(\Y))$. 
\end{prop}

\sssec{}

By the definition of the universal envelope of a Lie algebroid (see \cite[Volume 2, Chapter 8, Section 4.2]{Book}), we may write:
\begin{equation} \label{eqn: U of tang for intro}
\ICoh(\Z^\wedge_{\Y})
\simeq
\U(\Tang_{\Y/\Z}) \mod (\ICoh(\Y)).
\end{equation}
Recall that a compactly generated DG category is automatically dualizable (for a proof, see \cite[Proposition 2.3.1]{DGCat}). The next result describes the dual of $\ICoh(\Z^\wedge_{\Y})$.

\begin{prop} \label{prop:ICOH-formal-completion-self-dual}
In the situation above, the DG category $\ICoh(\Z^\wedge_\Y)$, which is automatically dualizable by the above proposition, is self-dual.
\end{prop}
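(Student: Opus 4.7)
The plan is to construct the self-duality as an extension of Serre duality on $\ICoh(\Y)$ along the monadic adjunction (\ref{eqn:adjunction-ICoh-on-formal-completion}). Since $\ICoh(\Z^\wedge_\Y)$ is compactly generated (by the preceding proposition), it is automatically dualizable, and self-duality amounts to producing an anti-equivalence $\bbD: \Coh(\Z^\wedge_\Y)^\op \to \Coh(\Z^\wedge_\Y)$ which ind-completes to a pairing of the desired form.

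First, I would pin down the compact objects. Because $\primef: \Y \to \Z^\wedge_\Y$ is a nil-isomorphism, the pullback $(\primef)^!$ is continuous and conservative, so its left adjoint $(\primef)_*^\ICoh$ preserves compactness. Hence $(\primef)_*^\ICoh(\Coh(\Y))$ sits inside $\Coh(\Z^\wedge_\Y)$ and generates the latter under finite colimits and retracts. Morphism spaces among such generators are controlled by the monad $\U(\Tang_{\Y/\Z}) = (\primef)^! \circ (\primef)_*^\ICoh$ via
$$
\Maps\bigl((\primef)_*^\ICoh c,\, (\primef)_*^\ICoh c'\bigr)
\simeq \Maps\bigl(c,\, \U(\Tang_{\Y/\Z})(c')\bigr),
\qquad c, c' \in \Coh(\Y).
$$

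Next, I would transport Serre duality across this monadic presentation. The involution $\bbD^\Serre_\Y: \Coh(\Y)^\op \to \Coh(\Y)$ from Section \ref{sssec:Serre duality} is the starting point; the point is that it intertwines the left $\U(\Tang_{\Y/\Z})$-action with a \emph{right} action of the same Lie-algebroid envelope. This is where the Hopf-type structure on $\U(\Tang_{\Y/\Z})$ (the antipode coming from the anchor plus Lie-algebroid bracket, as constructed in \cite[Chapter IV.4]{Book}) enters: it provides a canonical anti-automorphism of $\U(\Tang_{\Y/\Z})$ swapping left and right modules. Combining this antipode with $\bbD^\Serre_\Y$ produces the sought-after anti-equivalence $\bbD: \Coh(\Z^\wedge_\Y)^\op \simeq \Coh(\Z^\wedge_\Y)$, and $\Ind$-extending $\bbD$ gives the self-duality of $\ICoh(\Z^\wedge_\Y)$.

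Equivalently, and perhaps more invariantly, one can deduce the statement directly from Theorem \ref{thm:ICOH-corresp-formal completions}: the diagonal $\Delta: \Z^\wedge_\Y \to \Z^\wedge_\Y \times \Z^\wedge_\Y$ is ind-inf-schematic and ind-proper (being a nil-isomorphism onto its image), and the structure map factors through $\Y$, which is proper to the base up to the stack $\Z$. Feeding these data into the $\2$-functor (\ref{eqn:ICOH-out-of-Corr-formal-completions}) exhibits $\Z^\wedge_\Y$ as a self-dual object, and the resulting pairing is the one described above.

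The main obstacle will be the coherent construction of the antipode on $\U(\Tang_{\Y/\Z})$ in the derived/$\infty$-categorical setting, and verifying that the resulting anti-involution $\bbD$ squares to the identity. Both issues are addressed, albeit implicitly, by the self-duality machinery for $\ICoh$ on ind-inf-schemes in \cite{Book}; in our setting the proof can either invoke that machinery or be carried out by hand using the PBW filtration on $\U(\Tang_{\Y/\Z})$, which reduces the statement to the (already known) self-duality of $\ICoh$ on the sequence of nilpotent thickenings exhausting $\Z^\wedge_\Y$.
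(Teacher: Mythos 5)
Your proposal takes a genuinely different route from the paper, but both of its variants have gaps at the same critical point. The paper's proof does not transport Serre duality through the monad at all: it writes down an explicit coevaluation $\Vect \xto{(p_{\Z^\wedge_\Y})^!} \ICoh(\Z^\wedge_\Y) \xto{\Delta_*^\ICoh} \ICoh(\Z^\wedge_\Y)^{\otimes 2}$ and an evaluation that factors as $\Delta^!$ followed by pushforward to $\Y_\dR$ followed by the \emph{renormalized} de Rham sections functor $\Gamma(\Y_\dR,-)_\ren$, and then checks the triangle identity smooth-locally on $\Z$. The routing through $\Y_\dR$ is not cosmetic: since $\Y$ is a stack, the naive global-sections functor on $\Dmod(\Y)$ is not the dual of $(p_{\Y_\dR})^!$, and the pushforward along $\Z^\wedge_\Y \to \pt$ is not among the functors supplied by Theorem \ref{thm:ICOH-corresp-formal completions}. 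Your second argument ("feed the diagonal into the $\2$-functor (\ref{eqn:ICOH-out-of-Corr-formal-completions})") founders exactly here: the correspondence formalism gives you the coevaluation (the diagonal is inf-schematic because $\Y$ has affine diagonal — note it is \emph{not} a nil-isomorphism onto its image, and for a stack it is not ind-proper), but it does not hand you a continuous evaluation, which is the whole content of the proposition.

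Your first argument has a separate gap: the "antipode" on $\U(\Tang_{\Y/\Z})$ is asserted rather than constructed. The universal envelope of a Lie algebroid is not a Hopf algebra; already for $\Tang_X$ on a smooth scheme the opposite of $\fD_X$ is identified with $\fD_X$ only after twisting by $\omega_X$, so the left/right swap must be packaged together with $\bbD^\Serre_\Y$ (e.g.\ via the inversion of the infinitesimal groupoid and base change), and making this coherent in the $\infty$-categorical setting is essentially the same amount of work as the proposition itself. What your approach would buy, if completed, is an explicit description of the induced involution on compact objects — which the paper only records \emph{after} the proposition, as a consequence: $\bbD^\Serre_{\Z^\wedge_\Y}\bigl((\primef)_*^\ICoh C\bigr) \simeq (\primef)_*^\ICoh(\bbD^\Serre_\Y C)$. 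As written, though, neither variant closes the argument.
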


\begin{proof}
We will exhibit two functors and prove they form a self-duality datum for $\ICoh(\Z^\wedge_\Y)$.
We set:
$$
\coev: \Vect 
\xto{\; (p_{\Z^\wedge_\Y})^! \; } 
\ICoh(\Z^\wedge_\Y) 
\xto{ \; \Delta_*^{\ICoh} \; } 
\ICoh(\Z^\wedge_\Y \times \Z^\wedge_\Y)
\simeq
 \ICoh(\Z^\wedge_\Y)  \otimes \ICoh(\Z^\wedge_\Y),
$$
where the second functor is continuous as $\Delta: \Z^\wedge_\Y \to \Z^\wedge_\Y \times \Z^\wedge_\Y$ is inf-schematic\footnote{The notion of inf-schematic morphism is introduced in \cite[Volume 2, Part 1,Chapter 2]{Book}.} (since $\Y$ has schematic diagonal) and the last equivalence holds because $\ICoh(\Z^\wedge_\Y) $ is dualizable.

As for the functor going the opposite direction, we set:
$$
\ev: 
 \ICoh(\Z^\wedge_\Y)  \otimes \ICoh(\Z^\wedge_\Y)
\xto{ \; \Delta^! \; } 
 \ICoh(\Z^\wedge_\Y)  
\xto{\; \pi_*^\ICoh \; } 
\Dmod(\Y)
\xto{\; \Gamma(\Y_\dR, -)_{\ren} \; } 
\Vect,
$$
where $\pi: \Z^\wedge_\Y \to \Y_\dR$ is the tautological inf-schematic map and $\Gamma(\Y_\dR, -)_{\ren}$ is the functor of \emph{renormalized de Rham global sections}, see \citep{DG:Finiteness}. By definition, $\Gamma(\Y_\dR, -)_{\ren}$ is the dual of $(p_{\Y_\dR})^!$ under the standard self-duality of $\Dmod(\Y)$ and $\Vect$.

After a straightforward diagram chase, proving that these two functors yield a self-duality datum boils down to proving that the functor
$$
\ICoh(\Z^\wedge_\Y)  
\xto{\;\;(\pi \times \id)_*^{\ICoh}\;\;}
\Dmod(\Y) \otimes \ICoh(\Z^\wedge_\Y)   
\xto{\;\Gamma(\Y_\dR, -)_{\ren} \otimes \id \;}
\ICoh(\Z^\wedge_\Y)  
$$
is the identity. It suffices to check this smooth-locally on $\Z$. Then we can assume that $\Z=Z$ is a scheme, in which case 
\begin{equation} \label{eqn:pippo}
\ICoh(Z^\wedge_\Y) \simeq \ICoh(Z) \otimes_{\Dmod(Z)} \Dmod(\Y)
\end{equation} 
and the assertion is obvious (the functor in question being dual to the identity).
To prove \eqref{eqn:pippo}, first use the dualizability of $\ICoh(Z)$ as a $\Dmod(Z)$-module (proven in \cite[Corollary 4.2.2]{shvcatHH}) to reduce to the case where $\Y$ is also a scheme; then combine \cite[Proposition 3.1.2]{AG2} with the $1$-affineness of $Z_\dR$, which allows us to use \cite[Proposition 3.1.9]{ShvCat}.
\end{proof}

\sssec{}

Unraveling the construction, the two functors (\ref{eqn:adjunction-ICoh-on-formal-completion}) are dual to each other under the self-duality of the above proposition and the standard self-duality of $\ICoh(\Y)$.
Consequently, the Serre involution 
$$
\bbD_{\Z^\wedge_\Y}^\Serre: 
(\ICoh(\Z^\wedge_\Y)^{\cpt})^\op
\xto{\;\;\simeq\;\;}
\ICoh(\Z^\wedge_\Y)^{\cpt}
$$
sends $(\primef)_*^\ICoh(C) \squigto (\primef)_*^\ICoh (\bbD_\Y^\Serre C )$.

\section{$\ICoh_0$ in the bounded case} \label{sec:ICoh-zero-bounded}

We start this section by officially defining the DG category $\ICoh_0(\Y \to \Z^\wedge_\Y)$ attached to a map of stacks $\Y \to \Z$, with $\Y \in \Stkevcoc$. A crucial condition to make this DG category manageable is the perfection of the relative cotangent complex $\LL_{\Y/\Z}$. Another useful condition to impose is the perfection of $\Y$ itself: as we show below, this makes $\ICoh_0(\Y \to \Z^\wedge_\Y)$ compactly generated.

Thus, for simplicity, we will restrict our attention to stacks that are bounded perfect and lfp. It will then be immediately clear that the assignment $[\Y \to \Z] \squigto \ICoh_0(\Y \to \Z^\wedge_\Y)$ underlies a functor 
\begin{equation} \label{funct: ICOHzero as a functor out of formalmod-opposite}
\ICoh_0:
\bigt{ 
\Arr(\Stkperfevcoclfp) 
}^\op
\longto
\DGCat.
\end{equation}
We shall extend such functor to an $\2$-functor out of an $ \2$-category of correspondences, see (\ref{eqn:ICOH-zero-out-of-corresp-ev-coc-case}).
We will also discuss descent for $\ICoh_0$, as well as its behaviour under tensoring up over $\QCoh$.

\ssec{Definition and first properties} \label{ssec:Defn-ICohzero}

In this section, we define the DG category $\ICoh_0(\Y \to \Z^\wedge_\Y)$ attached to the nil-isomorphism $\primef: \Y \to \Z^\wedge_\Y$ and discuss some generalities.

\begin{defin} \label{sssec:the first definition}
Let $f: \Y \to \Z$ be a morphism in $\Stk$ with $\Y$ bounded.
We define $\ICoh_0(\Y \to \Z^\wedge_\Y)$ as the DG category sitting in the pull-back square
\begin{equation} \label{diag:definition-ICoh0}
\begin{tikzpicture}[scale=1.5]
\node (M) at (0,1.2) {$\ICoh_0(\Y \to \Z^\wedge_\Y)$ };
\node (IM) at (0,0) {$\ICoh(\Z^\wedge_\Y)$};
\node (N) at (3,1.2) {$\QCoh(\Y)$};
\node (IN) at (3,0) {$\ICoh(\Y).$};
\path[right hook->,font=\scriptsize,>=angle 90]
(M.south) edge node[right] { $\iota$ } (IM.north);
\path[right hook->,font=\scriptsize,>=angle 90]
(N.south) edge node[right] { $\Upsilon_\Y$ } (IN.north);
\path[->,font=\scriptsize,>=angle 90]
(M.east) edge node[above] { $(\primef)^{!,0}$ } (N.west);
\path[->,font=\scriptsize,>=angle 90]
(IM.east) edge node[above] { $(\primef)^!$ } (IN.west);
\end{tikzpicture}
\end{equation}

\end{defin}

\begin{rem}
The definition is taken from \cite{AG2}, with the proviso that \cite{AG2} assumed quasi-smoothness of the stacks involved and used the functor $\Xi_\Y$ in place of $\Upsilon_\Y$ (those two functors differ by a shifted line bundle in the quasi-smooth case).
\end{rem}

\begin{rem}
Since $\Upsilon_\Y$ is (symmetric) monoidal, it is clear that the tensor product $\stackrel ! \otimes$ on $\ICoh(\Z^\wedge_\Y)$ preserves the subcategory $\ICoh_0(\Z^\wedge_\Y)$. In other words, $\iota$ is the inclusion of a symmetric monoidal subcategory.
\end{rem}

\sssec{Warning} \label{sssec:warning-abuse-notation}

We will abuse notation and write $\ICoh_0(\Z^\wedge_\Y)$ instead of the more precise $\ICoh_0 (\Y \to \Z^\wedge_\Y)$. Observe that the latter category really depends on the formal moduli problem $\Y \to \Z^\wedge_\Y$, and not just on $\Z^\wedge_\Y$: indeed, $\Z^\wedge_\Y$ is insensitive to any derived or non-reduced structure on $\Y$, while $\ICoh_0 (\Y \to \Z^\wedge_\Y)$ is not.

\begin{prop} \label{prop:monadic-descrption-ICoh_0}

Let $f: \Y \to \Z$ be a map in $\Stk$, with $\Y$ bounded. Assume that the relative cotangent complex $\LL_{\Y/\Z}$ is \emph{perfect}.
Then the pullback diagram (\ref{diag:definition-ICoh0}) is left adjointable, i.e., the horizontal arrows admit left adjoints and the resulting lax-commutative diagram 
\begin{equation} \label{diag:definition-ICoh0-left-adjoints}
\begin{tikzpicture}[scale=1.5]
\node (M) at (0,1.2) {$\ICoh_0(\Z^\wedge_\Y)$ };
\node (IM) at (0,0) {$\ICoh(\Z^\wedge_\Y)$};
\node (N) at (3,1.2) {$\QCoh(\Y)$};
\node (IN) at (3,0) {$\ICoh(\Y)$};
\path[right hook->,font=\scriptsize,>=angle 90]
(M.south) edge node[right] { $\iota$ } (IM.north);
\path[right hook->,font=\scriptsize,>=angle 90]
(N.south) edge node[right] { $\Upsilon_\Y$ } (IN.north);
\path[<-,font=\scriptsize,>=angle 90]
(M.east) edge node[above] { $(\primef)_{*,0}$ } (N.west);
\path[<-,font=\scriptsize,>=angle 90]
(IM.east) edge node[above] { $(\primef)_*^\ICoh$ } (IN.west);
\end{tikzpicture}
\end{equation}
is actually commutative.
\end{prop}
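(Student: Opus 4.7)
The plan is to construct a candidate left adjoint and then reduce both its existence and the commutativity of (\ref{diag:definition-ICoh0-left-adjoints}) to a single ``image preservation'' statement for the monad $\U(\Tang_{\Y/\Z})$. Since $\Y$ is bounded, $\Upsilon_\Y$ is fully faithful with continuous right adjoint $\Phi_\Y$, so it exhibits $\QCoh(\Y)$ as a reflective subcategory of $\ICoh(\Y)$. The pullback defining $\ICoh_0(\Z^\wedge_\Y)$ then shows that $\iota$ is likewise fully faithful with a continuous right adjoint $\iota^R$, so I would set
$$
(\primef)_{*,0} \;:=\; \iota^R \circ (\primef)_*^{\ICoh} \circ \Upsilon_\Y.
$$
Once this functor is shown to satisfy $\iota \circ (\primef)_{*,0} \simeq (\primef)_*^{\ICoh} \circ \Upsilon_\Y$ (which is precisely the commutativity of (\ref{diag:definition-ICoh0-left-adjoints})), its left adjointness to $(\primef)^{!,0}$ follows formally from the already-established adjunction $(\primef)_*^{\ICoh} \dashv (\primef)^!$ together with the full faithfulness of $\iota$ and $\Upsilon_\Y$.

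The commutativity, in turn, amounts to showing that the unit of $\iota \dashv \iota^R$ is an isomorphism on objects of the form $(\primef)_*^{\ICoh}(\Upsilon_\Y(Q))$, i.e.\ that $(\primef)_*^{\ICoh}(\Upsilon_\Y(Q))$ already lies in $\ICoh_0(\Z^\wedge_\Y)$. Unwinding the defining pullback square, this is the assertion that $(\primef)^!(\primef)_*^{\ICoh}(\Upsilon_\Y(Q))$ lies in the essential image of $\Upsilon_\Y$ for every $Q \in \QCoh(\Y)$. Since by construction $(\primef)^! \circ (\primef)_*^{\ICoh}$ is the monad $\U(\Tang_{\Y/\Z})$ on $\ICoh(\Y)$, the whole proposition reduces to the following claim:
$$
\U(\Tang_{\Y/\Z}) \text{ preserves the full subcategory } \Upsilon_\Y(\QCoh(\Y)) \subseteq \ICoh(\Y).
$$
This is the only genuine input, and it is where the hypothesis that $\LL_{\Y/\Z}$ is perfect enters.

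To establish preservation, I would argue via the PBW filtration of $\U(\Tang_{\Y/\Z})$, whose $n$-th associated graded endofunctor is given by $\sotimes$ with $\Sym^n(\Tang_{\Y/\Z}[1])[-n]$. Perfection of $\LL_{\Y/\Z}$ implies that $\Tang_{\Y/\Z}$, and hence each $\Sym^n(\Tang_{\Y/\Z}[1])[-n]$, is a perfect object of $\QCoh(\Y)$; as an object of $\ICoh(\Y)$ it is therefore $\Upsilon_\Y(P_n)$ for a perfect $P_n \in \QCoh(\Y)$. The $\QCoh(\Y)$-linearity of $\sotimes$ together with the unit property $\omega_\Y \sotimes \omega_\Y \simeq \omega_\Y$ gives
$$
\Upsilon_\Y(P_n) \sotimes \Upsilon_\Y(Q) \;\simeq\; \Upsilon_\Y(P_n \otimes Q),
$$
so each graded piece preserves $\Upsilon_\Y(\QCoh(\Y))$. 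Because $\Upsilon_\Y(\QCoh(\Y))$ is closed under colimits in $\ICoh(\Y)$ (the essential image of the colimit-preserving fully faithful functor $\Upsilon_\Y$), an induction on PBW degree followed by passage to the colimit gives the preservation.

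The main obstacle I anticipate is not conceptual but bookkeeping: reconciling the PBW filtration of $\U(\Tang_{\Y/\Z})$ as a filtered colimit of successive extensions with the closure of $\Upsilon_\Y(\QCoh(\Y))$ under such extensions and colimits, in a manner natural enough to assemble into the required functor on all of $\QCoh(\Y)$ (not merely on individual objects). Once this is in place, the existence of $(\primef)_{*,0}$, its adjoint property, and the commutativity of (\ref{diag:definition-ICoh0-left-adjoints}) fall out of the construction with no further work.
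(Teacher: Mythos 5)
Your proposal is correct and takes essentially the same route as the paper: reduce everything to showing that the monad $\U(\Tang_{\Y/\Z}) = (\primef)^! \circ (\primef)_*^\ICoh$ preserves the subcategory $\Upsilon_\Y(\QCoh(\Y)) \subseteq \ICoh(\Y)$, and verify this on the associated graded of the PBW filtration using perfection of $\LL_{\Y/\Z}$. (One cosmetic point: the $n^{th}$ associated graded piece is the functor $\Sym^n(\Tang_{\Y/\Z}) \sotimes - \,$, with $\Sym^n(\Tang_{\Y/\Z}) \simeq \Upsilon_\Y\bigt{\Sym^n(\TangQ_{\Y/\Z})}$, rather than $\Sym^n(\Tang_{\Y/\Z}[1])[-n]$, but this does not affect your argument.)
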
 

\begin{proof}
We just need to verify that the functor
$$
(\primef)_*^\ICoh \circ \Upsilon_\Y:
\QCoh(\Y)
\longto
\ICoh(\Z^\wedge_\Y)
$$
lands inside $\ICoh_0(\Z^\wedge_\Y)$. 
It suffices to show that the monad $\U(\Tang_{\Y/\Z}) := (\primef)^! \circ (\primef)_*^\ICoh$ on $\ICoh(\Y)$ preserves the subcategory $\Upsilon_\Y(\QCoh(\Y))$. 
We proceed as in \cite[Proposition 3.2.3]{AG2}.
Since $\U(\Tang_{\Y/\Z}) $ admits a canonical non-negative filtration (the PBW filtration, see \cite[Volume 2, Chapter 9, Theorem 6.1.2]{Book}), it suffices to check the assertion for each $n^{th}$ associated graded piece. Since $\LL_{\Y/\Z}$ is perfect, the latter is the functor of tensoring with 
$$
\Sym^n(\Tang_{\Y/\Z})
\simeq
\Upsilon_\Y \bigt{ \Sym^n( \TangQ_{\Y/\Z} ) },
$$
where $\TangQ_{\Y/\Z}$ is the dual of $\LL_{\Y/\Z}$ in $\QCoh(\Y)$.
It is clear that tensoring with $\Upsilon_\Y \bigt{ \Sym^n( \TangQ_{\Y/\Z} ) }$ preserves $\Upsilon_\Y(\QCoh(\Y))$. 
\end{proof}

\sssec{} \label{sssec: monade adjunction per intro}

Let us assume, as in the above proposition, that $f: \Y \to \Z$ is a map in $\Stk$ with $\Y$ bounded and with $\LL_{\Y/\Z}$ perfect.
Since $(\primef)^{!,0}$ is continuous and conservative, the monadic adjunction
\begin{equation} 	\label{adj:monadic adjunction for ICOHzero}
\begin{tikzpicture}[scale=1.5]
\node (a) at (0,1) {$\QCoh(\Y)$};
\node (b) at (4,1) {$\ICoh_0(\Z^\wedge_{\Y})$.};
\path[->,font=\scriptsize,>=angle 90]
([yshift= 1.5pt]a.east) edge node[above] {$(\primef)_{*,0} \simeq (\primef)_*^\ICoh \circ \Upsilon_\Y $} ([yshift= 1.5pt]b.west);
\path[->,font=\scriptsize,>=angle 90]
([yshift= -1.5pt]b.west) edge node[below] {$(\primef)^{!,0} \simeq \Phi_\Y \circ (\primef)^!$} ([yshift= -1.5pt]a.east);
\end{tikzpicture}
\end{equation}
yields an equivalence 
$$
\ICoh_0(\Z^\wedge_\Y) \simeq \UQ (\Tang_{\Y/\Z}) \mod (\QCoh(\Y)),
$$ 
where $\UQ (\Tang_{\Y/\Z}) $ is, by definition, the monad $\Phi_\Y \circ \U(\Tang_{\Y/\Z}) \circ  \Upsilon_\Y$.

\medskip

This monadic description implies the compact generation of $\ICoh_0(\Z^\wedge_\Y)$ as follows.

\begin{cor} \label{cor:finale}
With the notation above, assume furthermore that $\Y$ is perfect. Then the DG category $\ICoh_0(\Z^\wedge_\Y)$ is compactly generated by objects of the form 
$$
(\primef)_{*,0}(P) \simeq
(\primef)_*^\IndCoh (\Upsilon_\Y(P)),
\hspace{.4cm}
\text{for} \; P \in \Perf(\Y).
$$  
\end{cor}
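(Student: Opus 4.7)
The plan is to apply a standard monadic-adjunction argument using the adjunction \eqref{adj:monadic adjunction for ICOHzero}. The key observation is that the right adjoint $(\primef)^{!,0} \simeq \Phi_\Y \circ (\primef)^!$ is both continuous and conservative. Continuity has already been noted. For conservativity, I would unravel the pullback definition (\ref{diag:definition-ICoh0}): an object of $\ICoh_0(\Z^\wedge_\Y)$ is encoded by a pair $(F, M) \in \ICoh(\Z^\wedge_\Y) \times \QCoh(\Y)$ together with an identification $(\primef)^!(F) \simeq \Upsilon_\Y(M)$, and $(\primef)^{!,0}$ sends such a pair to $M$. If $M \simeq 0$, then $(\primef)^!(F) \simeq 0$; since $\primef: \Y \to \Z^\wedge_\Y$ is a nil-isomorphism, $(\primef)^!$ is conservative, and hence $F \simeq 0$.

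Granted these two properties, the argument proceeds as follows. Continuity of $(\primef)^{!,0}$ means that its left adjoint $(\primef)_{*,0}$ preserves compactness. Since $\Y$ is perfect, $\QCoh(\Y)$ is compactly generated by $\Perf(\Y)$, so the objects $(\primef)_{*,0}(P)$ for $P \in \Perf(\Y)$ are compact in $\ICoh_0(\Z^\wedge_\Y)$. Conservativity of the right adjoint then upgrades this to compact generation: if $E \in \ICoh_0(\Z^\wedge_\Y)$ satisfies $\Hom((\primef)_{*,0}(P), E) \simeq \Hom(P, (\primef)^{!,0}(E)) \simeq 0$ for all $P \in \Perf(\Y)$, then $(\primef)^{!,0}(E) \simeq 0$ in $\QCoh(\Y)$ (by compact generation on that side), and conservativity forces $E \simeq 0$.

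Finally, the identification $(\primef)_{*,0}(P) \simeq (\primef)_*^\ICoh(\Upsilon_\Y(P))$ is exactly the content of \propref{prop:monadic-descrption-ICoh_0}, which asserts that the bottom commutative square (\ref{diag:definition-ICoh0-left-adjoints}) of left adjoints commutes, i.e., $(\primef)_{*,0} \simeq (\primef)_*^\ICoh \circ \Upsilon_\Y$ after composing with the fully faithful inclusion $\iota$. No step is really difficult here, since \propref{prop:monadic-descrption-ICoh_0} has already done the hard work of establishing that $\Upsilon_\Y$ composed with $(\primef)_*^\ICoh$ actually lands inside $\ICoh_0(\Z^\wedge_\Y)$; the corollary is a formal consequence of that proposition combined with the perfection hypothesis on $\Y$.
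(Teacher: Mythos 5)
Your argument is correct and is essentially the one the paper intends: the corollary is stated as a formal consequence of the monadic adjunction (\ref{adj:monadic adjunction for ICOHzero}) having a continuous, conservative right adjoint $(\primef)^{!,0} \simeq \Phi_\Y \circ (\primef)^!$, combined with compact generation of $\QCoh(\Y)$ by $\Perf(\Y)$ and the identification of the left adjoint from \propref{prop:monadic-descrption-ICoh_0}. Your explicit verification of conservativity via the nil-isomorphism property of $\primef$ fills in exactly the detail the paper leaves implicit.
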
 

\begin{rem} \label{rem:cute}
The lax commutative diagram
\begin{equation} 
\begin{tikzpicture}[scale=1.5]
\node (M) at (0,1.2) {$\ICoh_0(\Z^\wedge_\Y)$ };
\node (IM) at (0,0) {$\ICoh(\Z^\wedge_\Y)$};
\node (N) at (3,1.2) {$\QCoh(\Y)$};
\node (IN) at (3,0) {$\ICoh(\Y),$};
\path[<-,font=\scriptsize,>=angle 90]
(M.south) edge node[right] { $\iota^R$ } (IM.north);
\path[<-,font=\scriptsize,>=angle 90]
(N.south) edge node[right] { $\Phi_\Y$ } (IN.north);
\path[<-,font=\scriptsize,>=angle 90]
(M.east) edge node[above] { $(\primef)_{*,0}$ } (N.west);
\path[<-,font=\scriptsize,>=angle 90]
(IM.east) edge node[above] { $(\primef)_*^\ICoh$ } (IN.west);
\end{tikzpicture}
\end{equation}
obtained from (\ref{diag:definition-ICoh0-left-adjoints}) by changing the vertical arrows with their right adjoints, is commutative. 
Checking this boils down to proving that $\iota^R$ sends $(\primef)_*^\ICoh(C)$, with $C \in \ICoh(\Y)$, to the object $(\primef)_*^\ICoh(\Upsilon_\Y \Phi_\Y  C)$.
This is a simple computation, which uses the fact that $\Phi_\Y$ is $\QCoh(\Y)$-linear.  %
\end{rem}

\ssec{Duality} \label{ssec:duality of ICohzero}

Let $\Y \to \Z$ be a morphism in $\Stk$, with $\Y$ bounded and perfect. Assume also that $\LL_{\Y/\Z}$ perfect. We show that the DG category $\ICoh_0(\Z^\wedge_\Y)$ is naturally self-dual. 

\sssec{}

Denote by $\ICoh_0^{(\Xi)}(\Z^\wedge_\Y)$ the DG category defined as in diagram (\ref{diag:definition-ICoh0}), but with the inclusion $\Xi_\Y$ in place of $\Upsilon_\Y$.
Reasoning as above, we see that there is a monadic adjunction
\begin{equation} 	\label{adj:monadic adjunction for ICOHzero Xi}
\begin{tikzpicture}[scale=1.5]
\node (a) at (0,1) {$\QCoh(\Y)$};
\node (b) at (4,1) {$\ICoh_0^{(\Xi)}(\Z^\wedge_{\Y})$.};
\path[->,font=\scriptsize,>=angle 90]
([yshift= 1.5pt]a.east) edge node[above] {$(\primef)_*^\ICoh \circ \Xi_\Y $} ([yshift= 1.5pt]b.west);
\path[->,font=\scriptsize,>=angle 90]
([yshift= -1.5pt]b.west) edge node[below] {$\Psi_\Y \circ (\primef)^! $} ([yshift= -1.5pt]a.east);
\end{tikzpicture}
\end{equation}
In particular, since $\Y$ is perfect, $\ICoh_0^{(\Xi)}(\Z^\wedge_\Y)$ is compactly generated by $(\primef)_*^\ICoh(\Xi_\Y \Perf(\Y))$.

\begin{lem}
The DG categories $\ICoh_0^{(\Xi)}(\Z^\wedge_\Y)$ and $\ICoh_0(\Z^\wedge_\Y)$ are mutually dual.
\end{lem}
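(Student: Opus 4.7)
The plan is to reduce the mutual duality to a contravariant equivalence at the level of compact objects, which will then be supplied by Serre duality on the ambient category $\ICoh(\Z^\wedge_\Y)$.

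First, I would observe that both DG categories are compactly generated with explicit generators. The corollary above shows that $\ICoh_0(\Z^\wedge_\Y)$ is compactly generated by the objects $(\primef)_*^\ICoh(\Upsilon_\Y(P))$ for $P \in \Perf(\Y)$. The same argument, using the monadic adjunction (\ref{adj:monadic adjunction for ICOHzero Xi}) and the perfection of $\Y$, shows that $\ICoh_0^{(\Xi)}(\Z^\wedge_\Y)$ is compactly generated by $(\primef)_*^\ICoh(\Xi_\Y(Q))$ for $Q \in \Perf(\Y)$. Consequently, it suffices to produce an equivalence of small DG categories
\[
\bbD:\bigt{\ICoh_0(\Z^\wedge_\Y)^\cpt}^\op \xto{\;\simeq\;} \ICoh_0^{(\Xi)}(\Z^\wedge_\Y)^\cpt,
\]
for then ind-completion yields the desired mutual duality.

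To construct $\bbD$, I would invoke Serre duality on $\ICoh(\Z^\wedge_\Y)$. By Proposition \ref{prop:ICOH-formal-completion-self-dual} the DG category $\ICoh(\Z^\wedge_\Y)$ is self-dual, and the unnamed paragraph immediately following that proposition records that the induced Serre involution on compacts satisfies
\[
\bbD^\Serre_{\Z^\wedge_\Y}\bigt{(\primef)_*^\ICoh(C)}\simeq (\primef)_*^\ICoh\bigt{\bbD^\Serre_\Y(C)}, \qquad C\in\Coh(\Y).
\]
On the other hand, by the identity in Section \ref{sssec:Serre duality}, $\bbD^\Serre_\Y(\Upsilon_\Y(P))\simeq \bbD^\QCoh_\Y(P)$ for $P\in\Perf(\Y)$, where the right-hand side lives in $\Coh(\Y)$ via the inclusion $\Xi_\Y|_{\Perf(\Y)}:\Perf(\Y)\hto\Coh(\Y)$. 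Composing these two identities, the Serre involution on $\ICoh(\Z^\wedge_\Y)^\cpt$ sends
\[
(\primef)_*^\ICoh\Upsilon_\Y(P) \;\longmapsto\; (\primef)_*^\ICoh\Xi_\Y(P^\vee),
\]
so it restricts to a contravariant equivalence between the two prescribed classes of compact generators; taking Karoubi closures yields the desired $\bbD$.

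Finally, to pass from $\bbD$ to a genuine duality datum (evaluation and coevaluation functors satisfying the triangle identities), one simply transports the Serre self-duality datum for $\ICoh(\Z^\wedge_\Y)$ along the corresponding inclusions $\iota:\ICoh_0(\Z^\wedge_\Y)\hto\ICoh(\Z^\wedge_\Y)$ and $\iota^{(\Xi)}:\ICoh_0^{(\Xi)}(\Z^\wedge_\Y)\hto\ICoh(\Z^\wedge_\Y)$, using that the image of the former under $\bbD^\Serre_{\Z^\wedge_\Y}$ (at the compact level, hence globally after ind-completion) is exactly the latter. The main subtle point I expect is checking that this transported evaluation pairing is in fact a non-degenerate perfect pairing, i.e. that the induced functor $\ICoh_0^{(\Xi)}(\Z^\wedge_\Y)\to\ICoh_0(\Z^\wedge_\Y)^\vee$ is an equivalence; but this reduces, on compacts, precisely to the contravariant equivalence $\bbD$ produced above and hence is automatic.
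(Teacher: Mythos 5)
Your argument is correct, and it reaches the duality by a route that is organized differently from the paper's, though it leans on the same two underlying facts (self-duality of $\ICoh(\Z^\wedge_\Y)$ and the interplay of Serre duality with $\Upsilon_\Y$ versus $\Xi_\Y$). You work entirely at the level of compact objects: you note that the Serre involution of $\ICoh(\Z^\wedge_\Y)^\cpt$ carries $(\primef)_*^\ICoh\Upsilon_\Y(P)$ to $(\primef)_*^\ICoh\Xi_\Y(P^\vee)$, hence exchanges the thick subcategories of compact objects of the two $\ICoh_0$-categories, and then you ind-complete using $\C^\vee \simeq \Ind\bigt{(\C^\cpt)^\op}$. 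The paper instead never leaves the big categories: it uses that both categories are colocalizations (retracts) of the self-dual category $\ICoh(\Z^\wedge_\Y)$, identifies the dual of $\ICoh_0(\Z^\wedge_\Y)$ as the full subcategory of objects $\F$ on which a certain idempotent is the identity, and unwinds that condition via the pairing $\langle \F, (\primef)_*^\ICoh(\Upsilon_\Y\Phi_\Y C)\rangle \to \langle \F, (\primef)_*^\ICoh(C)\rangle$ into the condition $(\primef)^!\F \in \Xi_\Y(\QCoh(\Y))$, i.e.\ $\F \in \ICoh_0^{(\Xi)}(\Z^\wedge_\Y)$. Your version is more explicit and immediately exhibits the contravariant equivalence on compacts (which is also what makes the subsequent self-duality statement transparent), at the cost of invoking the general formalism of duals of compactly generated categories; the paper's version avoids compact objects but requires the slightly fussier identification of the dual of a colocalization inside the ambient self-dual category. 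Your closing paragraph about transporting the evaluation pairing is redundant with the ind-completion step — once you have the anti-equivalence on compacts, non-degeneracy is automatic — but it is not wrong.
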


\begin{proof}
Both categories are compactly generated, hence dualizable. Furthermore, they are retracts of the dualizable DG category $\ICoh(\Z^\wedge_\Y)$ by means of the right adjoints of the structure inclusions 
$$
\iota: \ICoh_0(\Z^\wedge_\Y) \hto \ICoh(\Z^\wedge_\Y),
\hspace{.6cm}
\ICoh_0^{(\Xi)}(\Z^\wedge_\Y) \hto \ICoh(\Z^\wedge_\Y).
$$
Observe that these right adjoints are continuous in view of Remark \ref{rem:cute}.

Now, using the self-duality of $\ICoh(\Z^\wedge_\Y)$ proven in Proposition \ref{prop:ICOH-formal-completion-self-dual}, we see that the dual of $\ICoh_0(\Z^\wedge_\Y)$ is the full subcategory of $\ICoh(\Z^\wedge_\Y)$ consisting of those objects $\F$ for which the natural arrow $(\iota^R)^\vee \iota^\vee (\F) \to \F$ is an isomorphism. This happens if and only if
$$
\langle \F, (\primef)_*^\ICoh(\Upsilon_\Y \Phi_\Y  C) \rangle
\longto
\langle \F, (\primef)_*^\ICoh(C) \rangle
$$
is an isomorphism for any $C \in \Coh(\Y)$, which in turn is equivalent to  
$$
\Xi_\Y \Psi_\Y \bigt{ (\primef)^!\F } \xto{\simeq} (\primef)^!\F.
$$
In other words, $(\primef)^!\F$ must belong to $\Xi_\Y(\QCoh(\Y))$, which means precisely that $\F \in \ICoh_0^{(\Xi)}(\Z^\wedge_\Y)$.
\end{proof}

\begin{rem}
One easily checks that the dual of the inclusion $\iota: \ICoh_0(\Z^\wedge_\Y) \hto \ICoh(\Z^\wedge_\Y)$ is the functor
$$
\iota^\vee: \ICoh(\Z^\wedge_\Y) \tto \ICoh_0^{(\Xi)}(\Z^\wedge_\Y)
$$
that sends $(\primef)_*^\ICoh(\F) \squigto (\primef)_*^\ICoh(\Xi_\Y \Psi_\Y \F)$.
\end{rem}

\begin{prop}  \label{prop: ICOHzero-for-stacks-self-dual}
In the situation above, there exists an equivalence 
$$
\sigma:\ICoh_0^{(\Xi)}(\Z^\wedge_\Y)
\xto{\simeq}
\ICoh_0  (\Z^\wedge_\Y)
$$
that renders the triangle
\begin{equation} \label{diag:triangle-with-two-ICoh0}
\begin{tikzpicture}[scale=1.5]
\node (Ups) at (4,0) {$\ICoh_0(\Z^\wedge_\Y)$ };
\node (Xi) at (0,0) {$\ICoh_0^{(\Xi)}(\Z^\wedge_\Y)$ };
\node (Q) at (2,1) {$\QCoh(\Y)$};
\path[->,font=\scriptsize,>=angle 90]
(Q.south west) edge node[right] {  } (Xi.north east);
\path[->,font=\scriptsize,>=angle 90]
(Q.south east) edge node[right] {  } (Ups.north west);
\path[->,font=\scriptsize,>=angle 90]
(Xi.east) edge node[above] { $\sigma$ } (Ups.west);
\end{tikzpicture}
\end{equation}
commutative. In particular, $\ICoh_0(\Z^\wedge_\Y)$ is self-dual in the only way that makes $(\primef)_{*,0}$ and $(\primef)^{!,0}$ dual to each other. 
\end{prop}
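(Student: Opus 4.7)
The plan is to construct $\sigma$ by exhibiting a natural isomorphism of monads on $\QCoh(\Y)$. By the adjunctions (\ref{adj:monadic adjunction for ICOHzero}) and (\ref{adj:monadic adjunction for ICOHzero Xi}), we have
$$
\ICoh_0(\Z^\wedge_\Y) \simeq \bigt{\Phi_\Y \circ \U(\Tang_{\Y/\Z}) \circ \Upsilon_\Y} \mod (\QCoh(\Y))
$$
and
$$
\ICoh_0^{(\Xi)}(\Z^\wedge_\Y) \simeq \bigt{\Psi_\Y \circ \U(\Tang_{\Y/\Z}) \circ \Xi_\Y} \mod (\QCoh(\Y)),
$$
where in both cases the generating functor from $\QCoh(\Y)$ is the respective free-module functor. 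Commutativity of the triangle in (\ref{diag:triangle-with-two-ICoh0}) is thus equivalent to producing a natural isomorphism of these two monads, which transports via free-module equivalences to yield $\sigma$.

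To construct this isomorphism I would filter both monads by the PBW filtration on $\U(\Tang_{\Y/\Z})$ already invoked in the proof of \propref{prop:monadic-descrption-ICoh_0}. Since $\LL_{\Y/\Z}$ is perfect, $\Tang_{\Y/\Z} \simeq \Upsilon_\Y(\TangQ_{\Y/\Z})$, and using that $\omega_\Y$ is the unit of $\sotimes$ together with the $\QCoh(\Y)$-linearity of $\sotimes$, one extracts the identities
$$
\Upsilon_\Y(A) \sotimes \Upsilon_\Y(B) \simeq \Upsilon_\Y(A \otimes B)
\quad \text{and} \quad
\Upsilon_\Y(A) \sotimes \Xi_\Y(B) \simeq \Xi_\Y(A \otimes B),
$$
valid for $A, B \in \QCoh(\Y)$. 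Applied to $\Sym^n(\Tang_{\Y/\Z}) \simeq \Upsilon_\Y(\Sym^n(\TangQ_{\Y/\Z}))$, together with the full faithfulness $\Phi_\Y \Upsilon_\Y \simeq \id \simeq \Psi_\Y \Xi_\Y$ (valid for bounded $\Y$), these identities show that the $n$-th associated graded of both monads agrees with the functor $Q \squigto \Sym^n(\TangQ_{\Y/\Z}) \otimes Q$ on $\QCoh(\Y)$.

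The main obstacle is upgrading this associated-graded identification to an isomorphism of filtered monads. I would argue that each filtered piece of the two monads is $\QCoh(\Y)$-linear (the three constituent functors all being $\QCoh(\Y)$-linear), so that, by the construction of the universal envelope of a Lie algebroid in \cite[Chapter IV.5]{Book}, both filtered monads compute the universal envelope of the common $\QCoh(\Y)$-Lie algebroid structure on $\TangQ_{\Y/\Z}$; the key input is that the images of $\Upsilon_\Y$ and $\Xi_\Y$ are each preserved by every $\U^{\leq n}(\Tang_{\Y/\Z})$, as the PBW argument shows. Passing to the colimit in $n$ produces the required isomorphism of monads and therefore the equivalence $\sigma$. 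The self-duality statement then follows by composing $\sigma$ with the duality of the preceding lemma $\ICoh_0(\Z^\wedge_\Y)^\vee \simeq \ICoh_0^{(\Xi)}(\Z^\wedge_\Y)$: the resulting self-duality of $\ICoh_0(\Z^\wedge_\Y)$ makes $(\primef)_{*,0}$ and $(\primef)^{!,0}$ mutually dual by construction of the pairing in that lemma. Uniqueness is automatic, since any self-duality of a compactly generated DG category is determined by its effect on a set of compact generators, and the requirement that $(\primef)_{*,0}$ be dual to $(\primef)^{!,0}$ pins down this effect uniquely on the generators $(\primef)_{*,0}(\Perf(\Y))$.
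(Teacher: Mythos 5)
Your proposal is correct and follows essentially the same route as the paper: the paper likewise constructs $\sigma$ by identifying the monads $\Phi_\Y\,\U(\Tang_{\Y/\Z})\,\Upsilon_\Y$ and $\Psi_\Y\,\U(\Tang_{\Y/\Z})\,\Xi_\Y$ via the PBW filtration, checking on the associated graded that each $\U^{\leq n}(\Tang_{\Y/\Z})$ preserves $\Xi_\Y(\Perf(\Y))$ as well as $\Upsilon_\Y(\Perf(\Y))$, and then deduces the duality statement by composing $\sigma$ with the duality of the preceding lemma (the paper spells out the resulting computation of the pairing $\langle (\primef)_{*,0}Q, \F\rangle \simeq \langle Q, (\primef)^{!,0}\F\rangle$, which you assert "by construction"). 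Your explicit use of the identities $\Upsilon_\Y(A)\sotimes\Upsilon_\Y(B)\simeq\Upsilon_\Y(A\otimes B)$ and $\Upsilon_\Y(A)\sotimes\Xi_\Y(B)\simeq\Xi_\Y(A\otimes B)$ is just an unpacking of the paper's terser "immediately checked at the level of the associated graded."
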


\begin{proof}
The adjunction (\ref{adj:monadic adjunction for ICOHzero Xi}) is monadic, and the monad is easily seen to coincide with the one of the monadic adjunction (\ref{adj:monadic adjunction for ICOHzero}): indeed, it suffices to show that each functor $\U^{\leq n}(\Tang_{\Y/\Z})$ preserves the subcategory $\Xi(\Perf(\Y)) \subseteq \ICoh(\Y)$, which is immediately checked at the level of the associated graded.

This fact implies the existence of the equivalence $\sigma$ fitting in the above triangle.
As for the duality statement, let us compute the evaluation between $f_{*,0}(Q)$ and an arbitrary $\F \in \ICoh_0(\Z^\wedge_\Y)$. We have 
\begin{eqnarray}
\nonumber
\langle (\primef)_*^\ICoh \bigt{ \Upsilon_\Y Q },  \F \rangle_{\ICoh_0(\Z^\wedge_\Y)}
& \simeq &
\langle \sigma^{-1} (\primef)_*^\ICoh \bigt{ \Upsilon_\Y Q },  \F \rangle_{\ICoh(\Z^\wedge_\Y)}
\\
\nonumber
& \simeq &
\langle (\primef)_*^\ICoh \bigt{ \Xi_\Y Q },  \F \rangle_{\ICoh(\Z^\wedge_\Y)}
\\
\nonumber
& \simeq &
\langle Q, \Phi_\Y  (\primef)^!\F \rangle_{\QCoh(\Y)}
\\
\nonumber
& \simeq &
\langle Q, (\primef)^{!,0} \F \rangle_{\QCoh(\Y)},
\end{eqnarray}
as claimed.
\end{proof}

\ssec{Functoriality}

The results of the previous sections show that the DG category $\ICoh_0(\Z^\wedge_\Y)$ is particularly well-behaved in the case $\Y \in \Stkperfevcoc$ and the relative contangent complex $\LL_{\Y/\Z}$ is perfect. Hence, it makes sense to restrict $\ICoh_0$ to arrows $\Y \to \Z$ in $\Stkperfevcoclfp$.
In this section, we upgrade the functor
$$
\bigt{
\Arr(\Stkperfevcoclfp)
}^\op
\longto \DGCat,
\hspace{.4cm}
[\Y \to \Z] \squigto \ICoh_0(\Z^\wedge_\Y)
$$
to a functor out a certain category of correspondences of $\Arr(\Stkperfevcoclfp)$.
To do so, we shall reduce the question to the functoriality of 
$$
[\Y \to \Z] \squigto \ICoh(\Z^\wedge_\Y),
$$
which is known thanks to Theorem \ref{thm:ICOH-corresp-book}.

\sssec{}

To simplify the notation, denote by $\Arr := \Arr(\Stkperfevcoclfp)$ the $\infty$-category of arrows in $\Stkperfevcoclfp$. A $1$-morphism in $\Arr$, say between $[\Y_1 \to \Z_1]$ and $[\Y_2 \to \Z_2]$, is represented by a commutative diagram
\begin{equation} \label{diag:map-in-Arr}
\begin{tikzpicture}[scale=1.5]
\node (00) at (0,0) {$\Z_1$};
\node (10) at (1.5,0) {$\Z_2$,};
\node (01) at (0,.8) {$\Y_1$};
\node (11) at (1.5,.8) {$\Y_2$};
\path[->,font=\scriptsize,>=angle 90]
(00.east) edge node[above] {$ $} (10.west);
\path[->,font=\scriptsize,>=angle 90]
(01.east) edge node[above] {$ $} (11.west);
\path[->,font=\scriptsize,>=angle 90]
(01.south) edge node[right] {$ $} (00.north);
\path[->,font=\scriptsize,>=angle 90]
(11.south) edge node[right] {$ $} (10.north);
\end{tikzpicture}
\end{equation}
where, by convention, objects of $\Arr$ are always drawn as vertical arrows.

\sssec{}

Note that $\Arr$ is not closed under fiber products, as $\Stkevcoc$ is not (see Example \ref{example:loops}). Hence, we cannot define the $\infty$-category $\Corr(\Arr)$ without restricting the class of vertical arrows in some appropriate way.
To this end, let us make the following definitions. We say that a morphism (\ref{diag:map-in-Arr}) is \emph{schematic and bounded} (respectively, \emph{schematic and proper}) if so is the top horizontal map.

It is then clear that the $\2$-category
$$
\Corr(\Arr)_{\schem \& \evcoc;\all}^{\schem \& \evcoc \& \proper}
$$
is well defined. We will show that the assignment $[\Y \to \Z] \squigto \ICoh_0(\Z^\wedge_\Y)$ can be upgraded to an $\2$-functor out of the above $\2$-category of correspondences, with pushforward functor directly induced by the $(*, \ICoh)$-pushforward. More precisely, we will establish the following theorem.

\begin{thm} \label{thm:functoriality of ICOHzero}
The functor $\ICoh$ of (\ref{eqn:ICOH-out-of-Corr-formal-completions}) restricts to an $\2$-functor
\begin{equation} \label{eqn:ICOH-zero-out-of-corresp-ev-coc-case}
\ICoh_0:
\Corr(\Arr)_{\schem \& \evcoc;\all}^{\schem \& \evcoc \& \proper}
\longto
\DGCat^{2 - \Cat}.
\end{equation}
\end{thm}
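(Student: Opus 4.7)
My plan is to obtain the claimed $(\infty,2)$-functor $\ICoh_0$ by \emph{restricting} the $(\infty,2)$-functor $\ICoh$ of Theorem \ref{thm:ICOH-corresp-formal completions} along the evident inclusion
$$
\Corr(\Arr)_{\schem \& \evcoc;\all}^{\schem \& \evcoc \& \proper} \;\subseteq\; \Corr(\Arr(\Stk))_{\schem;\all}^{\schem \& \proper}
$$
and then verifying that this restriction factors through the full subcategory inclusions $\iota: \ICoh_0(\Z^\wedge_\Y) \hto \ICoh(\Z^\wedge_\Y)$ of (\ref{diag:definition-ICoh0}). Because each $\iota$ is fully faithful, the $(\infty,2)$-categorical coherences of the factored functor (including the adjunction data witnessing admissibility of proper $2$-morphisms) are inherited from $\ICoh$ for free. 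The content of the theorem thus reduces to two closure statements at the level of $1$-morphisms of correspondences: $!$-pullback along horizontal legs and $\ICoh$-pushforward along vertical legs must preserve $\ICoh_0$.

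\textbf{Step 1 ($!$-pullback).} A morphism of arrows $[\U \to \V] \to [\Y \to \Z]$, with horizontal components $g:\U \to \Y$ and $h:\V \to \Z$, induces a map $\phi: \V^\wedge_\U \to \Z^\wedge_\Y$ fitting in a commutative square with the nil-isomorphisms $\primef_{\U/\V}$ and $\primef$. For $\F \in \ICoh_0(\Z^\wedge_\Y)$ one computes
$$
(\primef_{\U/\V})^!(\phi^! \F) \simeq g^!\bigl((\primef)^! \F\bigr).
$$
By hypothesis $(\primef)^! \F \in \Upsilon_\Y(\QCoh(\Y))$, and the projection formula $g^!(Q \otimes \G) \simeq g^* Q \otimes g^! \G$ applied to $\G = \omega_\Y$ gives the $\QCoh$-linearity identity $g^! \circ \Upsilon_\Y \simeq \Upsilon_\U \circ g^*$. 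Therefore $g^!\bigl((\primef)^! \F\bigr) \in \Upsilon_\U(\QCoh(\U))$, which means $\phi^! \F \in \ICoh_0(\V^\wedge_\U)$.

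\textbf{Step 2 ($\ICoh$-pushforward) and main obstacle.} Assume now that $g$ is schematic and bounded. I would test preservation of $\ICoh_0$ on the compact generators $(\primef_{\U/\V})_*^\ICoh \Upsilon_\U(P)$ with $P \in \Perf(\U)$, exhibited in Proposition \ref{prop:monadic-descrption-ICoh_0}. The commutativity $\phi \circ \primef_{\U/\V} = \primef \circ g$ gives
$$
\phi_*^\ICoh (\primef_{\U/\V})_*^\ICoh \Upsilon_\U(P) \;\simeq\; (\primef)_*^\ICoh\bigl(g_*^\ICoh \Upsilon_\U(P)\bigr),
$$
reducing the problem to showing that $g_*^\ICoh$ carries $\Upsilon_\U(\QCoh(\U))$ into $\Upsilon_\Y(\QCoh(\Y))$ for any schematic bounded $g$. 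This is the genuine obstacle: for proper $g$ the adjunction $g_*^\ICoh \dashv g^!$ makes the identity $g_*^\ICoh \circ \Upsilon_\U \simeq \Upsilon_\Y \circ g_*$ essentially immediate, but without properness one must produce the comparison by hand. I would handle it by smooth descent along an atlas of $\Y$, reducing to the case of an affine schematic morphism of bounded affine DG schemes, where $\omega_\U \simeq g^! \omega_\Y$ together with the $\QCoh$-linearity of $g_*^\ICoh$ (against the $g^*$-action on the source) gives an identification $g_*^\ICoh \circ \Upsilon_\U \simeq \Upsilon_\Y \circ g_*^\QCoh$ on affines. Once this compatibility is in place, Step 2 is complete and the restriction of $\ICoh$ lands in the subcategories $\ICoh_0$, yielding the desired $(\infty,2)$-functor.
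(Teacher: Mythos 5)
Your Step 1 is fine (the identity $g^!\circ\Upsilon_\Y\simeq\Upsilon_\U\circ g^*$ is standard and is why the paper treats the $(!,0)$-pullback functoriality as essentially definitional), and your reduction in Step 2 — test on the generators $(\primef)_*^\ICoh\Upsilon_\U(\Perf(\U))$, reduce to showing $g_*^\ICoh$ carries $\Upsilon_\U(\QCoh(\U))$ into $\Upsilon_\Y(\QCoh(\Y))$, and localize smoothly on $\Y$ to reduce to schemes — is exactly the paper's reduction (Lemma \ref{lem:evcoc-push-forwards-preserve-icoh0}). The gap is in the last step. The asserted identification $g_*^\ICoh\circ\Upsilon_\U\simeq\Upsilon_\Y\circ g_*^{\QCoh}$ is false, even for proper bounded affine morphisms of bounded affine schemes: for the closed embedding $g$ of the origin into the affine line one has $g_*^\ICoh(\Upsilon_\pt(\kk))\simeq g_*\kk$, while $\Upsilon_{\bbA^1}(g_*\kk)\simeq g_*\kk[1]$, so the two functors differ by the relative dualizing complex $\omega_{\U/\Y}$ (in general a shift/twist, and for non-Gorenstein maps not even invertible). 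Moreover the mechanism you invoke does not produce any such identification: the projection formula controls $g_*^\ICoh(g^*P\otimes\omega_\U)$ for $P\in\QCoh(\Y)$, but the objects you must push forward are $Q\otimes\omega_\U$ for \emph{arbitrary} $Q\in\QCoh(\U)$, which are not of that form; and even on objects $g^*P\otimes\omega_\U$ the projection formula only reduces you to knowing that $g_*^\ICoh(\omega_\U)$ lies in $\Upsilon_\Y(\QCoh(\Y))$ — which is an instance of the statement being proved, not a consequence of $\QCoh$-linearity.

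The missing ingredient is precisely where the boundedness (finite Tor-dimension) of $g$ must enter. The paper's route: the containment is equivalent to the counit $\Upsilon_Y\Phi_Y f_*^\ICoh\Upsilon_X\to f_*^\ICoh\Upsilon_X$ being an isomorphism; passing to Serre duals (under which $\Upsilon\leftrightarrow\Psi$, $\Phi\leftrightarrow\Xi$ and $f_*^\ICoh\leftrightarrow f^!$) this becomes the assertion that $\Psi_X f^!\Xi_Y\Psi_Y(\F)\to\Psi_X f^!(\F)$ is an isomorphism for $\F\in\Coh(Y)$, which follows from \cite[Lemma 7.2.2]{ICoh}: for $f$ bounded there is a continuous $f^{!,\QCoh}$ with $f^{!,\QCoh}\circ\Psi_Y\simeq\Psi_X\circ f^!$. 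If you want to stay on the pushforward side, you would instead have to prove that $\omega_{\U/\Y}:=g^!\O_\Y$ lies in $\Upsilon_\U(\QCoh(\U)^-)$ (equivalently, that $g$ is eventually coconnective in the sense of \cite{ICoh}) and then show $g_*^\ICoh\circ\Upsilon_\U\simeq\Upsilon_\Y\circ g_*(-\otimes\omega^{\QCoh}_{\U/\Y})$; as written, your argument skips this and would "prove" the statement for arbitrary schematic maps, for which it is false.
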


\begin{proof}
It is clear that $!$-pullbacks always preserve the $\ICoh_0$ subcategories. It remains to check that, for a diagram (\ref{diag:map-in-Arr}) with schematic and bounded top arrow, the $\ICoh$-pushforward functor
$$
\ICoh((\Z_1)^\wedge_{\Y_1})
\longto
\ICoh((\Z_2)^\wedge_{\Y_2})
$$
preserves the $\ICoh_0$-subcategories. We can write the map $(\Z_1)^\wedge_{\Y_1} \to (\Z_2)^\wedge_{\Y_2}$ as the composition 
\begin{equation} \label{eqn:composition of maps of formal completions}
(\Z_1)^\wedge_{\Y_1}
\xto{\; \alpha \;}
(\Z_2)^\wedge_{\Y_1}
\xto{\; \beta \;}
(\Z_2)^\wedge_{\Y_2}
\end{equation}
and analyze the two resulting functors $\alpha_*^\ICoh$ and $\beta_*^\ICoh$ separately. Let us show that $\alpha_*^\ICoh$ preserves the $\ICoh_0$-subcategories. As we know by Corollary \ref{cor:finale}, the DG category $\ICoh_0((\Z_1)^\wedge_{\Y_1})$ is generated under colimits by the essential image of the induction map
$$
\QCoh(\Y_1) \to \ICoh_0((\Z_1)^\wedge_{\Y_1}),
$$
and $\alpha_*^\ICoh$ obviously sends each such generator to an object of $\ICoh_0((\Z_2)^\wedge_{\Y_1})$.
It remains to discuss the pushforward $\beta_*^\ICoh$ along the rightmost map in (\ref{eqn:composition of maps of formal completions}). The question is settled by the following more general result.
\end{proof}

\begin{lem} \label{lem:evcoc-push-forwards-preserve-icoh0}
Consider a string $\X \to \Y \to \Z$ in $\Stklfp$, with both $\X$ and $\Y$ bounded. Assume that the first map $f: \X \to \Y$ is schematic and bounded. Denoting by $\beta: \Z^\wedge_\X \to \Z^\wedge_\Y$ the induced map, the functor $\beta_*^{\ICoh}: \ICoh(\Z^\wedge_\X) \to \ICoh(\Z^\wedge_\Y)$ sends $\ICoh_0(\Z^\wedge_\X)$ to $\ICoh_0(\Z^\wedge_\Y)$.
\end{lem}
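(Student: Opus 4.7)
The plan is to test the claim on compact generators of $\ICoh_0(\Z^\wedge_\X)$ and thereby reduce everything to a standard $\Upsilon$-compatibility for pushforward along bounded schematic morphisms.

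First, I would check that $\ICoh_0(\Z^\wedge_\Y) \subseteq \ICoh(\Z^\wedge_\Y)$ is closed under arbitrary colimits: this follows from the continuity of $(\primef_\Y)^!$ together with the fact that $\Upsilon_\Y: \QCoh(\Y) \hookrightarrow \ICoh(\Y)$ is a fully faithful colimit-preserving inclusion (using that $\Y$ is bounded). The functor $\beta_*^\ICoh$ is likewise continuous, since $\beta$ is ind-inf-schematic (it is base-changed from $\X_\dR \to \Y_\dR$, which is ind-schematic because $f$ is schematic). Moreover, by \propref{prop:monadic-descrption-ICoh_0} together with the perfection of $\X$, the category $\ICoh_0(\Z^\wedge_\X)$ is compactly generated by the objects $(\primef_\X)_{*,0}(P) = (\primef_\X)_*^\ICoh \Upsilon_\X(P)$ for $P \in \Perf(\X)$. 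Thus it suffices to test the claim on these generators.

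For such a generator, the factorization $\beta \circ \primef_\X = \primef_\Y \circ f$ yields
\[
\beta_*^\ICoh\bigl((\primef_\X)_*^\ICoh \Upsilon_\X(P)\bigr) \simeq (\primef_\Y)_*^\ICoh\bigl(f_*^\ICoh \Upsilon_\X(P)\bigr),
\]
and since $(\primef_\Y)_*^\ICoh \circ \Upsilon_\Y$ coincides with $(\primef_\Y)_{*,0}$ and therefore tautologically lands in $\ICoh_0(\Z^\wedge_\Y)$, the whole question reduces to showing that $f_*^\ICoh \Upsilon_\X(P) \in \Upsilon_\Y(\QCoh(\Y))$ for every $P \in \Perf(\X)$.

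This last assertion is an instance of the expected compatibility $f_*^\ICoh \circ \Upsilon_\X \simeq \Upsilon_\Y \circ f_*^\QCoh$ for a schematic eventually coconnective (i.e.\ bounded) morphism between bounded lfp stacks, and it is the main technical obstacle of the proof. I would establish it by smooth (or Zariski) descent on the target, reducing to the case of a bounded affine morphism between bounded affine schemes, where it becomes a direct computation based on the formula $f^!(-) \simeq f^*(-) \otimes_{\O_\X} f^!\O_\Y$ together with the fact that $f^!\O_\Y$ is a perfect $\O_\X$-module. The boundedness hypothesis on $f$ is crucial here: it ensures that $f^!$ has finite cohomological amplitude on bounded coherent objects, which is what forces $f_*^\ICoh$ applied to $\Upsilon_\X$-type objects to remain inside $\Upsilon_\Y(\QCoh(\Y))$.
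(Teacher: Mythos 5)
Your reduction steps match the paper's: both arguments boil the lemma down to showing that $f_*^\ICoh \circ \Upsilon_\X$ lands in $\Upsilon_\Y(\QCoh(\Y))$, and both then localize smooth-locally on $\Y$ to reduce to schemes. (The paper does the first reduction via the fact that $\ICoh_0(\Z^\wedge_\X)$ is generated under colimits by $(\primef)_*^\ICoh\Upsilon_\X(\QCoh(\X))$ rather than by compact generators, which avoids invoking perfection of $\X$ — the lemma as stated only assumes $\Stklfp$ — but this is a minor difference.)

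The gap is in your final step. The identity $f^!(-) \simeq f^*(-) \otimes_{\O_\X} f^!\O_\Y$ is \emph{not} a formal consequence of $f$ being schematic and bounded: it is precisely the assertion that $f$ is Gorenstein, and for a bounded morphism that is not Gorenstein both this formula and the perfection of $f^!\O_\Y$ fail (e.g.\ for a finite flat morphism with non-Gorenstein fibers, $f^!\O_\Y$ is the Matlis-type dual of $f_*\O_\X$ and is not perfect over $\O_\X$). Under the standing lfp hypotheses Gorenstein-ness does in fact hold, but only by a substantive theorem (bounded $+$ perfect relative cotangent complex $\Rightarrow$ lci-like $\Rightarrow$ Gorenstein, the derived avatar of Avramov's theorem), which you would need to quote and which the paper deliberately avoids. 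Moreover, even granting the Gorenstein formula, the ``direct computation'' is not quite direct: in $f_*^\ICoh\bigl(P \otimes f^!\omega_\Y\bigr)$ the perfect complex $P$ lives on the source, so no projection formula moves it across $f_*^\ICoh$, and you end up having to prove the very $\Psi$-compatibility you are trying to sidestep. The paper's route is cleaner and uses only boundedness: apply Serre duality, which exchanges $\Upsilon(\Perf)$ with $\Perf = \Xi(\Perf)$ and converts the assertion into the statement that $\Psi_X \circ f^!$ factors as $f^{!,\QCoh}\circ \Psi_Y$ for a continuous $f^{!,\QCoh}$; this is exactly \cite[Lemma 7.2.2]{ICoh} for eventually coconnective morphisms. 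I would replace your last paragraph with that dualization argument.
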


\begin{proof} 
It suffices to check that the image of the functor 
$$
f_*^\ICoh \circ \Upsilon_\X: 
\QCoh(\X)
 \longto 
 \ICoh(\Y)
$$ 
is contained in $\Upsilon_\Y(\QCoh(\Y)) \subseteq \ICoh(\Y)$. 
Since the question is smooth local in $\Y$, we may pullback to an atlas of $\Y$, thereby reducing the assertion to the case of $X =\X$ and $Y=\Y$ schemes.

We need to prove that the natural transformation
$$
\Upsilon_Y \Phi_Y  f_*^\ICoh  \Upsilon_X
\longto
f_*^\ICoh  \Upsilon_X
$$
is an isomorphism. Passing to duals, this is equivalent to showing that  
\begin{equation} \label{eqn:counit of some adjunction}
\Psi_X f^! \Xi_Y \Psi_Y (\F)
\longto
\Psi_X  f^! (\F)
\end{equation}
is an isomorphism for any $\F \in \Coh(Y)$.
We now use \cite[Lemma 7.2.2]{ICoh}: since $f$ is bounded (eventually coconnective in the terminology of \cite[Section 7.2]{ICoh}), there exists a continuous functor $f^{!, \QCoh}$ equipped with as isomorphism $f^{!, \QCoh} \circ \Psi_Y \simeq \Psi_X \circ f^!$.
The conclusion follows from the fully faithfulness of $\Xi_Y$.
\end{proof}

\sssec{}

Let $\xi : [\Y_1 \to \Z_1] \to [\Y_2 \to \Z_2]$ be a morphism in $\Arr$ as in (\ref{diag:map-in-Arr}), which is schematic and bounded. We denote by 
$$
\xi_{*,0}: 
\ICoh_0((\Z_1)^\wedge_{\Y_1})
\longto
\ICoh_0((\Z_2)^\wedge_{\Y_2})
$$
the push-forward functor of the above theorem.
Such notation matches the usage of the $(*,0)$-pushforwards that appeared earlier in the text. Indeed, if $\xi$ is proper, $\xi_{*,0}$ is left adjoint to $\xi^{!,0}$.

\sssec{}

Let us spell out the base-change isomorphism for $\ICoh_0$ stated in Theorem \ref{thm:functoriality of ICOHzero}. A pair of maps
$$
f: [\W \to \X]
\longto 
[\Y \to \Z],
\hspace{.4cm}
g: [\U \to \V]
\longto 
[\Y \to \Z]
$$
in $\Arr$ corresponds to a commutative diagram
$$ 
\begin{tikzpicture}[scale=1.5]
\node (00) at (0,0) {$\X$};
\node (10) at (1.5,0) {$\Z$};
\node (01) at (0,.8) {$\W$};
\node (11) at (1.5,.8) {$\Y$};
\node (21) at (3,.8) {$\U$};
\node (20) at (3,0) {$\V$};
\path[->,font=\scriptsize,>=angle 90]
(00.east) edge node[above] {$ $} (10.west);
\path[->,font=\scriptsize,>=angle 90]
(01.east) edge node[above] {$ $} (11.west);
\path[<-,font=\scriptsize,>=angle 90]
(11.east) edge node[above] {$ $} (21.west);
\path[<-,font=\scriptsize,>=angle 90]
(10.east) edge node[above] {$ $} (20.west);
\path[->,font=\scriptsize,>=angle 90]
(01.south) edge node[right] {$ $} (00.north);
\path[->,font=\scriptsize,>=angle 90]
(11.south) edge node[right] {$ $} (10.north);
\path[->,font=\scriptsize,>=angle 90]
(21.south) edge node[right] {$ $} (20.north);
\end{tikzpicture}
$$
in $\Stkevcoclfp$. Such commutative diagram yields the commutative diagram
$$ 
\begin{tikzpicture}[scale=1.5]
\node (00) at (0,0) {$\X$};
\node (10) at (1.5,0) {$\X \times_\Z \V$ };
\node (01) at (0,.8) {$\W$};
\node (11) at (1.5,.8) {$\W \times_\Y \U$};
\node (21) at (3,.8) {$\U$};
\node (20) at (3,0) {$\V$};
\path[<-,font=\scriptsize,>=angle 90]
(00.east) edge node[above] {$ $} (10.west);
\path[<-,font=\scriptsize,>=angle 90]
(01.east) edge node[above] {$ $} (11.west);
\path[->,font=\scriptsize,>=angle 90]
(11.east) edge node[above] {$ $} (21.west);
\path[->,font=\scriptsize,>=angle 90]
(10.east) edge node[above] {$ $} (20.west);
\path[->,font=\scriptsize,>=angle 90]
(01.south) edge node[right] {$ $} (00.north);
\path[->,font=\scriptsize,>=angle 90]
(11.south) edge node[right] {$ $} (10.north);
\path[->,font=\scriptsize,>=angle 90]
(21.south) edge node[right] {$ $} (20.north);
\end{tikzpicture}
$$
which we regard as a correspondence
$$
[\W \to \X]
\xleftarrow{ \; \; G \; \; }
[\W \times_\Y \U \to \X \times_\Z \V]
\xto{\; \; F \; \; }
[\U \to \V]
$$
in $\Arr$, \emph{provided} that $f$ is bounded (so that $\W \times_\Y \U$ is bounded too). The theorem states that, if $f$ is moreover schematic, the diagram
$$ 
\begin{tikzpicture}[scale=1.5]
\node (00) at (0,0) {$\ICoh_0(\X^\wedge_\W)$};
\node (10) at (3.5,0) {$\ICoh_0(\Z^\wedge_\Y)$};
\node (11) at (3.5,1.2) {$\ICoh_0(\V^\wedge_\U)$};
\node (01) at (0,1.2) {$\ICoh_0((\X \times_\Z \V)^\wedge_{\W \times_\Y \U})$};
\path[->,font=\scriptsize,>=angle 90]
(00.east) edge node[above] {$f_{*,0}$} (10.west);
\path[->,font=\scriptsize,>=angle 90]
(01.east) edge node[above] {$F_{*,0}$} (11.west);
\path[<-,font=\scriptsize,>=angle 90]
(01.south) edge node[right] {$ G^{!,0}$} (00.north);
\path[<-,font=\scriptsize,>=angle 90]
(11.south) edge node[right] {$g^{!,0}$} (10.north);
\end{tikzpicture}
$$
is naturally commutative.

\sssec{} 

We now use the above functoriality to prove descent of $\ICoh_0$ \virg{in the second variable}.

\begin{cor} \label{cor:descent for ?-pullback in second variable}
For any $\W \in \Stkevcoclfp$, the functor
$$
((\Stkevcoclfp)_{\W/})^\op
\longto
\DGCat,
\hspace{.4cm}
[\W \to \Z] \squigto \ICoh_0(\Z^\wedge_\W)
$$
satisfies descent along \emph{any} map.
\end{cor}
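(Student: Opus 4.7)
The plan is to reduce descent for $\ICoh_0$ to descent for its constituent pieces, then exploit the right Kan extension construction of $\ICoh$ on laft prestacks to get descent along an arbitrary map.

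First, I would use the defining pullback square \eqref{diag:definition-ICoh0}, which provides a functorial equivalence
$$
\ICoh_0(\Z^\wedge_\W) \;\simeq\; \ICoh(\Z^\wedge_\W) \ustimes{\ICoh(\W)} \QCoh(\W)
$$
in the variable $[\W \to \Z] \in (\Stkevcoclfp)_{/\W}$. Since fiber products of DG categories commute with arbitrary limits in $\DGCat$, descent for the left-hand side along a map $\alpha$ in $(\Stkevcoclfp)_{/\W}$ reduces to descent for each of the three right-hand side factors. The two assignments $[\W \to \Z] \squigto \ICoh(\W)$ and $[\W \to \Z] \squigto \QCoh(\W)$ are constant in the second coordinate, and therefore trivially satisfy descent along any $\alpha$. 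Thus the content of the corollary reduces to descent for $\Z \squigto \ICoh(\Z^\wedge_\W)$.

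For the latter, I would invoke the construction of $\ICoh$ on $\PreStk_\laft$ as the right Kan extension of $\ICoh^!: (\Affaftevcoc)^\op \to \DGCat$ along the inclusion $\Affaftevcoc \hookrightarrow \PreStk_\laft$, as recalled in the paragraph on extension to laft prestacks. By the universal property of right Kan extensions, $\ICoh^!$ automatically converts colimits of laft prestacks into limits of DG categories. Consequently, the desired descent follows from the claim that the formal completion functor $\Z \squigto \Z^\wedge_\W$ on $(\Stkevcoclfp)_{/\W}$ sends the Čech nerve of $\alpha$ to a colimit diagram in $\PreStk_\laft$.

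The main obstacle is precisely this last geometric compatibility. It can be tested termwise on affine test schemes $S \in \Affaftevcoc$, where
$$
\Z^\wedge_\W(S) \;\simeq\; \Z(S) \ustimes{\Z_\dR(S)} \W_\dR(S),
$$
and the check reduces to a statement about fiber products in $\inftyGrpd$ involving the fixed base $\W_\dR(S)$ and the variable $\Z$. One must verify that the Čech nerve construction in $(\Stkevcoclfp)_{/\W}$, combined with the fiber product against $\W_\dR(S)$, produces a colimit in $\inftyGrpd$. Given that this is a routine (though not entirely formal) diagram chase, the rest of the argument goes through along the pattern outlined above.
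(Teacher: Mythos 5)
Your reduction of the statement to descent for $\Z \squigto \ICoh(\Z^\wedge_\W)$ via the defining fiber square \eqref{diag:definition-ICoh0} is valid (the other two factors give constant cosimplicial objects over the weakly contractible category $\bDelta$, so they contribute nothing to the totalization). The gap is in the step you defer as routine: it is \emph{not} true that the formal completion functor sends the \v{C}ech nerve of a map $\Y \to \Z$ over $\W$ to a colimit diagram in $\PreStk_\laft$. Colimits of prestacks are computed objectwise, and the map $\Y^\wedge_\W \to \Z^\wedge_\W$, while a nil-isomorphism, is typically far from surjective on $S$-points for derived or non-reduced affine $S$; the objectwise geometric realization of its \v{C}ech nerve only recovers the ``image''. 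For a concrete failure, take $\W = \Y = \pt$ and $\Z = \AA^1$ marked at the origin: on $S = \Spec(\kk[x]/x^2)$ the completion $(\AA^1)^\wedge_0$ has a one-parameter family of $S$-points, whereas every term of the \v{C}ech nerve of $\pt \to (\AA^1)^\wedge_0$ has a contractible space of $S$-points, so the realization evaluated on $S$ is a point. Consequently the universal property of the right Kan extension gives you nothing here: descent of $\ICoh$ along nil-isomorphisms is a genuine theorem, not a formal consequence of the definition.

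The paper's proof supplies exactly the missing mechanism, and applies it directly to $\ICoh_0$ without detouring through $\ICoh$: for any factorization $\W \to \Y \to \Z$, the induced morphism of arrows $[\W \to \Y] \to [\W \to \Z]$ is a ``nil-isomorphism'', so by \eqref{adj:easy-adjunction-nil-iso-ev-coc-case} each structure functor of the cosimplicial category $\ICoh_0(\fZ^\bullet)$ is a $(!,0)$-pullback that is conservative and admits a left adjoint (the $(*,0)$-pushforward) satisfying base change. The cosimplicial category therefore satisfies the left Beck--Chevalley condition, and the comparison functor $\ICoh_0(\Z^\wedge_\W) \to \Tot\bigt{\ICoh_0(\fZ^\bullet)}$ is an equivalence by the Barr--Beck--Lurie argument. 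To salvage your outline you would need to replace the prestack-level colimit claim with this monadicity argument (which then makes the initial reduction to $\ICoh$ unnecessary).
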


\begin{proof}
Let $\W \to \Y \to \Z$ be a string in $\Stkevcoclfp$, giving rise to a \virg{nil-isomorphism} $\xi: [\W \to \Y] \to [\W \to \Z]$. Denote by $\fZ^\bullet$ the Cech resolution of $\xi$.
The $(!,0)$-pullback functors yield a cosimplicial category $\ICoh_0(\fZ^{\bullet})$ and a functor
$$
\ICoh_0(\Z^\wedge_\W)
\longto 
\Tot \Bigt{  \ICoh_0(\fZ^{\bullet}) },
$$
which we need to prove to be an equivalence. The cosimplicial category in question satisfies the left Beck-Chevalley condition: this follows from the adjunction 
\begin{equation} \label{adj:easy-adjunction-nil-iso-ev-coc-case}
\begin{tikzpicture}[scale=1.5]
\node (a) at (0,1) {$\alpha_{*,0} : \ICoh_0(\Y^\wedge_\W)$};
\node (b) at (4,1) {$\ICoh_0(\Z^\wedge_\W) : \alpha^{!,0}$};
\path[->,font=\scriptsize,>=angle 90]
([yshift= 1.5pt]a.east) edge node[above] {$ $} ([yshift= 1.5pt]b.west);
\path[->,font=\scriptsize,>=angle 90]
([yshift= -1.5pt]b.west) edge node[below] {$ $} ([yshift= -1.5pt]a.east);
\end{tikzpicture}
\end{equation}
induced by the nil-isomorphism $\Y^\wedge_\W \to \Z^\wedge_\W$ (recall that the ind-coherent pushforward along such map preserves the $\ICoh_0$ subcategories).
Then we conclude by using the paradigm of \cite[Section C.1]{ShvCat}, which is based on \cite[Theorem 4.7.5.2]{HA}.
\end{proof}

\ssec{Exterior tensor products}

Consider a diagram $\U \to \V \to \Z \leftto \Y \leftto \X$ in $\Stkperfevcoclfp$. 
In this situation, $\QCoh(\Z)$ acts on $\ICoh_0(\V^\wedge_\U)$ and $\ICoh_0(\Y^\wedge_\X)$ by pullback.

\begin{prop} \label{prop:exterior-product-ICohzero-over-QCoh-ev-coc-case}
In the above situation, assume furthermore that $\U \times_\Z \X$ is bounded. Then the exterior tensor product descends to an equivalence 
\begin{equation} \label{eqn:exterior-product-ICohzero-over-QCoh-ev-coc-case}
\ICoh_0(\V^\wedge_\U)
\usotimes{\QCoh(\Z)}
\ICoh_0(\Y^\wedge_\X)
\xto{\; \; \simeq \; \; }
\ICoh_0 \Bigt{ 
(\V \times_{\Z} \Y)
^\wedge_
{(\U \times_{\Z} \X)}
}.
\end{equation}
\end{prop}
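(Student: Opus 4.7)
The plan is to reduce the claim to the analogous, already-known statement for ordinary ind-coherent sheaves, by exploiting the defining pullback square (\ref{diag:definition-ICoh0}) for $\ICoh_0$. The first step is to show that tensoring over $\QCoh(\Z)$ commutes with this pullback. Since $\Z$ is bounded and perfect, $\QCoh(\Z)$ is rigid symmetric monoidal (by the cited corollary of \cite{DG:Finiteness}), and all four vertices of each defining square are dualizable $\QCoh(\Z)$-module categories (they are compactly generated under our hypotheses). Using that relative tensor product over a rigid base preserves finite limits of dualizable modules whose structure functors admit continuous right adjoints — as is the case for the horizontal arrows $(\primef)^{!,0}$ and $(\primef)^!$, and for the vertical inclusions $\iota$ and $\Upsilon_\U$, cf. the monadic adjunction (\ref{adj:monadic adjunction for ICOHzero}) — one obtains an equivalence
\begin{equation*}
\ICoh_0(\V^\wedge_\U) \usotimes{\QCoh(\Z)} \ICoh_0(\Y^\wedge_\X)
\simeq
\Bigl[\ICoh(\V^\wedge_\U) \usotimes{\QCoh(\Z)} \ICoh(\Y^\wedge_\X)\Bigr]
\ustimes{\ICoh(\U) \usotimes{\QCoh(\Z)} \ICoh(\X)}
\Bigl[\QCoh(\U) \usotimes{\QCoh(\Z)} \QCoh(\X)\Bigr].
\end{equation*}

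Next, I would identify each relative tensor product appearing on the right. For the bottom-right vertex, the Ben-Zvi–Francis–Nadler theorem (applicable since $\U, \Z, \X$ are perfect) yields $\QCoh(\U) \otimes_{\QCoh(\Z)} \QCoh(\X) \simeq \QCoh(\U \times_\Z \X)$. For the $\ICoh$-terms, the analogous exterior product equivalences on stacks and on formal completions — which follow from Theorem \ref{thm:ICOH-corresp-book} combined with the self-duality of $\ICoh$ (Proposition \ref{prop:ICOH-formal-completion-self-dual}) and the fact that push-forwards along the two projections are defined in $\Corr(\PreStk_\laft)$ — give
$$\ICoh(\U) \usotimes{\QCoh(\Z)} \ICoh(\X) \simeq \ICoh(\U \times_\Z \X), \qquad
\ICoh(\V^\wedge_\U) \usotimes{\QCoh(\Z)} \ICoh(\Y^\wedge_\X) \simeq \ICoh\bigl((\V\times_\Z\Y)^\wedge_{\U\times_\Z\X}\bigr).$$
Substituting, the right-hand side of the previous display becomes
$$\ICoh\bigl((\V\times_\Z\Y)^\wedge_{\U\times_\Z\X}\bigr) \ustimes{\ICoh(\U \times_\Z \X)} \QCoh(\U \times_\Z \X),$$
which by definition is precisely $\ICoh_0\bigl((\V\times_\Z\Y)^\wedge_{\U\times_\Z\X}\bigr)$. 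The boundedness of $\U \times_\Z \X$ is exactly what is needed to make sense of this last category. A final compatibility check — best performed on the compact generators $(\primef)_{*,0}(P) \boxtimes (\primeg)_{*,0}(Q)$ with $P \in \Perf(\U)$ and $Q \in \Perf(\X)$ — confirms that the chain of equivalences matches the natural exterior product functor.

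The main obstacle is the commutation of relative tensor product with the defining pullback in the first paragraph: although the rigidity of $\QCoh(\Z)$ together with the dualizability of all the pieces makes the statement expected, making it rigorous requires a careful argument involving duals. A clean way to bypass this, if needed, is to re-derive the equivalence directly via the monadic adjunction (\ref{adj:monadic adjunction for ICOHzero}): both sides are monadic over $\QCoh(\U \times_\Z \X)$, so it suffices to identify the monads. The monad on the right-hand side is $\UQ(\Tang_{(\U\times_\Z\X)/(\V\times_\Z\Y)})$, and the Leibniz-type decomposition of the relative tangent complex of a fiber product, together with the compatibility of the PBW filtration with tensor products, identifies this with the external tensor product of $\UQ(\Tang_{\U/\V})$ and $\UQ(\Tang_{\X/\Y})$, which is exactly the monad controlling the left-hand side.
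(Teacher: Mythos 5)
Your closing ``bypass'' paragraph is, essentially verbatim, the paper's actual proof: both sides are exhibited as modules for monads acting on $\QCoh(\U \times_\Z \X)$, the monads are compared on the generators $p^*P \otimes q^*Q$ with $P \in \QCoh(\U)$, $Q \in \QCoh(\X)$, and the identification reduces, via the PBW filtration, to the elementary decomposition $\Tang_{\U \times_\Z \X/ \V \times_\Z \Y} \simeq p^! \Tang_{\U/\V} \oplus q^! \Tang_{\X/\Y}$. So the content of a correct proof is present in your write-up; I would promote that paragraph to the main argument rather than leaving it as a contingency.

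The primary route you propose has gaps that go beyond the one you flag. First, commuting $\usotimes{\QCoh(\Z)}$ past the defining pullback square is not merely a matter of ``a careful argument involving duals'': you are tensoring two pullback squares against each other, so even granting that $- \usotimes{\QCoh(\Z)} \CM$ preserves limits for each fixed dualizable $\CM$, what you obtain is the limit over the product of the two cospan shapes (a $3\times 3$ grid), and identifying that with the single ``diagonal'' cospan you wrote down requires an additional cofinality or Beck--Chevalley argument. Relatedly, $\ICoh_0$ is a full subcategory of $\ICoh$, and tensor products of full subcategories are in general \emph{not} full subcategories of the tensor product; this is precisely the kind of statement that the monadic description is designed to circumvent. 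Second, the relative K\"unneth equivalence $\ICoh(\V^\wedge_\U) \usotimes{\QCoh(\Z)} \ICoh(\Y^\wedge_\X) \simeq \ICoh\bigt{(\V \times_\Z \Y)^\wedge_{\U \times_\Z \X}}$ is nowhere established in the paper (only the absolute functoriality of Theorem \ref{thm:ICOH-corresp-formal completions} is available) and is of comparable difficulty to the proposition itself, so invoking it makes the argument circular in spirit. None of this is fatal to your submission, since the monadic argument you sketch at the end stands on its own; but as written, the first two paragraphs should be regarded as motivation rather than proof.
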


\begin{proof}
Both DG categories are modules for monads acting on $\QCoh(\U \times_\Z \X)$.
Note that $\QCoh(\U \times_\Z \X)$ is generated by objects of the form $p^*P \otimes q^*Q$ for $P \in \QCoh(\U)$ and $Q \in \QCoh(\X)$, where $p: \U \times_\Z \X \to \U$ and $q: \U \times_\Z \X \to \X$ are the two projections.
We will identify the values of the two monads acting on such generators.

\medskip

The monad on the LHS is given by
$$
p^*P \otimes q^*Q
\squigto
p^* (\UQCoh(\Tang_{\U/\V})(P))
\otimes
q^* (\UQCoh(\Tang_{\X/\Y})(Q)),
$$
while the monad on the RHS by
$$
p^*P \otimes q^*Q
\squigto
 \UQCoh(\Tang_{\U \times_\Z \X/ \V \times_\Z \Y})
(p^*P \otimes q^*Q).
$$
Now, the elementary isomorphism
$$
\LL_{\U \times_\Z \X/ \V \times_\Z \Y}
\simeq
p^* \LL_{\U/\V} \oplus q^* \LL_{ \X/ \Y},
$$
taking place in $\QCoh(\U \times_\Z \X)$, yields the assertion upon dualization.
\end{proof}

\sssec{} \label{sssec:exceptional pull and push in bounded case}

As a consequence of the above exterior product formula, we obtain another kind of functor, the $?$-pushforward, for $\ICoh_0$. To construct it, consider maps $\X \to \Z \leftto \Y$ in $\Stkperfevcoclfp$, with the property that $\X \times_\Z \Y$ is also bounded.
We view the resulting cartesian diagram
\begin{equation} 
\nonumber
\begin{tikzpicture}[scale=1.5]
\node (00) at (0,0) {$\X$};
\node (10) at (1.5,0) {$\Z$};
\node (01) at (0,.8) {$\X \times_\Z \Y $};
\node (11) at (1.5,.8) {$\Y$};
\path[->,font=\scriptsize,>=angle 90]
(00.east) edge node[above] {$h$} (10.west);
\path[->,font=\scriptsize,>=angle 90]
(01.east) edge node[above] {$ $} (11.west);
\path[->,font=\scriptsize,>=angle 90]
(01.south) edge node[right] {$ $} (00.north);
\path[->,font=\scriptsize,>=angle 90]
(11.south) edge node[right] {$ $} (10.north);
\end{tikzpicture}
\end{equation}
as a morphism 
$$
\eta:
[\X \times_\Z \Y \to \X]
\longto
[\Y \to \Z]  
$$ 
in $\Arr = \Arr(\Stkperfevcoclfp)$.
Then the equivalence
\begin{equation} \nonumber
\QCoh(\X)
\usotimes{\QCoh(\Y)}
\ICoh_0(\Z^\wedge_\Y)
\xto{\; \; \simeq \; \; }
\ICoh_0 \Bigt{ 
\X
^\wedge_
{(\X \times_{\Z} \Y)}
}
\end{equation}
of Proposition \ref{prop:exterior-product-ICohzero-over-QCoh-ev-coc-case}, together with the usual adjunction $h^*: \QCoh(\Z) \rightleftarrows \QCoh(\X): h_*$, yields the adjunction
\begin{equation} \label{adj:exceptional-adjunction}
\begin{tikzpicture}[scale=1.5]
\node (a) at (0,1) {$\eta^{!,0}: \ICoh_0(\Z^\wedge_\Y)   $};
\node (b) at (3,1) {$  \ICoh_0(\X^\wedge_{\X \times_\Z \Y}) : \eta_?$.};
\path[->,font=\scriptsize,>=angle 90]
([yshift= 1.5pt]a.east) edge node[above] {  } ([yshift= 1.5pt]b.west);
\path[->,font=\scriptsize,>=angle 90]
([yshift= -1.5pt]b.west) edge node[below] { } ([yshift= -1.5pt]a.east);
\end{tikzpicture}
\end{equation}

\ssec{The monoidal category $\H(\Y)$} \label{ssec:defn of H}

In this short section, we officially introduce the main object of this paper: the monoidal category $\H(\Y)$ attached to $\Y \in \Stkevcoclfp$.

\sssec{}

Let $\Y$ be as above and recall the convolution monoidal structure on $\ICoh(\Yform)$ defined by pull-push along the correspondence 
$$
(\Y \times \Y)^\wedge_\Y \times (\Y \times \Y)^\wedge_\Y 
\xleftarrow{\wh p_{12} \times \wh p_{23}}
(\Y \times \Y \times \Y)^\wedge_{\Y} 
\xto{\wh p_{13}}
(\Y \times \Y)^\wedge_\Y.
$$
It is clear that
$$
\iota:
\ICoh_0((\Y \times \Y)^\wedge_\Y )
\hto
\ICoh( (\Y \times \Y)^\wedge_\Y  )
$$
is the inclusion of a monoidal sub-category: indeed, it suffices to show that convolution preserves the subcategory $\primeDelta_{*,0}(\QCoh(\Y)) \subseteq \ICoh( (\Y \times \Y)^\wedge_\Y )$, which is a simple diagram chase.

\sssec{}

The same reasoning shows that the functor
$$
\primeDelta_{*,0}:
\QCoh(\Y)
\longto
\ICoh_0((\Y \times \Y)^\wedge_\Y  )
$$
is monoidal: indeed, it can be written as the composition 
$$
\QCoh(\Y)
\xto{\Upsilon_\Y \,}
\ICoh(\Y)
\xto{\primeDelta_*^\ICoh}
\ICoh((\Y \times \Y)^\wedge_\Y )
$$
of monoidal functors.

\begin{defin}\label{sssec:official-definition of H(Y)}
We set $(\H(\Y), \star)$ to be the monoidal DG category
$$
\H(\Y)
:=
\ICoh_0( (\Y \times \Y)^\wedge_\Y),
$$
with the monoidal structure given by convolution, that is, the product induced by the correspondence
$$ 
\begin{tikzpicture}[scale=1.5]
\node (00) at (0,0) {$\Y^4$};
\node (10) at (1.5,0) {$\Y^3$ };
\node (01) at (0,.8) {$\Y^2$};
\node (11) at (1.5,.8) {$\Y$};
\node (21) at (3,.8) {$\Y$};
\node (20) at (3,0) {$\Y^2$};
\path[<-,font=\scriptsize,>=angle 90]
(00.east) edge node[above] {$p_{12} \times p_{23}$} (10.west);
\path[<-,font=\scriptsize,>=angle 90]
(01.east) edge node[above] {$\Delta$} (11.west);
\path[->,font=\scriptsize,>=angle 90]
(11.east) edge node[above] {$\id_\Y$} (21.west);
\path[->,font=\scriptsize,>=angle 90]
(10.east) edge node[above] {$p_{13}$} (20.west);
\path[->,font=\scriptsize,>=angle 90]
(01.south) edge node[right] {$\Delta \times \Delta$} (00.north);
\path[->,font=\scriptsize,>=angle 90]
(11.south) edge node[right] {$\Delta$} (10.north);
\path[->,font=\scriptsize,>=angle 90]
(21.south) edge node[right] {$\Delta$} (20.north);
\end{tikzpicture}
$$
in $\Arr$. The identity for convolution is of course the object $\bbone_{\H(\Y)} := {\primeDelta}_*^\ICoh(\omega_\Y)$.

\end{defin}

\begin{lem} \label{lem:pivotal}
Suppose that $\Y$ is perfect. Then the monoidal DG category $(\H(\Y), \star)$ is compactly generated, rigid and pivotal (see Sections \ref{sssec:rigidity conventions} and \ref{sssec:pivotality} for the definitions).
\end{lem}

\begin{proof}
Compact generation and rigidity follow immediately from the fact that
$$
\primeDelta_{*,0}: \QCoh(\Y) \longto \ICoh_0( (\Y \times \Y)^\wedge_\Y  )
$$ 
is monoidal and generates the target under colimits.
As for pivotality, we need to show that left and right duals of compact objects can be functorially identified. Note that $\H(\Y)$ is self-dual in two different looking ways. The first way is a consequence of rigidity: any rigid monoidal DG category is self-dual with evaluation given by $u^R \circ m$.
In our case, this reads as
$$
\ev_\star: \H(\Y) \otimes \H(\Y) \longto
\H(\Y), 
\hspace{.4cm}
(\F, \G) \mapsto \HHom_{\H(\Y)}(\bbone_{\H(\Y)}, \F \star \G).
$$
The second self-duality datum comes from the general theory of $\ICoh_0$, as proven in Section \ref{ssec:duality of ICohzero}: it determines a second evaluation functor that we will denote by $\ev_\otimes$.
In view of \cite[Lemma 4.6.1.10]{HA}, the two duality data are canonically identified: $\ev_\star \simeq \ev_\otimes$. Denote by $\bbD$ the contravariant involution on $\H(\Y)^\cpt$ induced by $ \ev_\otimes$ (explicitly, $\bbD$ is the composition of Serre duality for $\ICoh(\Yform)$ and $\sigma^{-1}$). Then, for $\F \in \H(\Y)^\cpt$, we have
$$
\HHom_{\H(\Y)}(\DD(\F), -)
\simeq
\ev_\otimes (\F \otimes -)
\simeq
\ev_\star(\F \otimes -)
=
\HHom_{\H(\Y)}(\bbone_{\H(\Y)}, \F \star -)
\simeq
\HHom_{\H(\Y)}({}^\vee\F,  -).
$$
It follows that ${}^\vee\F \simeq \DD(\F)$ naturally: this fact yields the desired pivotal structure.
\end{proof}

\sec{Beyond the bounded case} \label{sec:beyond-ev-coc}

The computation of $\Z(\H(\Y))$ sketched in Section \ref{sec:outline} showed the need to extend the definition of $\ICoh_0(\Z^\wedge_\Y)$ to the case where $\Y \in \Stk_\lfp$ is not necessarily bounded, but still perfect. 
This task is the goal of the present section.

Inspired by the equivalence (\ref{eqn:def of ICOHzero with universal enveloping alg}), we will define $\ICoh_0(\Z^\wedge_\Y)$ as the DG category of modules for a monad $\UQ(\Tang_{\Y/\Z})$ acting on $\QCoh(\Y)$, where $\UQ(\Tang_{\Y/\Z})$ is defined so that:
\begin{itemize}
\item
its meaning coincides with the already established one when $\Y$ is bounded;
\item
it is equivalent to $\UQ(\TangQ_{\Y/\Z}) \otimes - $, when $\Tang_{\Y/\Z}$ is a Lie algebra.
\end{itemize}
To define such monad, we will use the PBW filtration of the universal envelope of a Lie algebroid.

\medskip

After this is done, we will discuss the functoriality of $\ICoh_0$ in the unbounded context. Such functoriality is not as rich as the one discussed in the previous chapter, the issue being that we can no longer rely on the functoriality of ind-coherent sheaves on formal completions.
For instance, in the present context, the $(*,0)$-pushforward will be defined only for maps in $\Arr(\Stkperflfp)$ coming from diagrams of the form
$$
[\W \to \X] \longto [\W \to \Y].
$$
Similarly, the $(!,0)$-pullback will be defined only for maps with \emph{cartesian} associated square.
These two kinds of functors are \virg{good} because they preserve compact objects, whence they admit continuous right adjoints: the so-called $?$-pullback and $?$-pushforward, respectively.

We also discuss descent and tensoring up with $\QCoh$. The material of this section is essential to carry out the computation $\Z(\H(\Y)) \simeq \vDmod(\LY)$, performed in Section \ref{sec:center}.

\ssec{The definition} \label{ssec: defin of Icohzero unbdd case}

Consider a Lie algebroid $\fL$ on $\Y$ and its universal envelope $\U(\fL)$, which is a monad acting on $\ICoh(\Y)$. By \cite[Volume 2, Chapter 9, Section 6.1]{Book},  the assignment $\fL \squigto \U(\fL)$ upgrades to a functor
\begin{equation}  \label{funct:Lie algebroids to monads wihth filtration on ICOH}
\LieAlgbd(\Y) \longto \Alg(\End(\ICoh(\Y))^{\Fil, \geq 0}).
\end{equation}
The target $\infty$-category will be referred to as the $\infty$-category of \emph{monads (acting on $\ICoh(\Y)$) with non-negative filtration}.

We will assume familiarity with the Rees-Simpson point of view that filtered objects are objects that lie over the stack $\A^1/\bbG_m$: we refer to \cite[Volume 2, Chapter 9, Section 1.3]{Book} for the main results in the subject and for the notation.

\sssec{}

Let $\fL \in \LieAlgbd(\Y)$ be such that its underlying ind-coherent sheaf $\oblv_{\LieAlgbd}(\fL)$ belongs to the subcategory $\Upsilon_\Y(\Perf(\Y)) \subset \ICoh(\Y)$. In this situation, we will show that the monad $\U(\fL)$ induces a canonical monad acting on $\QCoh(\Y)$, denoted $\UQ(\fL)$.
We need the following paradigm, whch goes under the slogan: \emph{the filtered renormalization of a filtered monad is also a filtered monad}.

\begin{lem}
Let $\mu$ be a monad with non-negative filtration acting on a (cocomplete) DG category $\C$. Let $\C_0 \subset \C$ be a non-cocomplete subcategory with the property that, for each $n \geq 0$, the $n^{th}$ piece of the filtration $\mu^{\leq n}$ preserves $\C_0$.\footnote{Note that we do not require that $\mu$ have this property.}
Let $\Ind(\C_0)$ be the ind-completion of $\C_0$ and $\wt \mu^{\leq n}$ the ind-completion of the functor $\mu^{\leq n}: \C_0 \to \C_0$. Then the non-negatively filtered functor
$$
\wt \mu := \uscolim{n \geq 0} \, \wt\mu^{\leq n}: 
\Ind(\C_0) \longto \Ind(\C_0)
$$
admits a canonical structure of monad with filtration.
\end{lem}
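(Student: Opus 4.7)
The plan is to recast the statement in terms of algebras in monoidal $\infty$-categories and then invoke the monoidal functoriality of ind-completion. Recall that, by definition, a monad on $\C$ with non-negative filtration is an object of $\Alg(\End(\C)^{\Fil, \geq 0})$; concretely, a sequence $\mu^{\leq 0} \to \mu^{\leq 1} \to \cdots$ of endofunctors of $\C$, equipped with a unit $\id_\C \to \mu^{\leq 0}$ and coherent composition maps $\mu^{\leq n} \circ \mu^{\leq m} \to \mu^{\leq n+m}$, whose colimit recovers the underlying monad $\mu$.

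The first step is to restrict the entire filtered monad to $\C_0$, i.e., to produce an object of $\Alg(\Fun(\C_0,\C_0)^{\Fil, \geq 0})$. By hypothesis each $\mu^{\leq n}$ preserves $\C_0$, and hence so does each composition $\mu^{\leq n} \circ \mu^{\leq m}$; therefore all structure maps (transitions, multiplications, unit) restrict to natural transformations between endofunctors of $\C_0$. Formally, the (non-full) subcategory of $\Fun(\C,\C)$ spanned by those functors preserving $\C_0$ is closed under composition, so restriction to $\C_0$ is a monoidal functor to $\Fun(\C_0,\C_0)$ and transports our filtered algebra to a filtered algebra in the target.

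The second step is to apply ind-completion. The assignment $F \squigto \wt F$, sending an endofunctor of $\C_0$ to its unique continuous extension to $\Ind(\C_0)$, defines a monoidal functor $\Fun(\C_0,\C_0) \longto \Fun(\Ind(\C_0),\Ind(\C_0))$ with respect to composition. Applying it degreewise and passing to $\Alg$ produces a filtered algebra $\{\wt\mu^{\leq n}\}$ in $\End(\Ind(\C_0))^{\Fil, \geq 0}$, whose underlying total monad is by construction the filtered colimit $\wt\mu = \uscolim{n} \wt\mu^{\leq n}$.

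The essential technical point, and the only real obstacle, is the monoidality of ind-completion on endofunctor categories, i.e. the natural equivalence $\wt{F \circ G} \simeq \wt F \circ \wt G$. This is a standard consequence of the universal property of $\Ind(\C_0)$ as the free cocompletion of $\C_0$: both sides are continuous extensions of $F \circ G$ along the Yoneda embedding $\C_0 \hto \Ind(\C_0)$ and therefore agree. Equivalently, one invokes the symmetric monoidality of $\Ind: \Cat \to \DGCatcont$, which automatically transports $E_1$-algebras to $E_1$-algebras. Once this compatibility is in hand, the remainder of the proof, including associativity, unitality, and compatibility of the filtration with the multiplication, is a formal diagram chase inherited from the corresponding structure on $\C$.
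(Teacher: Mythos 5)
Your proof is correct, and it shares the skeleton of the paper's argument (restrict the filtered monad to $\C_0$, then ind-extend, using that both operations are compatible with composition), but the technical packaging is genuinely different. The paper invokes the Rees-type equivalence $\End(\C)^{\Fil} \simeq \End_{\QCoh(\A^1/\GG_m)}(\C \otimes \QCoh(\A^1/\GG_m))$, so that a filtered monad becomes an honest algebra for \emph{composition} in a single monoidal category; restriction along $\C_0 \otimes \Perf(\A^1/\GG_m)$ and ind-extension are then manifestly monoidal and the whole proof is three lines, with no need to ever manipulate Day convolution. You instead work degreewise with the explicit data of a lax monoidal functor $\ZZ_{\geq 0} \to \End(-)$, i.e.\ the maps $\mu^{\leq n} \circ \mu^{\leq m} \to \mu^{\leq n+m}$, which is more elementary and makes the one real technical point ($\wt{F \circ G} \simeq \wt F \circ \wt G$, via the universal property of $\Ind(\C_0)$ as free cocompletion) fully explicit --- this is exactly the fact the paper uses silently when ind-extending. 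One caution with your formulation: since $\C_0$ is not cocomplete, Day convolution on $\End(\C_0)^{\Fil}$ need not literally exist as a monoidal structure, so the intermediate object should be phrased as a lax monoidal functor out of $\ZZ_{\geq 0}$ (which is what your description of the data amounts to) rather than as a Day-convolution algebra; the paper's use of the small monoidal category $\Perf(\A^1/\GG_m)$ sidesteps this entirely. Also, the aside invoking the symmetric monoidality of $\Ind: \Cat \to \DGCatcont$ is not quite the right citation, since $\End(\Ind(\C_0))$ is not $\Ind(\End(\C_0))$; your primary argument via unique continuous extension along the Yoneda embedding is the correct one and suffices.
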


\begin{proof} 
Consider the DG category
$$
\End(\C)^{\Fil} := \Fun(\ZZ, \End(\C)),
$$
equipped with the monoidal structure is given by Day convolution. 
A monad with filtration on $\C$ (for instance, $\mu$) is an algebra object of the above DG category.
Let us express $\End(\C)^{\Fil} $ using the stack $\A^1/\GG_m$.
By \cite[Volume 2, Chapter 9, Section 1.3]{Book},
$$
\End(\C)^{\Fil}
\simeq
\End(\C) \otimes {\QCoh(\A^1/\GG_m)}.
$$
Since $ {\QCoh(\A^1/\GG_m)}$ is dualizable (and self-dual), we further obtain that
$$
\End(\C)^{\Fil}
\simeq
\Fun(\C, \C \otimes \QCoh(\A^1/\GG_m) )
\simeq
\End_{\QCoh(\A^1/\GG_m)}
\bigt{\C \otimes \QCoh(\A^1/\GG_m) },
$$
in such a way that the Day convolution on the LHS corresponds to the obvious monoidal structure on the RHS.

Now, denote by $\C_0'$ the Karoubi completion of $\C_0$.
The assumption on $\mu$ means that its restriction along $\C_0' \otimes \Perf(\A^1/\GG_m)$ gives rise to an (automatically algebra) object $\mu'$ of 
$$
\End^{\on{non-cocomplete}}_{\Perf(\A^1/\GG_m)}
\bigt{\C_0' \otimes \Perf(\A^1/\GG_m) }.
$$
In the above formula, contrarily to our usage, we have considered exact endofunctors of the non-cocomplete DG category $\C_0' \otimes \Perf(\A^1/\GG_m) $.
By ind-extending $\mu'$, we obtain an object
$$
\wt\mu
\in 
\Alg
\bigt{
\End_{\QCoh(\A^1/\GG_m)}
\bigt{\Ind(\C_0) \otimes \QCoh(\A^1/\GG_m) }
}.
$$
Since $\Ind(\C_0) = \Ind(\C_0')$, the latter is the monad with filtration we were looking for.
\end{proof}

\sssec{}

We apply the above paradigm to $\mu = \U(\fL)$ acting on $\C = \ICoh(\Y)$ and we choose $\C_0$ to be the subcategory $\ICoh(\Y)^{\on{dualiz}}$ of dualizable objects in $\ICoh(\Y)$, with respect to the $\stackrel ! \otimes$ symmetric monoidal structure. By \cite[Remark 5.4.4]{ker-adj-duality}, the adjunction
$$
\Upsilon_\Y : \Perf(\Y)
\rightleftarrows
\ICoh(\Y)^{\on{dualiz}}:
\Phi_\Y := (\Upsilon_\Y)^R
$$
is an equivalence.
Thus, we just need to check that each $\U(\fL)^{\leq n}: \ICoh(\Y) \to \ICoh(\Y)$ preserves the subcategory $\ICoh(\Y)^{\on{dualiz}} \simeq \Upsilon_\Y(\Perf(\Y))$. This is obvious: the $n$-associated graded piece is the functor
$$
\Sym^n(\oblv_{\on{Lie-algbd}}(\fL)) \stackrel ! \otimes - :
\ICoh(\Y) \longto \ICoh(\Y)
$$ 
and $\oblv_{\on{Lie-algbd}}(\fL)$ belongs to $\Upsilon_\Y(\Perf(\Y))$ by assumption.

\sssec{}

Through the above construction, the functor (\ref{funct:Lie algebroids to monads wihth filtration on ICOH}) yields a functor
\begin{equation}  \label{funct: !-perfect Lie algebroids to monads wihth filtration on QCOH}
\UQ: 
\LieAlgbd(\Y) 
\ustimes{\ICoh(\Y)}
\ICoh(\Y)^{\on{dualiz}}
 \longto 
 \Alg(\End(\QCoh(\Y))^{\Fil, \geq 0}).
\end{equation}
Explicitly, the functor underlying the monad $\UQ(\fL)$ is the unique endofunctor of $\QCoh(\Y)$ whose restriction to $\Perf(\Y)$ is given by
$$
\Perf(\Y) \longto \QCoh(\Y),
\hspace{.4cm}
P \squigto 
\colim_{n\geq 0 } \Phi_\Y \circ \U^{\leq n}(\fL) \circ \Upsilon_\Y(P).
$$

\begin{rem}
Since $\Phi_\Y$ is discontinuous for $\Y$ unbounded, $\UQ(\fL)$ is different (in general) from the ind-extension of $P \squigto  \Phi_\Y \U(\fL) \Upsilon_\Y(P)$.
\end{rem}

The construction of $\UQ$ allows us to extend the definition of $\ICoh_0$ to the unbounded case as follows.

\begin{defin}[$\ICoh_0$ in the unbounded case] \label{defn:ICohzero unbounded case}

For $\Y \to \Z$ in $\Stk_\lfp$, with $\Y$ perfect but not necessarily bounded, we define
\begin{equation} \label{eqn:ICOHzero-unbounded case}
\ICoh_0(\Z^\wedge_\Y)
:=
\UQ(\Tang_{\Y/\Z}) \mod (\QCoh(\Y)).
\end{equation}
Clearly, such DG category is compactly generated and hence dualizable.

\end{defin}

\ssec{Basic functoriality}

\sssec{}

By construction, the assignment $\ICoh_0(-^\wedge_\W)$ underlies the structure of a covariant functor $\Stk_{\W/} \to \DGCat$.
Explicitly, a string $\W \to \X \to \Y$ in $\Stklfp$ gives a canonical map of Lie algebroids $\Tang_{\W/\X} \to \Tang_{\W/\Z}$ on $\W$ (equivalently, a map of $\xi: \X^\wedge_\W \to \Y^\wedge_\W$ of nil-isomorphisms under $\W$). Induction along the resulting algebra arrow
$$
\UQCoh(\Tang_{\W/\X} )
\longto
\UQCoh(\Tang_{\W/\Z} )
$$
yields the structure functor
$$
\xi_{*,0}:
\ICoh_0(\X^\wedge_\W) 
\longto 
\ICoh_0(\Y^\wedge_\W).
$$

\sssec{}

Since the functor $\xi_{*,0}$ preserves compact objects, it admits a continuous right adjoint that we shall denote by
$$
\xi^{?}: 
\ICoh_0(\Y^\wedge_\W) 
\longto 
\ICoh_0(\X^\wedge_\W).
$$
This is just the forgetful functor along the above maps of universal enveloping algebras.

\begin{rem}
If $\W$ is bounded, then $\xi^?$ is simply the $(!,0)$-pullback, while $\xi_{*,0}$ reduces to its already established meaning.
\end{rem} 

\begin{example}
For $\W= \X$, we just have a map $f: \X \to \Z$ in $\Stkperflfp$ and we rediscover the defining monadic adjunction
\begin{equation} \label{adj:monadic in general case}
\begin{tikzpicture}[scale=1.5]
\node (a) at (0,1) {$(\primef)_{*,0} : \QCoh(\X)$};
\node (b) at (3,1) {$\ICoh_0(\Y^\wedge_{\X}) : (\primef)^{?}$.};
\path[->,font=\scriptsize,>=angle 90]
([yshift= 1.5pt]a.east) edge node[above] {$ $} ([yshift= 1.5pt]b.west);
\path[->,font=\scriptsize,>=angle 90]
([yshift= -1.5pt]b.west) edge node[below] {$ $} ([yshift= -1.5pt]a.east);
\end{tikzpicture}
\end{equation}

\end{example}

\sssec{}

For $\Y$ bounded, $\ICoh_0(\Z^\wedge_\Y)$ has been defined as a full subcategory of $\ICoh(\Z^\wedge_\Y)$. 
For $\Y$ unbounded, we show there still is a canonical arrow $\ICoh_0(\Z^\wedge_\Y) \to \ICoh(\Z^\wedge_\Y)$, which is no longer an inclusion.

\begin{lem}
Let $f: \Y \to \Z$ be a morphism in $\Stkperflfp$. There is a natural isomorphism
$$
 \Upsilon_\Y \circ \UQ(\Tang_{\Y/\Z})
 \longto
\U(\Tang_{\Y/\Z}) \circ \Upsilon_\Y
$$
of non-negatively filtered functors from $\QCoh(\Y)$ to $\ICoh(\Y)$.
\end{lem}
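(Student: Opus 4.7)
The plan is to construct the desired natural transformation from the counit of the adjunction $\Upsilon_\Y \dashv \Phi_\Y$ and then verify it is an isomorphism by passing to the associated graded of the PBW filtration. By the very definition of $\UQ(\Tang_{\Y/\Z})$, its restriction to $\Perf(\Y)$ is the filtered functor $\Phi_\Y \circ \U(\Tang_{\Y/\Z}) \circ \Upsilon_\Y$. Post-composing with $\Upsilon_\Y$ and applying the counit $\Upsilon_\Y \circ \Phi_\Y \to \id_{\ICoh(\Y)}$ yields a filtered natural transformation
\[
\Upsilon_\Y \circ \Phi_\Y \circ \U(\Tang_{\Y/\Z}) \circ \Upsilon_\Y \longrightarrow \U(\Tang_{\Y/\Z}) \circ \Upsilon_\Y
\]
of functors $\Perf(\Y) \to \ICoh(\Y)$. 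Both sides are restrictions to $\Perf(\Y)$ of continuous filtered functors $\QCoh(\Y) \to \ICoh(\Y)$: the source by definition of $\UQ$, the target because $\Upsilon_\Y$ and each $\U^{\leq n}(\Tang_{\Y/\Z})$ are continuous and $\Y$ is perfect. Ind-extending along $\Perf(\Y) \hookrightarrow \QCoh(\Y)$ produces the candidate morphism on all of $\QCoh(\Y)$, which by construction respects the filtration.

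Next, I would verify that this morphism is an isomorphism by checking so on each associated graded piece. The $n$-th graded piece of the right-hand side is $\Sym^n(\Tang_{\Y/\Z}) \stackrel{!}{\otimes} \Upsilon_\Y(-)$; since $\Tang_{\Y/\Z}$ is perfect (by the lfp assumption), $\Sym^n(\Tang_{\Y/\Z}) \simeq \Upsilon_\Y(\Sym^n(\TangQ_{\Y/\Z}))$. Combined with the compatibility $\Upsilon_\Y(Q_1 \otimes Q_2) \simeq \Upsilon_\Y(Q_1) \stackrel{!}{\otimes} \Upsilon_\Y(Q_2)$, which follows from $\omega_\Y \stackrel{!}{\otimes} \omega_\Y \simeq \omega_\Y$, the right-hand side at graded piece $n$ evaluated on $P \in \Perf(\Y)$ simplifies to $\Upsilon_\Y(\Sym^n(\TangQ_{\Y/\Z}) \otimes P)$. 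Tracing through the definition of $\UQ$, the $n$-th graded piece of the left-hand side at $P$ is $\Upsilon_\Y \Phi_\Y$ applied to this very object, and the structure map is the counit. Since the object lies in the essential image $\Upsilon_\Y(\Perf(\Y)) \simeq \ICoh(\Y)^{\on{dualiz}}$, on which the counit of $\Upsilon_\Y \dashv \Phi_\Y$ is invertible, the graded-piece map is an isomorphism on $\Perf(\Y)$, hence on all of $\QCoh(\Y)$ after ind-extension.

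To conclude, I would induct on $n$ using the fiber sequences $\mu^{\leq n-1} \to \mu^{\leq n} \to \gr^n\mu$ on both sides: the base case $\mu^{\leq 0} = \gr^0\mu$ is handled directly by the graded-piece statement, and the inductive step follows from the iso on $\mu^{\leq n-1}$ together with the iso on $\gr^n\mu$ via the five-lemma. The only delicate point I anticipate is the coherent bookkeeping of filtrations, but since the construction of $\UQ$ and of the natural transformation is carried out in the framework of $\QCoh(\A^1/\GG_m)$-linear functors used to define $\UQ$, everything lives at the level of filtered monads from the outset and no additional verification is required.
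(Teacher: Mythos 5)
Your proof is correct and follows essentially the same route as the paper: reduce to $\Perf(\Y)$, obtain the map from the counit of $\Upsilon_\Y \dashv \Phi_\Y$, and use that each $\U^{\leq n}(\Tang_{\Y/\Z})$ preserves $\Upsilon_\Y(\Perf(\Y)) \simeq \ICoh(\Y)^{\on{dualiz}}$, where that counit is invertible. The paper simply invokes this preservation fact (already established when $\UQ$ was defined) rather than re-deriving it via the associated graded and a five-lemma induction as you do; the one point to keep straight is that for unbounded $\Y$ the unfiltered composite $\Phi_\Y \circ \U(\Tang_{\Y/\Z}) \circ \Upsilon_\Y$ is \emph{not} the restriction of $\UQ(\Tang_{\Y/\Z})$ to $\Perf(\Y)$ (the colimit over $n$ does not commute with the discontinuous $\Phi_\Y$), so the counit map must be constructed level-by-level in the filtration, which your filtered bookkeeping in the last paragraph in effect does.
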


\begin{proof}
If suffices to construct a filtered isomorphism 
$$
  \restr{\Upsilon_\Y 	\circ \UQ(\Tang_{\Y/\Z}) }{\Perf(\Y)}
  \longto
  \restr{\U(\Tang_{\Y/\Z}) \circ \Upsilon_\Y}{\Perf(\Y)}
$$
between the restrictions of the above functors to $\Perf(\Y)$. By definition, this amounts to giving a compatible $\NN$-family of isomorphisms
$$
 \restr{\Upsilon_\Y 	\circ \Phi_\Y\circ \U^{\leq n}(\Tang_{\Y/\Z})  \circ \Upsilon_\Y}{\Perf(\Y)}
 \longto
 \restr{\U^{\leq n}(\Tang_{\Y/\Z}) \circ \Upsilon_\Y}{\Perf(\Y)}
.
$$
These isomorphisms are manifest since $\U^{\leq n}(\Tang_{\Y/\Z})$ preserves $\Upsilon_\Y(\Perf(\Y))$.
\end{proof}

\sssec{} \label{sssec: passage from left to right unbdd setting}

This lemma shows that $\Upsilon_\Y: \QCoh(\Y) \to \ICoh(\Y)$ upgrades to a functor
$$
\Upsilon_{\Y/\Z}^{\fD}:
\ICoh_0(\Z^\wedge_\Y) \longto \ICoh(\Z^\wedge_\Y)
$$
sitting in the commutative diagram
\begin{equation} 
\nonumber
\begin{tikzpicture}[scale=1.5]
\node (00) at (0,0) {$\QCoh(\Y)$};
\node (10) at (2.3,0) {$\ICoh(\Y)$.};
\node (01) at (0,1) {$\ICoh_0(\Z^\wedge_\Y) $};
\node (11) at (2.3,1) {$\ICoh(\Z^\wedge_\Y)$};
\path[->,font=\scriptsize,>=angle 90]
(00.east) edge node[above] {$\Upsilon_\Y$} (10.west);
\path[->,font=\scriptsize,>=angle 90]
(01.east) edge node[above] {$\Upsilon^\fD_{\Y/\Z}$} (11.west);
\path[->,font=\scriptsize,>=angle 90]
(01.south) edge node[left] {$\oblv = (f)^{?}$} (00.north);
\path[->,font=\scriptsize,>=angle 90]
(11.south) edge node[right] {$\oblv = ('f)^!$} (10.north);
\end{tikzpicture}
\end{equation}
In view of the formulas
$$
\ICoh_0(\Z^\wedge_\Y) 
:= \UQ(\Tang_{\Y/\Z}) \mod (\QCoh(\Y))
\hspace{.4cm}
\ICoh(\Z^\wedge_\Y) 
:= \U(\Tang_{\Y/\Z}) \mod (\ICoh(\Y)),
$$
the functor $\Upsilon^\fD_{\Y/\Z}$ might regarded as the passage from left to right relative $\fD$-modules. When $\Z = \pt$, we write $\Upsilon^\fD_{\Y}$ rather than $\Upsilon^\fD_{\Y/\pt}$.

\sssec{}

As we see in the section, a special case (the absolute case where $\Z = \pt$) of Definition \ref{defn:ICohzero unbounded case} yields a new notion of $\fD$-module on a derived stack.

\begin{defin} [Derived D-modules]
For  $\Y \in \Stkperflfp$, we set
$$
\vDmod(\Y) := \ICoh_0(\pt^\wedge_\Y),
$$
that is, 
$$
\vDmod(\Y)  := \UQ(\Tang_{\Y}) \mod (\QCoh(\Y)).
$$
\end{defin}

\begin{rem}

The notation $\vDmod$ is in place to distinguish $\vDmod(\Y)$ from the usual DG category
$$
\Dmod(\Y) := \ICoh(\pt^\wedge_\Y) \simeq \U(\Tang_{\Y}) \mod (\ICoh(\Y))
$$
of $\fD$-modules on $\Y$.

\end{rem}

\ssec{D-modules and derived D-modules} \label{ssec: derived D-modules}

In this section, we compute the DG category $\vDmod(Y)$ for $Y = \Spec(A)$ an affine DG scheme locally of finite presentation.
We will first perform the computation in general and then apply it to the affine schemes of Example \ref{example:Koszul duality}.

\sssec{}

Since $Y$ is affine, $\vDmod(Y)$ is the DG category of modules over the DG algebra 
$$
\Gamma(Y, \UQ(\Tang_{Y})(\O_{Y})).
$$
To describe this DG algebra, we need to compute the monad $\U(\Tang_{Y})$, understand its PBW filtration, and then do the filtered renormalization.

\sssec{}

For any $m \geq 1$, denote by 
$$
\Delta_m \xto{s_m \times t_m} Y \times Y
$$
the $m^{th}$ infinitesimal neighbourhood of the diagonal, so that $(Y \times Y)^\wedge_Y \simeq \colim_m \Delta_m$. Each $\Delta_m$ is a formal groupoid acting on $Y$; we form the groupoid quotient $Y^{(m)} := Y/\Delta_m$ and let $p^{(m)} : Y \to Y^{(m)}$ be the structure projection map.

\sssec{}

By definition, the monad $\U(\Tang_{Y})$ is the push-pull $p^! p_*^\ICoh$ along $p: Y \to Y_\dR$. According to \cite[Volume 2, Chapter 9, Section 6.5]{Book}, the filtration on $\U(\Tang_Y)$ is induced by the filtration of the groupoid $(Y \times Y)^\wedge_Y$ by the infinitesimal neighbourhoods of the diagonal. Namely, it is given by the sequence
$$
m \squigto  (p^{(m)})^! (p^{(m)})_*^\ICoh
\simeq
(s_m)_*^{\ICoh} (t_m)^!.
$$
Thus, to compute $\UQ(\Tang_Y)$, we need to compute the filtered monad
$$
m \squigto \Phi_Y(s_m)_*^{\ICoh} (t_m)^!   \Upsilon_Y.
$$
Similarly, for the filtered algebra $\Gamma(Y, \UQ(\Tang_{Y})(\O_{Y}))$, we need to understand the filtered monad structure on
$$
m \squigto
W^{\leq m}
:=
 \CHom_{\ICoh(Y)}
\bigt{ 
\omega_Y, (s_m)_*^{\ICoh} (t_m)^! (\omega_Y)
}
$$

\sssec{}

In all cases, such structure is induced via base-change by the structure maps
$$
\alpha_{m,p}:\Delta_m \times_Y \Delta_p \to \Delta_{m+p}
$$
over $Y \times Y$, together with the similar ones for higher compositions.\footnote{We are just making explicit the fact that the functor $m \squigto \Delta_m$ is an algebra object in the (Day convolution) monoidal $\infty$-category of sequences of groupoids over $Y$.}
Now observe that each $\Delta_m$ is an affine DG scheme, say $\Delta_m \simeq \Spec(A_m)$: indeed each of them is a square-zero extension of the preceding one. Hence the maps $\alpha_{m,p}$ correspond to $(A,A)$-bilinear algebra maps $A_{m+p} \to A_m \otimes_A A_p$ going the other way.

\sssec{}

Let us now begin the computation of $W^{\leq m}$. We have
$$
W^{\leq m}:=
 \CHom_{\ICoh(Y)}
\bigt{ 
\omega_Y, (s_m)_*^{\ICoh} (t_m)^! (\omega_Y)
}
\simeq
 \CHom_{\ICoh(Y)}
\bigt{ 
\omega_Y, (s_m)_*^{\ICoh}(\omega_{\Delta_m})
},
$$
where we recall that $s_m: \Delta_m \to Y$ is the first of the two structure maps. It is clear that $s_m$ is proper and in fact finite. Before continuing, we need to establish another property of $s_m$.

\begin{lem}
The map $s_m$ is bounded (or eventually coconnective, in the terminology of \cite[Section 7.2]{ICoh}).
\end{lem}

\begin{proof}
In view of the reformulations given immediately after \cite[Definition 3.5.2]{ICoh}, it is enough to show that $R \otimes_A A_m$ is cohomologically bounded for any algebra map $A \to R$ with $R \simeq H^0(R)$. Since $A_m$ is constructed inductively as a sequence of square-zero extensions by symmetric powers of $\LL_A$, it suffices to show that $R \otimes_A \Sym^m \LL_A$ is cohomologically bounded. This is clear: the cotangent complex $\LL_A$ is perfect by assumption and so $R \otimes_A \Sym^m \LL_A$ is a perfect $R$-module, whence bounded (as $R$ is classical).
\end{proof}

\sssec{}

Since $s_m$ is proper and bounded, we can invoke \cite[Propositions 7.2.2 and 7.2.9(a)]{ICoh} to conclude that $(s_m)_*: \QCoh(\Delta_m) \to \QCoh(Y)$ admits a continuous right adjoint $(s_m)^{!,\QCoh}$ sitting in the commutative diagram
\begin{equation} 
\nonumber
\begin{tikzpicture}[scale=1.5]
\node (00) at (0,0) {$\ICoh(\Delta_m)$};
\node (10) at (2.3,0) {$\QCoh(\Delta_m)$.};
\node (01) at (0,1) {$\ICoh(Y)$};
\node (11) at (2.3,1) {$\QCoh(Y)$};
\path[->,font=\scriptsize,>=angle 90]
(00.east) edge node[above] {$ \Psi_{\Delta_m}$} (10.west);
\path[->,font=\scriptsize,>=angle 90]
(01.east) edge node[above] {$\Psi_Y$} (11.west);
\path[->,font=\scriptsize,>=angle 90]
(01.south) edge node[left] {$(s_m)^{!}$} (00.north);
\path[->,font=\scriptsize,>=angle 90]
(11.south) edge node[right] {$(s_m)^{!,\QCoh}$} (10.north);
\end{tikzpicture}
\end{equation}
By passing to dual functors, we deduce that $(s_m)^*$ admits a \emph{left} adjoint, to be denoted 
$$
(s_m)_!: \QCoh(\Delta_m) \to \QCoh(Y),
$$
which gets intertwined to $(s_m)_*^{\ICoh}$ by the $\Upsilon$ functors. In other words,
$$
(s_m)_*^{\ICoh}(\omega_{\Delta_m})
\simeq
(s_m)_*^{\ICoh}\Upsilon_{\Delta_m}(\O_{\Delta_m})
\simeq
\Upsilon_Y (s_m)_! (\O_{\Delta_m}).
$$

\sssec{}

We obtain that 
$$
W^{\leq m} 
\simeq
 \CHom_{\ICoh(Y)}
\bigt{ 
\omega_Y, \Upsilon_Y (s_m)_! (\O_{\Delta_m}))
}
\simeq
 \CHom_{\QCoh(Y)}
\bigt{ 
\O_Y, (s_m)_! (\O_{\Delta_m})
},
$$
where the last step used the fact that $(s_m)_! (\O_{\Delta_m})$ is automatically perfect and that $\Upsilon_Y$ is fully-faithful on $\Perf(Y)$ even if $Y$ is not bounded (see Section \ref{sssec: Ups fully fiath on perf}).

\sssec{}

The naive duality on $\Perf(Y)$ further yields
$$
W^{\leq m}
\simeq
 \CHom_{\QCoh(Y)}
\bigt{ 
((s_m)_! \O_{\Delta_m})^\vee, \O_Y
}
\simeq
 \CHom_{\QCoh(Y)}
\bigt{ 
(s_m)_* \O_{\Delta_m}, \O_Y
}
\simeq
 \CH om_{A\mod}
\bigt{ 
A_m, A
} :=(A_m)^*.
$$
Hence
$$
\Gamma(Y, \UQ(\Tang_{Y})(\O_{Y}))
\simeq 
W_A :=
\uscolim{m \geq 1} \, W^{\leq m}
\simeq
\uscolim{m \geq 1} \,
(A_m)^*,
$$
with algebra structure induced by the $A$-linear duals of the system of maps $A_{m+p} \to A_m \otimes_A A_p$.

\begin{example}

Consider $Y = \A^1 = \Spec(\kk[x])$. Then $\Delta_m \simeq \Spec A_m$ with $A_m = \kk[u,v]/(u-v)^{m+1}$. Let us declare that $s_m$ is the map corresponding to the natural map $\kk[v] \to A_m$. Then, changing variables $(u,v) \mapsto (u-v, v)$, we obtain that
$$
(A_m)^* \simeq \bigoplus_{i=0}^{m}  \partial^i \cdot \kk[v],
$$
as right $\kk[v]$-modules. An easy check shows that the left $\kk[v]$-module structure is the one giving rise to the Weyl algebra.
By taking products, an analogous result holds for $\vDmod(\A^p)$.
\end{example}

\begin{example} \label{example: Weyl algebras on Yn}

Consider now the derived affine scheme $Y_n = \Spec (\kk[u])$, where $n \geq 1$ and $u$ is a variable in cohomological degree $-n$.
In this case, $\Dmod(Y_n) \simeq \Vect$ since the classical truncation of $Y_n$ is just a point. 
One shows that $\vDmod(Y_n)$ is equivalent to $\kk \langle u, \partial_u \rangle \mod$, where $\kk \langle u, \partial_u \rangle$ is the Weyl graded algebra built on $\kk[u]$. Here $\partial_u$ has cohomological degree $n$.
This follows, as above, from the knowledge of the inifinitesimal neighbourhoods of the diagonal: they are obtained from $\kk[u_1, u_2] = H^*(Y_n \times Y_n, \O)$ by attaching a cell that kills off $(u_1 - u_2)^{m+1}$.
By taking products, an analogous result holds true for $\vDmod(\Spec(\Sym V^*[n]))$.
\end{example}

\sssec{}

In the setup of Example \ref{example: Weyl algebras on Yn}, there is a huge difference between the even and the odd cases. Set $W_n := \kk \langle u, \partial_u \rangle$.
Unraveling the constructions, the natural functor 
$$
\Upsilon_{Y_n}^\fD:
\vDmod(Y_n) \longto \Dmod(Y_n) \simeq \Vect
$$
goes over to the functor 
$$
\phi_{n}:
W_n \mod \xto{  \kk[\partial_u] \usotimes{W_n} -  } \kk \mod,
$$
where the right $W_n$-module structure on $\kk[\partial_u] $ is the one induced by the isomorphism $\kk[\partial_u] 
  \simeq \kk \otimes_{\kk[u]} W_n$.

\begin{cor} \label{cor:vDmod on Yn}
The functor $\Upsilon_{Y_n}^\fD$ is an equivalence if and only if $n$ is odd, that is, if and only if $Y_n$ is bounded.
\end{cor}

\begin{proof}
When $n$ is odd, $Y_n$ is bounded and thus the functor in question is an equivalence by the general theory. In particular this implies that the Weyl algebra on an odd vector space is Morita equivalent to $\kk$.
If $n = 2m$ is even, then we claim that the functor is not an equivalence. While this can proven directly, we prefer to give a quick argument that uses the shift of grading trick introduced in \cite[Appendix A]{AG}.\footnote{See \cite[Section 13.4]{ShvCat}, \cite[Section 1]{strong-gluing}, \cite[Sections 3.2, 3.3]{antitemp}, \cite[Section 3.2]{DL-spectral} for other applications of this trick.}
Since $\phi_n$ is $\GG_m$-equivariant, we can apply the shift of grading $m$ times to cancel the shifts. This shows that $\phi_{n}$ is an equivalence if and only if so is $\phi_{0}$. But the latter is the functor $\Dmod(\A^1) \to \Vect$ of $!$-restriction at $0$, which is obviously not an equivalence.
\end{proof}

\sssec{} \label{sssec:Weyl alg da fare}

Let us return to the case of a general affine scheme $Y = \Spec(A)$.
In regard to the formula $\vDmod(Y) \simeq W_A \mod$ established above, we always expect $W_A$ to be a Weyl algebra. For instance, if $A = (\kk[x_i], d)$ is a quasi-free commutative DG algebra with finitely many variables, we expect $W_A$ to look as follows:
\begin{itemize}
\item
as graded vector space, $W_A := \kk[x_i, \partial_i]$, with $\deg(\partial_i) = -\deg(x_i)$;
\item
the differential $d_W$ of $W_A$ is determined by
$$
d_W(x_i) = d(x_i) =: f_i, 
\hspace{.4cm}
d_W(\partial_i) = \Sum_j \frac{\partial f_j}{\partial x_i} \partial_j;
$$
\item
the algebra structure is determined by the super-commutation relations $[x_i, x_j]= 0$, $[\partial_i, \partial_j] = 0$,  $[\partial_i, x_j] = \delta_{ij}$.
\end{itemize}
We defer this computation to a future work.

\begin{cor}
As above, let $Y=\Spec(A)$ with $A = (\kk[x_i], d)$ is a quasi-free commutative DG algebra. Then we have $\Dmod(Y) \simeq W_B \mod$, where $B$ is the quasi-free commutative DG algebra obtained from $A$ by discarding all $x_i$ of degrees $\leq -2$. 
\end{cor}

\begin{proof}
By definition, $\Dmod(Y) \simeq \Dmod(Y')$ for any map $Y \to Y'$ that is an isomorphism on the classical truncations of $Y$ and $Y'$. This is the case for the map $\Spec(B) \to Y$ induced by the projection $A \to B$, whence $\Dmod(Y) \simeq \Dmod(\Spec(B))$. Since $\Spec(B)$ is quasi-smooth, it is bounded; we deduce that $\Dmod(\Spec(B)) \simeq \vDmod(\Spec(B))$. It remains to apply Section \ref{sssec:Weyl alg da fare}.
\end{proof}

\ssec{Descent in the \virg{second variable}}

In this section, we generalize Corollary \ref{cor:descent for ?-pullback in second variable}.

\begin{prop} \label{prop:descent for ICOHzero-general-case}
For any fixed $\W \in \Stkperflfp$, the contravariant functor $\ICoh_0((-)^\wedge_\W)$, under $?$-pullbacks, satisfies descent along \emph{any} map in $(\Stkperflfp)_{\W/}$. 
\end{prop}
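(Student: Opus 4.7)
The plan is to adapt the proof of Corollary \ref{cor:descent for ?-pullback in second variable} to the unbounded setting, with the monadic adjunction (\ref{adj:monadic in general case}) taking the place of (\ref{adj:easy-adjunction-nil-iso-ev-coc-case}). Given a morphism $\xi : [\W \to \Y] \to [\W \to \Z]$ in $(\Stkperflfp)_{\W/}$, I will form its Cech nerve $\fZ^\bullet$, whose $n$-th term is $[\W \to \Y \times_\Z \cdots \times_\Z \Y]$ with structure map the diagonal, and apply $\ICoh_0$ via $?$-pullbacks to obtain an augmented cosimplicial DG category
$$
\ICoh_0(\Z^\wedge_\W) \longto \Tot \Bigt{ \ICoh_0((\fZ^\bullet)^\wedge_\W) }.
$$
The goal is to show that this augmentation is an equivalence.

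The main mechanism will be the monadic presentation over $\QCoh(\W)$. By (\ref{adj:monadic in general case}), each $\ICoh_0((\fZ^n)^\wedge_\W)$ is canonically identified with the category of modules over $\UQ(\Tang_{\W/\fZ^n})$ in $\QCoh(\W)$. Using the functoriality of $\Tang_{\W/-}$ and of $\UQ$ recorded in (\ref{funct: !-perfect Lie algebroids to monads wihth filtration on QCOH}), the face operators of the cosimplicial object $\ICoh_0((\fZ^\bullet)^\wedge_\W)$ become restrictions along explicit morphisms of monads on $\QCoh(\W)$. Consequently, each face admits a continuous left adjoint (induction), and each right adjoint is conservative (since post-composition with the forgetful to $\QCoh(\W)$ is already conservative). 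The left Beck-Chevalley condition then holds automatically, as inductions along maps of monads commute with the forgetful functors to $\QCoh(\W)$.

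Granted these inputs, the conclusion will be formal: the cosimplicial Barr-Beck-Lurie formalism identifies $\Tot \bigt{ \ICoh_0((\fZ^\bullet)^\wedge_\W) }$ with the category of modules for the augmenting monad, which is $\UQ(\Tang_{\W/\Z})$ and thus $\ICoh_0(\Z^\wedge_\W)$. I expect the main obstacle to be purely bookkeeping: verifying that the cosimplicial diagram of monads $\UQ(\Tang_{\W/\fZ^\bullet})$ obtained stack-theoretically from the Cech nerve genuinely matches the one produced by the cosimplicial structure on $\ICoh_0((\fZ^\bullet)^\wedge_\W)$ via the $?$-pullbacks. This is a compatibility between the functor $[\W \to \X] \squigto \ICoh_0(\X^\wedge_\W)$ and its monadic description via $\UQ(\Tang_{\W/-})$, which should be immediate from the basic functoriality developed earlier in this section.
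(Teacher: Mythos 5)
There is a genuine gap at the Beck--Chevalley step. Your assertion that the left Beck--Chevalley condition ``holds automatically, as inductions along maps of monads commute with the forgetful functors to $\QCoh(\W)$'' is not a valid justification: induction along an algebra map does not commute with the forgetful functor to $\QCoh(\W)$, and, more to the point, for a commutative square of algebra maps the square of module categories (restrictions in one direction, inductions in the other) is adjointable only when the square of algebras is a pushout. In the present situation the relevant squares involve the monads $\UQ(\Tang_{\W/\Y^{\times_\Z(\bullet+1)}})$, so the condition you need amounts to identifications such as $\UQ(\Tang_{\W/\Y \times_\Z \Y}) \simeq \UQ(\Tang_{\W/\Y}) \usotimes{\UQ(\Tang_{\W/\Z})} \UQ(\Tang_{\W/\Y})$ and their higher analogues. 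This is a nontrivial PBW-type statement --- essentially the entire content of the proposition --- and in the unbounded setting you cannot import it from base-change for $\ICoh$ on formal completions the way Corollary~\ref{cor:descent for ?-pullback in second variable} does via (\ref{adj:easy-adjunction-nil-iso-ev-coc-case}). So the proposal defers the heart of the argument to a step labelled ``automatic.''

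The paper's proof is structured precisely to avoid verifying Beck--Chevalley. Since every coface admits a left adjoint, the totalization is identified \emph{unconditionally} with the colimit of the corresponding diagram of left adjoints (the $(*,0)$-pushforwards, i.e.\ inductions); that colimit of module categories is modules over $\uscolim{[n] \in \bDelta^\op} \UQ(\Tang_{\W/\Y^{\times_\Z(n+1)}})$, and the whole problem reduces to showing that this colimit of filtered monads maps isomorphically to $\UQ(\Tang_{\W/\Z})$. That is then checked on the associated graded of the PBW filtration, where (since $\Sym^j$ commutes with sifted colimits) it reduces to the contractibility of the augmented simplicial object $(\TangQ_{\Y/\Z})^{\oplus(\bullet+1)} \to 0$, which is split. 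If you want to salvage your monadic-descent route, you would have to prove the pushout property of the monads directly, and the only available mechanism is again the associated-graded/splitting computation --- so you should carry it out explicitly rather than declare it formal.
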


\begin{proof}
Let $\W \to \Y \to \Z$ be a map in $(\Stkperflfp)_{\W/}$.
Consider the Cech complex $(\Y^{\times_\Z(\bullet+1)})^\wedge_\W$ of the map $\Y^\wedge_\W \to \Z^\wedge_\W$ and the resulting pullback functor
$$
\ICoh_0(\Z^\wedge_\W)
\longto 
\Tot \Bigt{ \ICoh_0
\bigt{ (\Y^{\times_\Z(\bullet+1)})^\wedge_\W 
}
}.
$$
We need to show that such functor is an equivalence. By passing to left adjoints, this amounts to showing that the arrow
$$
\uscolim{[n] \in \bDelta} \Bigt{ \ICoh_0
\bigt{ (\Y^{\times_\Z(\bullet+1)})^\wedge_\W 
}
}
\longto
\ICoh_0(\Z^\wedge_\W)
$$
is an equivalence, where now the structure maps forming the colimit are given by the $(*,0)$-pushforward functors (that is, induction along the maps between the universal envelopes).
Hence, it suffices to show that the natural arrow
$$
\uscolim{[n] \in \bDelta} \,
\UQ(\Tang_{\W/ {\Y^{\times_\Z(n+1)}}   }) 
\longto
\UQ(\Tang_{\W/ {\Z}}  ),
$$
taking place in $\Alg(\End(\QCoh(\W))^{\Fil, \geq 0})$, is an isomorphism. Forgetting the monad structure is conservative, whence we will just prove that the arrow above is an isomorphism in $\End(\QCoh(\W))^{\Fil, \geq 0}$ (i.e., that it is an isomorphism of filtered endofunctors).

\medskip

Since the filtrations in questions are \emph{non-negative}, it is enough to prove the isomorphism separately for each component of the associated graded. 
Recall that the $j^{th}$-associated graded of $\UQ(\fL)$ is the functor $\Sym^j(\Phi_\W(\fL)) \otimes -: \QCoh(\W) \to \QCoh(\W)$.
Thus, we are to prove that the natural map
$$
\uscolim{[n] \in \bDelta} \,
\Sym^j(\TangQ_{\W/ {\Y^{\times_\Z(n+1)}}   }) 
\longto
\Sym^j (\TangQ_{\W/Z})
$$
is an isomorphism in $\QCoh(\W)$ for each $j \geq 0$. Since $\Sym$ commutes with colimits, it suffices to show that 
$$
\uscolim{[n] \in \bDelta} \,
\TangQ_{\W/ {\Y^{\times_\Z(n+1)}}   }
\longto
\TangQ_{\W/\Z}
$$
is an isomorphism. We will show that the cone of such map is zero. First, with no loss of generality, we may assume that $\W = \Y$. Then we compute the cone in question as
$$
\uscolim{[n] \in \bDelta^\op} \,  (\TangQ_{\Y/\Z})^{\oplus(n+1)},
$$
and this expression is manifestly isomorphic to the zero object of $\QCoh(\Y)$: indeed, the simplicial object in question is the Cech nerve of the map $\TangQ_{\Y/\Z} \to 0$ in $\QCoh(\Y)$.
\end{proof}

\ssec{Tensor products of $\ICoh_0$ over $\QCoh$}

In this section, we show that formation of $\ICoh_0$ behaves well with respect to fiber products.

\begin{lem} \label{lem: UQCoh compatible with pullbacks}
Let $\X \to \Z \leftto \Y$ be a diagram in $\Stkperflfp$ and denote by $p: \X \times_\Z \Y \to \Y$ the natural map. There is a natural isomorphism
\begin{equation} \label{eqn:pullback-of-UQCoh}
\UQ(\Tang_{\X \times_\Z \Y/\X}) \circ p^*
\longto
p^* \circ \UQ(\Tang_{\Y/\Z})
\end{equation}
in the DG category $\Fun(\QCoh(\Y),\QCoh(\X \times_\Z \Y))^{\Fil, \geq 0}$.
\end{lem}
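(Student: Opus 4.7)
The plan is to construct the natural transformation using base change of cotangent complexes and then verify that it is an isomorphism by passing to the associated graded of the PBW filtration. Since both $\UQ(\Tang_{\X \times_\Z \Y/\X}) \circ p^*$ and $p^* \circ \UQ(\Tang_{\Y/\Z})$ are non-negatively filtered objects in $\Fun(\QCoh(\Y), \QCoh(\X \times_\Z \Y))^{\Fil, \geq 0}$, and both are continuous in each filtered piece, it suffices to compare the two sides at the level of non-cocomplete filtered functors after restriction to $\Perf(\Y)$, and then ind-extend.

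To construct the comparison map, I would use the base-change isomorphism $p^* \TangQ_{\Y/\Z} \simeq \TangQ_{\X \times_\Z \Y/\X}$ (which gives, at the Lie algebroid level, the isomorphism $p^! \Tang_{\Y/\Z} \simeq \Tang_{\X \times_\Z \Y/\X}$ as Lie algebroids over $\X \times_\Z \Y$), and combine it with the functoriality of the universal envelope of Lie algebroids from \cite[Chapter IV.5]{Book}. This produces a filtered natural transformation of monads $\U(\Tang_{\X \times_\Z \Y/\X}) \circ p^! \to p^! \circ \U(\Tang_{\Y/\Z})$ in $\End(\ICoh(\X \times_\Z \Y))^{\Fil, \geq 0}$. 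To descend from $\ICoh$ to $\QCoh$, I would use the identity $p^! \circ \Upsilon_\Y \simeq \Upsilon_{\X \times_\Z \Y} \circ p^*$ (obtained from the projection formula and $p^!\omega_\Y \simeq \omega_{\X \times_\Z \Y}$), combined with the fact that on $\Perf$ the renormalized monad $\UQ(\fL)^{\leq n}$ agrees with $\Phi \circ \U^{\leq n}(\fL) \circ \Upsilon$. Taking $\Phi_{\X \times_\Z \Y}$ of the resulting map and ind-extending yields the desired filtered natural transformation \eqref{eqn:pullback-of-UQCoh}.

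To check that the map is an isomorphism, I would pass to the associated graded. Since both filtrations are non-negative, an associated graded isomorphism implies a filtered isomorphism by induction on filtered degree. As observed in the proof of Proposition \ref{prop:descent for ICOHzero-general-case}, the $j$-th graded piece of $\UQ(\fL)$ is tensoring with $\Sym^j(\Phi_\W(\fL))$. Thus the $j$-th graded of the LHS is $\Sym^j(\TangQ_{\X \times_\Z \Y/\X}) \otimes p^*(-)$, while the $j$-th graded of the RHS is $\Sym^j(p^*\TangQ_{\Y/\Z}) \otimes p^*(-)$ (using that $p^*$ is symmetric monoidal and commutes with $\Sym^j$). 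The two are canonically identified via the base-change isomorphism of Step 1.

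The main obstacle is constructing the comparison map coherently as a morphism of filtered functors, rather than merely exhibiting an equivalence on each associated graded piece separately. This requires invoking the functoriality of the universal enveloping algebroid construction with respect to pullback along the cartesian square, as developed in \cite[Chapter IV.5]{Book}, and carefully tracking the compatibility through the renormalization procedure $(\;)^\ren$ used to define $\UQ$. Once the filtered map is in hand, the isomorphism check is routine.
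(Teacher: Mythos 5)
Your reduction steps coincide with the paper's: restrict to $\Perf(\Y)$ using continuity and perfection of $\Y$, rewrite $\UQ(-)^{\leq n}$ there as $\Phi \circ \U(-)^{\leq n} \circ \Upsilon$, and use $p^! \circ \Upsilon_\Y \simeq \Upsilon_{\X \times_\Z \Y} \circ p^*$ to reduce everything to a compatible $\NN$-family of isomorphisms between $p^! \circ \U(\Tang_{\Y/\Z})^{\leq n}$ and $\U(\Tang_{\X \times_\Z \Y/\X})^{\leq n} \circ p^!$ at the level of $\ICoh$. Where you diverge is in how this family is produced. You construct a filtered map by appealing to base-change functoriality of the universal envelope and then verify it is an isomorphism on the associated graded (which, for non-negative filtrations, does suffice by induction on the filtration degree). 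The paper instead obtains the isomorphisms directly at each finite level $n$: it invokes the description of $\U(\fL)^{\leq n}$ as push-pull along the $n$-th infinitesimal neighbourhood $\V^{(n)}$ of the formal groupoid attached to $\fL$ (citing \cite[Chapter IV.5, Section 5]{Book}), observes that the infinitesimal neighbourhoods for $\Tang_{\X \times_\Z \Y/\X}$ are canonically the base changes $(\X \times_\Z \Y) \times_{\Y, p_s} \V^{(n)} \simeq \V^{(n)} \times_{p_t, \Y} (\X \times_\Z \Y)$, compatibly in $n$, and concludes by base change for ind-coherent sheaves. This sidesteps what you flag as \emph{the main obstacle}: the coherent construction of the filtered comparison map is exactly what the groupoid-level identification delivers for free, and no separate associated-graded check is then needed. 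Your route works provided the filtered base-change compatibility of $\U$ you cite is genuinely available in that form; the paper's infinitesimal-neighbourhood argument is, in effect, the proof of that compatibility, so you should either reproduce it or point to a precise statement rather than leaving it as a black box.
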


\begin{proof}
We need to exhibit a compatible $\NN$-family of isomorphisms
\begin{equation} \nonumber
\UQ(\Tang_{\X \times_\Z \Y/\X})^{\leq n } \circ p^*
\longto
p^* \circ \UQ(\Tang_{\Y/\Z})^{\leq n}.
\end{equation}
By the continuity of these functors and perfection of $\Y$, it suffices to exhibit a compatible $\NN$-family of isomorphisms
\begin{equation} \nonumber
\restr{ \UQ(\Tang_{\X \times_\Z \Y/\X})^{\leq n } \circ p^*}{\Perf(\Y)}
\longto
p^* \circ \restr{\UQ(\Tang_{\Y/\Z})^{\leq n}}{\Perf(\Y)}.
\end{equation}
When restricted to $\Perf(\Y) \subset \QCoh(\Y)$, the LHS can be rewritten as 
$$
\Phi_{\X \times_\Z \Y}
\circ
\U(\Tang_{\X \times_\Z \Y/\X})^{\leq n}
\circ
p^!
\circ
\Upsilon_\Y
$$
and the RHS as
$$
\Phi_{\X \times_\Z \Y}
\circ
p^!
\circ
\U(\Tang_{\Y/\Z})^{\leq n}
\circ
\Upsilon_\Y.
$$
It then suffices to give a compatible $\NN$-family of isomorphisms
\begin{equation} \label{eqn:family-functors-truncated-univ-env}
p^! \circ 
\U(\Tang_{\Y/\Z})^{\leq n}
\longto
\U(\Tang_{\X \times_\Z \Y/\X})^{\leq n} \circ p^!
\end{equation}
of functors $\ICoh(\Y) \to \ICoh(\X \times_\Z \Y)$.

\medskip

By (\cite[Volume 2, Chapter 9, Section 6.5]{Book}), for a Lie algebroid $\L$ in $\ICoh(\Y)$, the functor $\U(\fL)^{\leq n}$ can be written using the $n^{th}$ infinitesimal neighbourhood of the formal groupoid associated to $\fL$.
In our case, let $\wt\V^{(n)}$ the $n^{th}$ infinitesimal neighbourhood attached to $\Tang_{\X \times_\Z \Y/\X} \to \Tang_{\X \times_\Z \Y}$, equipped with its two structure maps $\wt p_s, \wt p_t: \wt\V^{(n)} \rr \X \times_\Z \Y$; then
$$
\U(\Tang_{\X \times_\Z \Y/\X})^{\leq n}  
\simeq
(\wt p_s)_*^\ICoh \circ (\wt p_t)^!.
$$
Similarly, let $p_s,  p_t: \V^{(n)} \rr \Y$ be the same data for $\Tang_{\Y/\Z}m\to \Tang
_\Y$, so that
$$
\U(\Tang_{\Y/\Z})^{\leq n}  
\simeq
( p_s)_*^\ICoh \circ (p_t)^!.
$$
By the very construction of $n^{th}$ infinitesimal neighbourhoods, we have canonical isomorphisms
$$ 
\wt\V^{(n)} 
\simeq
(\X \times_\Z \Y) \times_{\Y, p_s} \V^{(n)}
\simeq
\V^{(n)} \times_{p_t, \Y} (\Y \times_\Z \X),
$$
which are compatible with varying $n$. 
Hence, the compatible isomorphisms (\ref{eqn:family-functors-truncated-univ-env}) come from base-change for ind-coherent sheaves.
\end{proof}

\begin{cor} \label{cor:commutation of UQCOH with push}
With the notation of the above lemma, assume furthermore that at least one of the following two requirements is satisfied:
\begin{itemize}
\item
the map $\X \to \Z$ is affine (more generally, we just need that $p_*: \QCoh(\X \times_\Z \Y) \to \QCoh(\Y)$ be right t-exact up to a finite shift);
\item
$\Y$ is bounded.
\end{itemize}
Then the arrow
$$
\UQ(\Tang_{\Y/\Z})  \circ p_* 
\longto
p_*  \circ \UQ(\Tang_{\X \times_\Z \Y/\X}),
$$
obtained from (\ref{eqn:pullback-of-UQCoh}) by adjunction, is an isomorphism of filtered functors from $\QCoh(\X \times_\Z \Y)$ to $\QCoh(\Y)$.
\end{cor}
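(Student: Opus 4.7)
The plan is to take the morphism adjoint to the filtered iso produced by Lemma~\ref{lem: UQCoh compatible with pullbacks} and to verify that it is an equivalence by passing to the associated graded. Concretely, applying the $(p^*, p_*)$ adjunction to the isomorphism $\UQ(\Tang_{\X \times_\Z \Y/\X}) \circ p^* \xto{\sim} p^* \circ \UQ(\Tang_{\Y/\Z})$ produces in a standard way a canonical morphism of non-negatively filtered functors
$$
\UQ(\Tang_{\Y/\Z}) \circ p_* \longto p_* \circ \UQ(\Tang_{\X \times_\Z \Y/\X}).
$$
Since both filtrations start in degree zero, it suffices to show (i) that each truncation $\UQ(\Tang_{\Y/\Z})^{\leq n} \circ p_* \to p_* \circ \UQ(\Tang_{\X \times_\Z \Y/\X})^{\leq n}$ is an equivalence for every $n \geq 0$, and (ii) that $p_*$ commutes with the filtered colimit $\colim_n \UQ(\Tang_{\X \times_\Z \Y/\X})^{\leq n}$.

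For (i), I will argue by induction along the filtration, reducing to the corresponding statement on each $j$-th associated graded piece. Using the base-change identification $\TangQ_{\X \times_\Z \Y/\X} \simeq p^* \TangQ_{\Y/\Z}$ for the relative cotangent complex of a Cartesian square, together with the formulas
$$
\gr^j \UQ(\Tang_{\Y/\Z}) \simeq \Sym^j(\TangQ_{\Y/\Z}) \otimes -,
\qquad
\gr^j \UQ(\Tang_{\X \times_\Z \Y/\X}) \simeq p^* \Sym^j(\TangQ_{\Y/\Z}) \otimes -,
$$
the map on the $j$-th graded becomes the projection-formula morphism
$$
\Sym^j(\TangQ_{\Y/\Z}) \otimes p_*(-) \longto p_* \bigt{ p^* \Sym^j(\TangQ_{\Y/\Z}) \otimes - }.
$$
Since $\TangQ_{\Y/\Z}$ is perfect by the $\lfp$ hypothesis, each $\Sym^j(\TangQ_{\Y/\Z})$ is perfect, and in particular dualizable in $\QCoh(\Y)$; the projection formula then supplies the required isomorphism.

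The hard part is step (ii): verifying that $p_*$ is continuous enough to commute with the colimit $\colim_n \UQ^{\leq n}$. This is exactly where one of the two alternative hypotheses of the corollary enters the argument. Under the first alternative, $p_*$ is right $t$-exact up to a finite shift, which guarantees that it preserves all the filtered colimits produced by the filtration (even when their colimit is unbounded in the $t$-structure). Under the second alternative, $\Y$ bounded, I would instead reduce---perhaps after pulling back along a smooth atlas---to a base-change assertion already available within the bounded theory of Section~3, where the analogous commutation follows from the functoriality encoded in Theorem~\ref{thm:functoriality of ICOHzero} applied to the morphism $[\X \times_\Z \Y \to \X] \to [\Y \to \Z]$ in $\Arr$. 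Once $p_*$ is known to commute with this colimit, (i) and (ii) combine to give the claimed filtered isomorphism.
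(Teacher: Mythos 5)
Your overall strategy (reduce to the associated graded of the non-negative filtration, identify the $j$-th graded map via $\TangQ_{\X \times_\Z \Y/\X} \simeq p^*\TangQ_{\Y/\Z}$, and conclude by the projection formula for the dualizable object $\Sym^j(\TangQ_{\Y/\Z})$) is the same as the paper's. But there is a genuine problem with where you locate the role of the two alternative hypotheses. Your step (ii) is a non-issue: for the stacks considered here (quasi-compact, affine diagonal) the functor $p_*$ is continuous, so it commutes with the filtered colimit $\colim_n \UQ^{\leq n}$ automatically; continuity has nothing to do with right $t$-exactness up to a shift, and neither hypothesis is needed for it. As a result, your argument as written proves the statement with no hypothesis at all, which should have been a warning sign.

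Where the paper actually uses the hypotheses is in your step (i), which you pass over too quickly. Evaluating on a perfect generator $P \in \Perf(\X \times_\Z \Y)$, the source of the $n$-th graded map is not simply $\Sym^n(\TangQ_{\Y/\Z}) \otimes p_*P$ but rather
$$
\Phi_\Y \Upsilon_\Y \bigt{ \Sym^n(\TangQ_{\Y/\Z}) \otimes p_*P },
$$
i.e.\ the convergent renormalization of that object, because $\UQ(\Tang_{\Y/\Z})$ is only prescribed by the formula $\Phi_\Y \circ \U^{\leq n}(\Tang_{\Y/\Z}) \circ \Upsilon_\Y$ on $\Perf(\Y)$, and $p_*P$ need not be perfect, nor even bounded above, when $\X \times_\Z \Y$ is unbounded. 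The hypotheses are exactly what make $\Phi_\Y\Upsilon_\Y$ act as the identity here: under the first alternative, $p_*P$ (hence $\Sym^n(\TangQ_{\Y/\Z}) \otimes p_*P$) is bounded above and $\Upsilon_\Y$ is fully faithful on $\QCoh(\Y)^-$; under the second, $\Y$ is bounded so $\Upsilon_\Y$ is fully faithful outright. Only after this step does the projection formula finish the argument, as you say. Your fallback for the second alternative --- reducing to Theorem~\ref{thm:functoriality of ICOHzero} --- also does not apply as stated, since that theorem requires the relevant map to be schematic and bounded and the source stack $\X \times_\Z \Y$ to be bounded, none of which is assumed here; the paper instead just notes that the bounded-$\Y$ case of the same graded-piece computation is easier.
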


\begin{proof}
As the arrow in question is the colimit of the $\NN$-family
$$
\UQ(\Tang_{\Y/\Z})^{\leq n} \circ p_* 
\longto
p_*  \circ \UQ(\Tang_{\X \times_\Z \Y/\X})^{\leq n},
$$
it suffices to prove the assertion separately for each piece of the associated graded.
For each $n\geq 0$, the map in question is
$$
\Phi_\Y \Bigt{  \Sym^n(\Tang_{\Y/\Z}) \overset ! \otimes \Upsilon_\Y ( p_*(-)) } 
\longto
p_* \Phi_{\X \times_\Z \Y} \Bigt{ \Sym^n(\Tang_{\X \times_\Z \Y/\X}) \overset ! \otimes \Upsilon_{\X \times_\Z \Y}  (-)}.
$$
Let us now finish the proof in the situation of the first assumption, the argument for the second one is easier.
It suffices to check the isomorphism after restricting both sides to $\Perf(\X \times_\Z \Y)$, in which case we are dealing with the arrow
$$
\restr{\Phi_\Y \Upsilon_\Y \Bigt{  \Sym^n(\TangQ_{\Y/\Z}) \otimes  p_*(-) } }{\Perf(\X \times_\Z \Y)}
\longto
\restr{p_* \Bigt{ \Sym^n(\TangQ_{\X \times_\Z \Y/\X}) \otimes  - }
}{\Perf(\X \times_\Z \Y)}.
$$
Now the assertion follows from the projection formula and the fact (see Section \ref{sssec: Ups fully fiath on perf}) that $\Upsilon_\Y$ is fully faithful on the full subcategory of $\QCoh(\Y)$ consisting of eventually connective objects.
\end{proof}

\begin{cor} \label{cor:commuting monads.}
In situation of Corollary \ref{cor:commutation of UQCOH with push}, the two monads $\UQ(\Tang_{\Y/ \Z})$ and $p_* p^*$ commute.
\end{cor}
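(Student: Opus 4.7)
The approach is to simply compose the two natural isomorphisms already proved in Lemma \ref{lem: UQCoh compatible with pullbacks} and Corollary \ref{cor:commutation of UQCOH with push}. Namely, the chain
$$ \UQ(\Tang_{\Y/\Z}) \circ p_* \circ p^* \xto{\simeq} p_* \circ \UQ(\Tang_{\X \times_\Z \Y/\X}) \circ p^* \xto{\simeq} p_* \circ p^* \circ \UQ(\Tang_{\Y/\Z}) $$
produces a filtered natural isomorphism of endofunctors of $\QCoh(\Y)$, giving commutativity at the level of the underlying functors. The hypotheses of Corollary \ref{cor:commutation of UQCOH with push} are automatically inherited from the statement.

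To promote this to an honest distributive law of monads, one must verify compatibility with the units and multiplications. The unit compatibility is immediate from the fact that both constituent isomorphisms are filtered, and that the units of $\UQ$ and of $p_*p^*$ are visible as the $\leq 0$ piece of the respective filtrations, where the isomorphisms are tautological. The multiplication compatibility for $p_*p^*$ is base-change for $\QCoh$ along the cartesian square defining $p$, which is formal. The multiplication compatibility for $\UQ(\Tang_{\Y/\Z})$ is checked one filtered level at a time, by going back to the proof of Lemma \ref{lem: UQCoh compatible with pullbacks}: there, the isomorphism at level $n$ was induced by $\ICoh$ base-change along the cartesian square
$$ \wt\V^{(n)} \simeq (\X \times_\Z \Y) \ustimes{\Y} \V^{(n)}, $$
and one must observe that these identifications, as $n$ varies, are compatible with the composition maps $\V^{(n)} \ustimes{\Y} \V^{(m)} \to \V^{(n+m)}$ that encode the multiplication on the universal envelope.

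The main obstacle is exactly this last compatibility: tracking homotopy coherences when composing infinitesimal thickenings. The cleanest way to dispatch it is to phrase the whole construction at the functorial level from Lie algebroids to filtered monads on $\QCoh$. Concretely, the pullback $p^!$ of Lie algebroids sends $\Tang_{\Y/\Z} \squigto \Tang_{\X \times_\Z \Y/\X}$ as algebras over the Lie operad in $\ICoh(\X \times_\Z \Y)$, and both of the isomorphisms used above are just expressions of the naturality of the universal envelope functor $\U$ of \cite[Chapter IV.4]{Book} combined with its $\QCoh$-renormalization (\ref{funct: !-perfect Lie algebroids to monads wihth filtration on QCOH}). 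Once packaged at the Lie algebroid level, the distributive law emerges from functoriality with no further coherence checks, and the corollary follows.
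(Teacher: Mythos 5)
Your proposal is correct and takes essentially the same route as the paper: the paper states this corollary with no proof at all, treating it as the immediate composite of the isomorphisms of Lemma \ref{lem: UQCoh compatible with pullbacks} and Corollary \ref{cor:commutation of UQCOH with push}, which is precisely your first paragraph. Your further discussion of unit/multiplication compatibility and of packaging the argument at the level of Lie algebroids supplies coherence details that the paper leaves implicit (and which are genuinely needed, since the corollary is later used to put a monad structure on the composite), but it does not constitute a different approach.
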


\sssec{}

We now generalize the simplest instance of Proposition \ref{prop:exterior-product-ICohzero-over-QCoh-ev-coc-case}.

\begin{prop} \label{prop:exterior-product-ICohzero-over-QCoh}
Let $\Y \to \Z \leftto \V \leftto \U$ be a diagram in $\Stkperfevcoclfp$. Note that we do not assume that $\Y \times_\Z \U$ be bounded. Then the exterior product yields an equivalence
\begin{equation} 
\QCoh(\Y)
\usotimes{\QCoh(\Z)}
\ICoh_0(\V^\wedge_\U)
\xto{\;\;\simeq\;\; }
\ICoh_0 \Bigt{ 
(\Y \times_{\Z} \V)
^\wedge_
{(\Y \times_{\Z} \U)}
}.
\end{equation}
\end{prop}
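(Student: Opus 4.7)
The approach is to exhibit both sides as DG categories of modules for a canonically isomorphic monad acting on $\QCoh(\Y \times_\Z \U)$. Since $\Stkperf$ is closed under fiber products, we have the equivalence $\QCoh(\Y) \otimes_{\QCoh(\Z)} \QCoh(\U) \simeq \QCoh(\Y \times_\Z \U)$ à la Ben-Zvi--Francis--Nadler. The monad $\UQ(\Tang_{\U/\V})$ on $\QCoh(\U)$ that defines $\ICoh_0(\V^\wedge_\U)$ is $\QCoh(\V)$-linear (this is manifest from the construction via $\Phi_\U$, $\Upsilon_\U$ and the PBW pieces $\U^{\leq n}(\Tang_{\U/\V})$, which are $\ICoh(\V)$-linear), and hence in particular $\QCoh(\Z)$-linear. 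The general principle that base change of module categories along $\QCoh(\Z) \to \QCoh(\Y)$ commutes with module categories for $\QCoh(\Z)$-linear monads then identifies the LHS with $T\mod(\QCoh(\Y \times_\Z \U))$, where $T$ is the monad obtained by pulling $\UQ(\Tang_{\U/\V})$ back along $p: \Y \times_\Z \U \to \U$.

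Second, I would apply Lemma~\ref{lem: UQCoh compatible with pullbacks} with the roles $\X \rightsquigarrow \Y \times_\Z \V$, $\Z \rightsquigarrow \V$, $\Y \rightsquigarrow \U$, so that $(\Y \times_\Z \V) \times_\V \U \simeq \Y \times_\Z \U$ and the resulting projection is exactly our $p$. The lemma then yields a filtered isomorphism
\[
\UQ\bigl(\Tang_{(\Y \times_\Z \U)/(\Y \times_\Z \V)}\bigr) \circ p^*
\;\simeq\;
p^* \circ \UQ(\Tang_{\U/\V}),
\]
which exhibits the pulled-back monad $T$ as $\UQ(\Tang_{(\Y \times_\Z \U)/(\Y \times_\Z \V)})$. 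This is precisely the monad whose modules define $\ICoh_0\bigl((\Y \times_\Z \V)^\wedge_{\Y \times_\Z \U}\bigr)$. The exterior tensor product functor is the one obtained by combining the equivalence of underlying ambient categories with the monad identification, so it is automatically fully faithful and essentially surjective, yielding the claimed equivalence.

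The principal technical obstacle is that Lemma~\ref{lem: UQCoh compatible with pullbacks} only provides an isomorphism of \emph{filtered endofunctors}, whereas identifying modules requires an isomorphism of \emph{monads with filtration}. I would address this by tracing through the construction of $\UQ$ from the formula $\U(\fL)^{\leq n} \simeq (\wt p_s)^{\ICoh}_* \circ (\wt p_t)^!$ used in the proof of the lemma: the compatible isomorphisms $\wt\V^{(n)} \simeq (\Y \times_\Z \U) \times_\U \V^{(n)}$ of the infinitesimal neighbourhoods are compatible with the simplicial structure that encodes the monad multiplication, so $!$-base-change upgrades the filtered isomorphism to one in $\Alg(\End(\QCoh(\Y \times_\Z \U))^{\Fil, \geq 0})$. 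With that upgrade in hand, the standard base-change of monadic adjunctions closes out the argument; no boundedness of $\Y \times_\Z \U$ is needed, since $\ICoh_0$ has been defined for arbitrary perfect lfp stacks in \eqref{defn:ICOHzero-unbounded case}.
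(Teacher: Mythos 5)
Your argument is correct and rests on the same key input as the paper's proof---Lemma~\ref{lem: UQCoh compatible with pullbacks} applied to the base change of $\U \to \V$ along $\Y \times_\Z \V \to \V$---but the bookkeeping is genuinely different. The paper first reduces to the case $\V = \Z$ and then pushes everything down to $\QCoh(\U)$: it realizes both sides as modules for the monads $p_* \circ p^* \circ \UQ(\Tang_{\U/\V})$ and $p_* \circ \UQ(\Tang_{(\Y\times_\Z\U)/(\Y\times_\Z\V)}) \circ p^*$ acting on $\QCoh(\U)$, which requires monadicity of the adjunction $p^* \dashv p_*$ (the paper appeals to an affineness hypothesis that does not appear in the statement, or alternatively to boundedness of $\U$ via Corollary~\ref{cor:commutation of UQCOH with push}) together with the commutation of the two monads from Corollary~\ref{cor:commuting monads.}. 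You instead stay upstairs on $\QCoh(\Y\times_\Z\U)$ and invoke base change of module categories for a $\QCoh(\Z)$-linear monad along $\QCoh(\Z) \to \QCoh(\Y)$; the price is that this \emph{general principle} needs the dualizability of $\QCoh(\Y)$ as a $\QCoh(\Z)$-module (which holds for perfect stacks by \cite{BFN}) and, as you correctly flag, an upgrade of the lemma's isomorphism of filtered endofunctors to one of filtered monads---a point the paper glosses over entirely, so your sketch via the infinitesimal groupoids is a genuine improvement in care. Your route has the advantage of bypassing the affineness/monadicity issue and of handling general $\V \neq \Z$ without the preliminary reduction. One step you should make explicit: the lemma only pins down your candidate monad on the subcategory $p^*\Perf(\U)$, so to conclude that $\UQ(\Tang_{(\Y\times_\Z\U)/(\Y\times_\Z\V)})$ agrees with the base-changed monad $T$ on all of $\QCoh(\Y\times_\Z\U)$ you need its $\QCoh(\Y\times_\Z\V)$-linearity (clear, since it is the envelope of a Lie algebroid relative to $\Y\times_\Z\V$) together with the fact that objects $q^*F \otimes p^*G$ with $F \in \Perf(\Y)$, $G \in \Perf(\U)$ generate. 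With that point addressed, your proof is complete and at the same level of rigor as the paper's.
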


\begin{proof}
Without loss of generality, we may assume that $\Z= \V$. Thus, for a diagram $\Y \to \Z \leftto \U$, we need to construct a $\QCoh(\Y)$-linear equivalence
 \begin{equation} \label{eqn:exterior-product-ICohzero-over-QCoh-unbounded}
\QCoh(\Y)
\usotimes{\QCoh(\Z)}
\ICoh_0(\Z^\wedge_\U)
\xto{\;\;\simeq\;\; }
\ICoh_0 \Bigt{ 
\Y
^\wedge_
{(\Y \times_{\Z} \U)}
}.
\end{equation}

Both categories are modules for monads acting on $\QCoh(\U)$ (this is true thanks to the hypothesis of affineness), so it suffices to construct a map between those monads and check it is an isomorphism.

\medskip

Let $p: \U \times_\Z \Y \to \U$ denote the obvious projection.
The two monads in questions are 
$$
p_* \circ p^*  \circ \UQ(\Tang_{\U/\V})
$$
and
$$
 p_* \circ \UQCoh(\Tang_{\U \times_\Z \X/ \V \times_\Z \Y}) \circ p^*
$$
Note that the monad structure on the former functor has been discussed in Corollary \ref{cor:commuting monads.}.

\medskip

By assumption,  $\QCoh(\U \times_\Z \Y)$ is compactly generated by objects of the form $p^*P$ for $P \in \Perf(\U)$.
Now the assertion follows from Lemma \ref{lem: UQCoh compatible with pullbacks}.
\end{proof}

\ssec{The exceptional pull-back and push-forward functors}

Let us now generalize Section \ref{sssec:exceptional pull and push in bounded case}. 

\sssec{}

Given maps $\X \to \Z \leftto \Y$ in $\Stkperflfp$, we regard the resulting cartesian diagram
\begin{equation} 
\nonumber
\begin{tikzpicture}[scale=1.5]
\node (00) at (0,0) {$\X$};
\node (10) at (1.5,0) {$\Z$,};
\node (01) at (0,.8) {$\X \times_\Z \Y $};
\node (11) at (1.5,.8) {$\Y$};
\path[->,font=\scriptsize,>=angle 90]
(00.east) edge node[above] {$g$} (10.west);
\path[->,font=\scriptsize,>=angle 90]
(01.east) edge node[above] {$G$} (11.west);
\path[->,font=\scriptsize,>=angle 90]
(01.south) edge node[right] {$F$} (00.north);
\path[->,font=\scriptsize,>=angle 90]
(11.south) edge node[right] {$f$} (10.north);
\end{tikzpicture}
\end{equation}
as a morphism 
$$
\eta:
[\X \times_\Z \Y \to \X]
\longto
[\Y \to \Z]  
$$ 
in $\Arr(\Stkperflfp)$. We emphasize that none of the stacks in question is required to be bounded.
In this situation, we define the adjunction
\begin{equation} \label{adj:exceptional-adjunction-unbounded}
\begin{tikzpicture}[scale=1.5]
\node (a) at (0,1) {$\eta^{!,0}: \ICoh_0(\Z^\wedge_\Y)   $};
\node (b) at (3,1) {$  \ICoh_0(\X^\wedge_{\X \times_\Z \Y}) : \eta_?$.};
\path[->,font=\scriptsize,>=angle 90]
([yshift= 1.5pt]a.east) edge node[above] {  } ([yshift= 1.5pt]b.west);
\path[->,font=\scriptsize,>=angle 90]
([yshift= -1.5pt]b.west) edge node[below] { } ([yshift= -1.5pt]a.east);
\end{tikzpicture}
\end{equation}
exactly as in Section \ref{sssec:exceptional pull and push in bounded case}, using the equivalence
$$
\ICoh_0(\X^\wedge_{\X \times_\Z \Y})
\simeq
\QCoh(\X)
\usotimes{\QCoh(\Z)}
\ICoh_0(\Z^\wedge_\Y)
$$
proven in Proposition \ref{prop:exterior-product-ICohzero-over-QCoh}.

\sssec{}
Tautologically, the functors $\eta^{!,0}$ and $\eta_?$ fit in the commutative diagrams
\begin{equation} 
\nonumber
\begin{tikzpicture}[scale=1.5]
\node (00) at (0,0) {$\QCoh(\Y)$};
\node (10) at (3,0) {$\ICoh_0(\Z^\wedge_\Y)$};
\node (01) at (0,1) {$\QCoh(\X \times_\Z \Y )$};
\node (11) at (3,1) {$\ICoh_0 \bigt{ \X^\wedge_{\X \times_\Z \Y} }$};
\path[<-,font=\scriptsize,>=angle 90]
(00.east) edge node[above] {$(\primef)^{?}$} (10.west);
\path[<-,font=\scriptsize,>=angle 90]
(01.east) edge node[above] {$('\! F)^{?}$} (11.west);
\path[<-,font=\scriptsize,>=angle 90]
(01.south) edge node[right] {$ G^*$} (00.north);
\path[<-,font=\scriptsize,>=angle 90]
(11.south) edge node[right] {$\eta^{!,0}$} (10.north);
\end{tikzpicture}
\end{equation}
\begin{equation} \label{diag:commutation of ?-pushforward with sharp pullbacks}
\begin{tikzpicture}[scale=1.5]
\node (00) at (0,0) {$\QCoh(\Y)$};
\node (10) at (3,0) {$\ICoh_0(\Z^\wedge_\Y)$.};
\node (01) at (0,1) {$\QCoh(\X \times_\Z \Y )$};
\node (11) at (3,1) {$\ICoh_0 \bigt{ \X^\wedge_{\X \times_\Z \Y} }$};
\path[<-,font=\scriptsize,>=angle 90]
(00.east) edge node[above] {$(\primef)^{?}$} (10.west);
\path[<-,font=\scriptsize,>=angle 90]
(01.east) edge node[above] {$(' \!  F)^{?}$} (11.west);
\path[->,font=\scriptsize,>=angle 90]
(01.south) edge node[right] {$ G_*$} (00.north);
\path[->,font=\scriptsize,>=angle 90]
(11.south) edge node[right] {$\eta_?$} (10.north);
\end{tikzpicture}
\end{equation}

\begin{example} \label{example:loop Omega of a scheme}

Let us illustrate the adjuction $(\eta^{!,0}, \eta_?)$ in the simple example where $\X = \Y = \pt$, both mapping to a marked point of $\Z$. We further assume that $\Z =Z$ is a bounded affine scheme locally of finite presentation. In this case, $\X \times_\Z \Y = \Omega Z := \pt \times_Z \pt$ and the adjunction takes the form
\begin{equation} \label{adj:exceptional-adjunction-bounded}
\nonumber
\begin{tikzpicture}[scale=1.5]
\node (a) at (0,1) {$\eta^{!,0}: \ICoh(Z^\wedge_\pt)   $};
\node (b) at (4,1) {$  \ICoh_0(\pt^\wedge_{\Omega Z})
=: \vDmod(\Omega Z): \eta_?$.};
\path[->,font=\scriptsize,>=angle 90]
([yshift= 1.5pt]a.east) edge node[above] {  } ([yshift= 1.5pt]b.west);
\path[->,font=\scriptsize,>=angle 90]
([yshift= -1.5pt]b.west) edge node[below] { } ([yshift= -1.5pt]a.east);
\end{tikzpicture}
\end{equation}
Now, the LHS is equivalent to $\U(L) \mod$, where $L$ is the DG Lie algebra $\Tang_{Z,z}[-1]$, see \cite{DAG-X}.
On the other hand, since $\Omega Z$ is a formal group DG scheme with Lie algebra $L$, we have $\Gamma(\Omega Z, \O) \simeq \U(L)^* \simeq \Sym (L^*)$ as commutative DG algebras; the RHS is thus equivalent to 
$$
(\Sym (L^*) 	\rtimes \U(L) ) \mod.
$$
The adjunction in question becomes
$$ 
\begin{tikzpicture}[scale=1.5]
\node (a) at (0,1) {$ \U(L) \mod$};
\node (b) at (4,1) {$ (\Sym (L^*) 	\rtimes \U(L) ) \mod$,};
\path[->,font=\scriptsize,>=angle 90]
([yshift= 1.5pt]a.east) edge node[above] {$\ind_\WW$ } ([yshift= 1.5pt]b.west);
\path[->,font=\scriptsize,>=angle 90]
([yshift= -1.5pt]b.west) edge node[below] { $\oblv_\WW$} ([yshift= -1.5pt]a.east);
\end{tikzpicture}
$$
that is, the induction/restriction adjunction along the algebra map $\U(L) \to \Sym L^* \rtimes \U(L)$ defined by $\phi \squigto 1 \otimes \phi$.

\end{example}

\ssec{Base-change}

We now construct base-change isomorphisms between $(*,0)$-pushforwards and $(!,0)$-pullbacks. 

\sssec{}

As we have seen, in the unbounded context we have defined $(*,0)$-pushforwards only for morphisms of the form 
$$
[\X \to \Y]
\longto
[\X \to \Z]
$$
in $\Arr(\Stkperflfp)$, and $(!,0)$-pullbacks only for arrows of the form
$$
[\W \times_\Z \X \to \W]
\longto
[\X \to \Z].
$$
Let us call \emph{nil-isomorphisms} the arrows of the first type and \emph{cartesian} the arrows of the second type. It is straightforward to check that the associated $\infty$-category $\Corr(\Arr(\Stkperflfp))_{\niliso;\cart}$ of correspondences is well-defined.

\begin{prop}
The $(*,0)$-pushforwards and $(!,0)$-pullbacks assemble to a functor
$$
\ICoh_0:
\Corr(\Arr(\Stkperflfp))_{\niliso;\cart}
\longto
\DGCat.
$$
\end{prop}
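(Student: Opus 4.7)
The plan is to invoke the correspondence-category machinery of \cite{Book}, Chapter V.1. First I verify that $(\niliso, \cart)$ is an admissible pair of classes, so that $\Corr(\Arr(\Stkperflfp))_{\niliso;\cart}$ is well-defined. Given a nil-isomorphism $\beta: [\X \to \Y] \to [\X \to \Z]$ (arising from a string $\X \to \Y \to \Z$) and a cartesian arrow $\alpha: [\W \times_\Z \X \to \W] \to [\X \to \Z]$, a direct check shows that their fiber product in $\Arr(\Stkperflfp)$ is $[\W \times_\Z \X \to \W \times_\Z \Y]$, equipped with a nil-isomorphism $\beta'$ to $[\W \times_\Z \X \to \W]$ (coming from the pullback string $\W \times_\Z \X \to \W \times_\Z \Y \to \W$) and a cartesian arrow $\alpha'$ to $[\X \to \Y]$ (using $\W \times_\Z \X \simeq (\W \times_\Z \Y) \times_\Y \X$); perfection and lfp are preserved under such fiber products by \cite{BFN} together with the compatibility of the cotangent complex with base change.

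With admissibility in hand, the standard machinery reduces the construction to producing: (a) the push along $\niliso$-arrows (already at our disposal from (\ref{adj:monadic in general case})); (b) the pull along $\cart$-arrows (from (\ref{adj:exceptional-adjunction-unbounded})); and (c) base-change isomorphisms
\begin{equation*}
\alpha^{!,0} \circ \beta_{*,0} \; \simeq \; (\beta')_{*,0} \circ (\alpha')^{!,0}
\end{equation*}
for each cartesian square of the above form, together with their higher-simplicial coherences. The core content is (c).

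To verify (c), I pass to the monadic description $\ICoh_0(-) \simeq \UQ(\Tang_{-/-})\mmod(\QCoh(-))$. Letting $p: \W \times_\Z \X \to \X$ denote the projection, $\beta_{*,0}$ is induction along the morphism of filtered monads $\UQ(\Tang_{\X/\Y}) \to \UQ(\Tang_{\X/\Z})$ on $\QCoh(\X)$, while $\alpha^{!,0}$ is the pullback $p^*$ upgraded to a $\UQ(\Tang_{\W \times_\Z \X/\W})$-linear functor via Lemma \ref{lem: UQCoh compatible with pullbacks} and the exterior product identification of Proposition \ref{prop:exterior-product-ICohzero-over-QCoh}. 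Both composites then send a $\UQ(\Tang_{\X/\Y})$-module $M \in \QCoh(\X)$ to $p^*\bigt{\UQ(\Tang_{\X/\Z}) \otimes_{\UQ(\Tang_{\X/\Y})} M}$, and the required natural isomorphism follows by applying Lemma \ref{lem: UQCoh compatible with pullbacks} to the algebra morphism $\UQ(\Tang_{\X/\Y}) \to \UQ(\Tang_{\X/\Z})$, whose $p^*$-pullback coincides with $\UQ(\Tang_{\W \times_\Z \X/\W \times_\Z \Y}) \to \UQ(\Tang_{\W \times_\Z \X/\W})$ by the naturality statement implicit in (\ref{funct: !-perfect Lie algebroids to monads wihth filtration on QCOH}).

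The hard part will be assembling the higher-simplicial coherences for base-change across cubes of cartesian squares. I expect to handle this by repackaging everything as a section of the Cartesian fibration over $\Arr(\Stkperflfp)$ whose fiber over $[\X \to \Z]$ is $\Alg(\End(\QCoh(\X))^{\Fil, \geq 0})$, the section being built from the functoriality of $\UQ$ in its Lie-algebroid argument as established in (\ref{funct: !-perfect Lie algebroids to monads wihth filtration on QCOH}). In this framework, every higher coherence for base-change becomes a consequence of the naturality of $\UQ$ together with the familiar base-change for $*$-pullbacks of quasi-coherent sheaves.
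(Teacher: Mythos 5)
Your argument is correct and essentially the paper's own: the paper forms the same fiber-product correspondence $[\X \to \Y] \leftto [\W \times_\Z \X \to \W \times_\Z \Y] \to [\W \times_\Z \X \to \W]$ and deduces commutativity of the base-change square by two applications of Proposition \ref{prop:exterior-product-ICohzero-over-QCoh}, whose proof is precisely the monad comparison via Lemma \ref{lem: UQCoh compatible with pullbacks} that you carry out by hand. The only difference is packaging (you inline the proof of that proposition at the level of $\UQ(\Tang_{-/-})$-modules), and your explicit attention to the higher-simplicial coherences is, if anything, more careful than the paper, which only verifies the single square.
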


\begin{proof}
A diagram
$$
[\X \to \Y]
\xto{\; \; \xi \; \; }
[\X \to \Z]
\xleftarrow{ \; \; \eta \; \; }
[\W \times_\Z \X \to \W],
$$
with all stacks in $\Stkperflfp$, gives rise to a correspondence
$$
[\X \to \Y]
\xleftarrow{ \; \; \wt\eta \; \; }
[\W \times_\Z \X \to \W \times_\Z \Y]
\xto{\; \; \wt\xi \; \; }
[\X \times_\Z \W \to \W]
$$
and to a square
$$ 
\begin{tikzpicture}[scale=1.5]
\node (00) at (0,0) {$\ICoh_0(\Y^\wedge_\X)$};
\node (10) at (3.5,0) {$\ICoh_0(\Z^\wedge_\X)$.};
\node (11) at (3.5,1.2) {$\ICoh_0(\W^\wedge_{\W \times_\Z \X})$};
\node (01) at (0,1.2) {$\ICoh_0((\W \times_\Z \Y)^\wedge_{\W \times_\Z \X})$};
\path[->,font=\scriptsize,>=angle 90]
(00.east) edge node[above] {$\xi_{*,0}$} (10.west);
\path[->,font=\scriptsize,>=angle 90]
(01.east) edge node[above] {$\wt \xi_{*,0}$} (11.west);
\path[<-,font=\scriptsize,>=angle 90]
(01.south) edge node[right] {$ {\wt\eta}^{!,0}$} (00.north);
\path[<-,font=\scriptsize,>=angle 90]
(11.south) edge node[right] {$\eta^{!,0}$} (10.north);
\end{tikzpicture}
$$
The latter is canonically commutative: to see this, use Proposition \ref{prop:exterior-product-ICohzero-over-QCoh} to rewrite the two DG categories on the top as relative tensor products.
\end{proof}


\section{The center of $\H(\Y)$} \label{sec:center}

The goal of this final section is to compute the center of the monoidal DG category $\H(\Y)$ associated to $\Y \in \Stkperfevcoclfp$.

\ssec{The center of a monoidal DG category}

In this preliminary section, we will recall some general facts on the center of a monoidal DG category $A$. For instance, we will recall why, for $A$ rigid and pivotal, the center of $A$ (with values in a bimodule category $M$) sits in a monadic adjunction
$$ 
\begin{tikzpicture}[scale=1.5]
\node (a) at (0,1) {$M$};
\node (b) at (2,1) {$\Z_A(M)$.};
\path[->,font=\scriptsize,>=angle 90]
([yshift= 1.5pt]a.east) edge node[above] { $ \ev^L $ } ([yshift= 1.5pt]b.west);
\path[->,font=\scriptsize,>=angle 90]
([yshift= -1.5pt]b.west) edge node[below] { $ \ev$ } ([yshift= -1.5pt]a.east);
\end{tikzpicture}
$$

\sssec{}

Let $A$ be a monoidal DG category and $M$ an $(A,A)$-bimodule. Then, the \emph{center of $M$ with respect to $A$} is the DG category 
$$
\Z_A(M) := \Fun_{(A,A)\bbimod}(A,M).
$$
When $M=A$, we write $\Z(A)$ in place of $\Z_A(A)$.

\sssec{}

Denote by
$$
\ev: \Z_A(M) \longto \Fun_{A \mmod}(A, M) \simeq M
$$
the tautological (continuous and) conservative functor that forgets the right $A$-action. Thus, we see that $\Z_A(M)$ consists of elements $m \in M$ with extra structure: this extra structure captures precisely the commutation of $m$ with elements of $A$.

\sssec{} \label{sssec:formula for cosimplicial arrows}

To express $\Z_A(M)$ more explicitly, we use the bar resolution of the $(A,A)$-bimodule $A \simeq A \otimes_A A$ to obtain:
\begin{eqnarray} \label{eqn: Bar stuff for center}
\nonumber
\Z_A(M) 
& \simeq &
\Tot \, \Fun_{A \otimes A^\rev} (A^{\otimes (\bullet+2)}, M) \\
\nonumber
& \simeq &
\Tot \, \Fun(A^{\otimes \bullet}, M) \\
& = &
\lim
(
M 
\rr 
\Fun(A, M)
\rrr
\Fun(A \otimes A, M) \cdots ).
\end{eqnarray}
Unraveling the construction, for $n \geq 0$ and $i = 0, \ldots, n+1$, the structure arrow
$$
\partial_n^i: \Fun ( A^{\otimes n}, M)
\longto
\Fun ( A^{\otimes (n+1)}, M)
$$
sends $f$ to the functor
$$
\partial_n^i(f): 
(a_1, \ldots a_{n+1})
\squigto
\begin{cases} 
a_1 \star f(a_2, \ldots, a_{n+1}) , & \mbox{if } i=0 \\
f(a_1, \ldots, a_i \star a_{i+1}, \ldots,  a_{n+1}) , & \mbox{if } 1 \leq i \leq n \\
 f(a_1, \ldots, a_{n}) \star a_{n+1} , & \mbox{if } i=n+1, \\
 \end{cases}
$$
with the obvious reinterpretation in the case $n=0$.

\sssec{} \label{sssec:description of ZA for A rigid}

Assume from now on that $A$ is \emph{rigid} and \emph{pivotal} (in particular, $A$ is compactly generated), see Sections \ref{sssec:rigidity conventions} and \ref{sssec:pivotality} for the definitions. Any rigid monoidal category is self-dual is such a way that $m^\vee \simeq m^R$. Pivotality further implies that such self-duality $A^\vee \simeq A$ is an equivalence of $(A,A)$-bimodules. Using this and \cite[Proposition D.2.2]{ShvCat}, we can turn the limit of \eqref{eqn: Bar stuff for center} into a colimit by taking left adjoints. We obtain that $\Z_A(M)$ can also be computed as the colimit of the simplicial DG category
\begin{equation} \label{eqn: simplicial cat computing ZAM with A rigid}
\Z_A(M)
\simeq
\uscolim{[n] \in \bDelta^\op} \;
\Bigt{
\cdots
 A \otimes A \otimes M
\rrr
 A \otimes M 
\rr
M
},
\end{equation}
with arrows induced by multiplication, action and reversed action as usual.

\begin{rem}
If $A$ is rigid but not pivotal, then the equivalence $A^\vee \simeq A$ is $(A,A)$-bilinear provided that we twist one of the two actions on $A$ by the monoidal automorphism of $A$ induced by $a \mapsto (a^\vee)^\vee$ at the level of compact objects. We are grateful to Lin Chen for pointing out this issue, which is discussed in \cite[Section 3.2]{BN}.
\end{rem}

\begin{rem} \label{rem:trace = center}
The colimit on the RHS computes the relative tensor product $A \otimes_{A \otimes A^\rev} M$: this is called the \emph{trace} of $M$ with respect to $A$, denoted $\Tr(A,M)$.
Thus, center and \emph{trace} are canonically identified whenever $A$ is rigid and pivotal. See \cite[Section 5.1]{BFN} for more details on this general situation.
\end{rem}

\sssec{}

Thanks to \cite[Corollary D.4.9 (a)]{ShvCat}, we obtain that the cosimplicial diagram featuring in (\ref{eqn: Bar stuff for center}) satisfies the left Beck-Chevalley condition.\footnote{Alternatively, the simplicial diagram appearing in (\ref{eqn: simplicial cat computing ZAM with A rigid}) satisfies the right Beck-Chevalley condition.} 
We then use \cite[Theorem 4.7.5.2]{HA} to deduce that:
\begin{itemize}
\item
 the evaluation functor $\ev: \Z_A(M) \to M$ is monadic: it is conservative, continuous (hence the split colimit condition is obviously satisfied) and it admits a left adjoint $\ev^L$ yielding the equivalence
$$
\Z_A(M) \simeq 
\bigt{ \ev \circ \ev^L } \mod (M);
$$
\item
the functor underlying the monad $ \ev \circ \ev^L $ is isomorphic to the composition $(\partial_0^0)^L \circ (\partial_0^1) \simeq m \circ (m^\rev)^R$.
\end{itemize}

\ssec{The computation}

In this section, we let $A$ be the monoidal DG category $\H(\Y)$ associated to $\Y \in \Stkperfevcoclfp$. It is rigid and pivotal in view of Lemma \ref{lem:pivotal}.
To reduce cuttler, let us set 
$$
Q:=\QCoh(\Y),
\hspace{.4cm}
H:= \H(\Y)= \ICoh_0((\Y \times \Y)^\wedge_\Y),
\hspace{.4cm}
Z := \Z(H).
$$
To compute $Z$, we will use a variation of the equivalence (\ref{eqn: simplicial cat computing ZAM with A rigid}) which takes into account the monoidal functor $\primeDelta_{*,0}: Q \longto H$.
This will allow us to use the equivalence $\Z(Q) \simeq \QCoh(L\Y)$ established in \cite{BFN}.

\sssec{} \label{sssec:structure-maps-for-Bar-stuff}

Let $M$ be an $(H, H)$-bimodule category.
For any $m \geq 0$, we shall use the notation
$$
H^{\otimes_Q m}
:=
\underset{m \, \text{times}}{\underbrace{H \otimes_Q \cdots \otimes_Q H}},
$$
with the understanding that $H^{\otimes_Q 0} =Q$.
The bar resolution of $H$ in the symmetric monoidal $\infty$-category $Q \mmod$ allows to write $\Z_H(M)$ (or rather, $\Tr(H,M)$) as the colimit
\begin{equation} \label{eqn: simplicial cat for H}
\Z_H(M)
\simeq
\uscolim{[n] \in \bDelta^\op} \;
\Bigt{
\cdots
(H \otimes_Q H) \otimes_{Q \otimes Q^\rev} M
\rrr
H \otimes_{Q \otimes Q^\rev} M
\rr
Q \otimes_{Q \otimes Q^\rev} M
}.
\end{equation}
of the simplicial category 
$$
 H^{\otimes_Q (\bullet+2)} 
 \usotimes{{H \otimes H^\rev}} 
 M
\simeq
 H^{\otimes_Q (\bullet)} 
 \usotimes{{Q \otimes Q^\rev}} 
 M,
$$
with the obvious structure arrows.
In  the case $M=H$, we have
\begin{equation} \label{eqn:express the center using Q}
Z 
\simeq
\colim 
\Bigt{
H^{\otimes_Q \bullet}
\usotimes{Q \otimes Q^\rev} H
}.
\end{equation}
Let us proceed to express the DG catergory on the RHS in geometric terms.
\begin{prop}
There is a natural equivalence
\begin{equation} \label{eqn:compute the center geometrically}
H^{\otimes_Q \bullet}
\usotimes{Q \otimes Q^\rev} H
\longto
\ICoh_0((\Y^{\bullet +1})^\wedge_{L\Y})
\end{equation} 
of simplicial categories, where 
$$
\ICoh_0((\Y^{\bullet +1})^\wedge_{L\Y})
$$
is the simplicial category induced, under $(*,0)$-pushforwards, by the Cech resolution of $\Y \to \pt$.
\end{prop}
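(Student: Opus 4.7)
The approach is to reduce the statement to iterated applications of \propref{prop:exterior-product-ICohzero-over-QCoh} and then match the simplicial structure maps using the functoriality of $\ICoh_0$ developed in Section~\ref{sec:beyond-ev-coc}. The first step is to identify each level geometrically: viewing $H = \ICoh_0((\Y \times \Y)^\wedge_\Y)$ as a $(Q, Q)$-bimodule via the two projections $\Y \times \Y \to \Y$, an induction on $n$ using \propref{prop:exterior-product-ICohzero-over-QCoh} (with base $\Z = \Y$) yields
$$
H^{\otimes_Q n} \simeq \ICoh_0((\Y^{n+1})^\wedge_\Y),
$$
where $\Y \hto \Y^{n+1}$ is the main diagonal and the residual $(Q, Q)$-bimodule structure tracks the first and last projection $\Y^{n+1} \to \Y$. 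The inductive step rests on the elementary identities $\Y^n \ustimes{\Y} \Y^2 \simeq \Y^{n+1}$ and $\Y \ustimes{\Y} \Y \simeq \Y$.

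Next I would tensor with the outer $H$ over $Q \otimes Q^\rev \simeq \QCoh(\Y \times \Y)$. Here $H^{\otimes_Q n}$ is a $\QCoh(\Y \times \Y)$-module via $(p_1, p_{n+1}): \Y^{n+1} \to \Y \times \Y$, while $H$ itself sits over $\Y \times \Y$ via the identity. A further application of \propref{prop:exterior-product-ICohzero-over-QCoh} collapses the ambient space to $\Y^{n+1}$ (base change against the identity) and identifies the formal-completion locus with $\Y \ustimes{\Y \times \Y} \Y = L\Y$, embedded in $\Y^{n+1}$ as $L\Y \xto{\pi} \Y \xto{\Delta_{n+1}} \Y^{n+1}$. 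This produces the desired levelwise equivalence
$$
H^{\otimes_Q n} \usotimes{Q \otimes Q^\rev} H \simeq \ICoh_0((\Y^{n+1})^\wedge_{L\Y}).
$$

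The next task is to match the simplicial structure. Faces come in two flavours: the \emph{inner} ones from the convolution $m: H \otimes_Q H \to H$, and the \emph{outer} ones from the cyclic $(H, H)$-bimodule action on $H$. By \secref{sssec:official-definition of H(Y)}, convolution is itself a $(*,0)$-pushforward along $\Y^3 \to \Y^2$ (composed with a $(!,0)$-pullback along diagonals); under the level-$n$ identification, the inner faces translate into $(*,0)$-pushforwards along the coordinate-forgetting maps $\Y^{n+1} \to \Y^n$, which are exactly the face maps of the Cech object of $\Y \to \pt$ (read off over $L\Y$). The outer faces and the degeneracies are handled analogously, using the diagonal embeddings $\Y^n \hto \Y^{n+1}$.

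The main obstacle will be promoting these levelwise identifications into an actual morphism of simplicial objects in $\DGCat$, rather than an incoherent family of equivalences. To obtain the required coherence, I would invoke the base-change functoriality
$$
\ICoh_0: \Corr(\Arr(\Stkperflfp))_{\niliso;\cart} \longto \DGCat
$$
of Section~\ref{sec:beyond-ev-coc}: every face and degeneracy featured above is a legitimate morphism in $\Corr(\Arr(\Stkperflfp))_{\niliso;\cart}$, and this functoriality transports the entire bar diagram for $H$ over $Q$ coherently into a pull-push diagram of formal completions over the Cech object $\Y^{\bullet+1}$ above $L\Y$, yielding the equivalence at the simplicial level.
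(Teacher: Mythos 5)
Your overall strategy is the paper's: identify each level geometrically by iterating the exterior-product propositions, then match the simplicial structure maps via the correspondence functoriality (the paper dismisses this last part as a routine exercise, so your extra detail there is fine). However, the step where you form $H^{\otimes_Q n}\usotimes{Q\otimes Q^\rev}H$ directly, with both factors being $\ICoh_0$-categories, is not justified by the results you cite. \propref{prop:exterior-product-ICohzero-over-QCoh} only treats tensor products of the form $\QCoh(\Y)\otimes_{\QCoh(\Z)}\ICoh_0(\V^\wedge_\U)$, i.e.\ one factor must be a plain $\QCoh$-category; and the version allowing two $\ICoh_0$-factors, \propref{prop:exterior-product-ICohzero-over-QCoh-ev-coc-case}, requires the fiber product of the two small stacks to be \emph{bounded}. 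In your final collapse that fiber product is exactly $\Y\times_{\Y\times\Y}\Y = L\Y$, which is unbounded precisely in the cases of interest (e.g.\ $\Y$ quasi-smooth but not smooth), so neither proposition applies as stated. This is not a cosmetic issue: the whole of Section~\ref{sec:beyond-ev-coc} exists because the exterior-product formula does not extend naively past the bounded case.

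The repair is the paper's opening move: use the standard bimodule identity to rewrite
$$
H^{\otimes_Q n}\usotimes{Q\otimes Q^\rev}H
\simeq
H^{\otimes_Q (n+1)}\usotimes{Q\otimes Q^\rev}Q .
$$
Now all the tensor products inside $H^{\otimes_Q(n+1)}$ involve only bounded fiber products (diagonals of $\Y$), so \propref{prop:exterior-product-ICohzero-over-QCoh-ev-coc-case} gives $H^{\otimes_Q(n+1)}\simeq\ICoh_0((\Y^{n+2})^\wedge_\Y)$; and the remaining tensor over $Q\otimes Q^\rev$ is against the plain $\QCoh$-category $Q$, which is exactly the situation covered by \propref{prop:exterior-product-ICohzero-over-QCoh} with no boundedness assumption on the resulting completion locus $L\Y$. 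With that reordering your argument goes through and coincides with the paper's.
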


\begin{proof}
For fixed $n \geq 0$, we have the obvious equivalence:
$$
H^{\otimes_Q n}
\usotimes{Q \otimes Q^\rev} H
\simeq
H^{\otimes_Q (n+1)}
\usotimes{Q \otimes Q^\rev} Q.
$$
Proposition \ref{prop:exterior-product-ICohzero-over-QCoh-ev-coc-case} gives the equivalence 
$$
H^{\otimes_Q (n+1)}
\xto{\; \simeq \; }
\ICoh_0((\Y^{n+2})^\wedge_\Y).
$$
Combining this with Proposition \ref{prop:exterior-product-ICohzero-over-QCoh}, we obtain
$$
H^{\otimes_Q n}
\usotimes{Q \otimes Q^\rev} H
\simeq
\ICoh_0((\Y^{n+2})^\wedge_\Y )
\usotimes{Q \otimes Q^\rev}
Q
\xto{\; \simeq \; }
\ICoh_0((\Y^{n+1})^\wedge_\LY ).
$$
It is a routine exercise (left to the reader) to unravel the structure functors.
\end{proof}

We can now state and prove the first part of our main theorem.

\begin{thm} \label{thm:Main-first-part}
For $\Y \in \Stkperfevcoclfp$, there is a canonical equivalence
$$
\Z(\H(\Y))
\simeq
\ICoh_0(\pt^\wedge_\LY).
$$
\end{thm}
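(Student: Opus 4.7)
The plan is to combine the formula (\ref{eqn:express the center using Q}) for the center with the identification of simplicial DG categories supplied by the preceding proposition, and then to invoke the descent result for $\ICoh_0$ established as Proposition~\ref{prop:descent for ICOHzero-general-case}.

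First, by formula (\ref{eqn:express the center using Q}) the center is computed as the geometric realization
\[
Z \simeq \uscolim{[n]\in\bDelta^\op} \Bigt{ H^{\otimes_Q n} \usotimes{Q \otimes Q^\rev} H },
\]
and the preceding proposition identifies the simplicial DG category on the right with $[n] \squigto \ICoh_0\bigt{(\Y^{n+1})^\wedge_{\LY}}$, the structure arrows being the $(*,0)$-pushforwards induced by the face and degeneracy maps of the Cech nerve of $\Y^\wedge_\LY \to \pt^\wedge_\LY$ (under $\LY$). Hence
\[
Z \simeq \uscolim{[n]\in\bDelta^\op} \ICoh_0\bigt{(\Y^{n+1})^\wedge_\LY}.
\]

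Next I would verify that $\LY = \Y\times_{\Y\times\Y}\Y$ lies in $\Stkperflfp$: perfection is preserved by fiber products by \cite[Prop.~3.24]{BFN}, and the cotangent complex of $\LY$ is obtained as a finite colimit of pullbacks of $\LL_\Y$, hence is perfect. Note that $\LY$ is in general unbounded, so the descent proposition of the unbounded case (Proposition~\ref{prop:descent for ICOHzero-general-case}) is the relevant one. Applied to the string $\LY \to \Y \to \pt$ inside $(\Stkperflfp)_{\LY/}$, its proof (which passes from $\Tot$ to $\colim$ by taking left adjoints) yields that the canonical arrow
\[
\uscolim{[n]\in\bDelta^\op} \ICoh_0\bigt{(\Y^{n+1})^\wedge_\LY} \longto \ICoh_0(\pt^\wedge_\LY),
\]
with transition maps the $(*,0)$-pushforwards, is an equivalence. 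Combining this with the displayed formula for $Z$ gives the theorem.

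The only genuinely non-formal point is the matching of simplicial structures: I must check that the face and degeneracy maps appearing in (\ref{eqn:express the center using Q}), which come from the multiplication of $H$ and the two commuting actions on $H$ in the bar complex, correspond under the identification of the preceding proposition to the $(*,0)$-pushforwards along the face and degeneracy maps of the Cech nerve of $\Y \to \pt$ based at $\LY$. This is a bookkeeping exercise: it reduces, via the explicit description of the differentials in Section~\ref{sssec:formula for cosimplicial arrows} and the exterior product equivalences of Propositions~\ref{prop:exterior-product-ICohzero-over-QCoh-ev-coc-case} and~\ref{prop:exterior-product-ICohzero-over-QCoh}, to the base-change compatibilities of $(*,0)$-pushforwards against $(!,0)$-pullbacks already established in Section~\ref{sec:beyond-ev-coc}. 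I expect this to be the most delicate step, as one must carefully pin down the $\LY$-dependence of the Cech simplicial object and ensure it is what emerges from iterated insertion of $\primeDelta_{*,0}$ and the action functors; once this is matched, the result falls out of the two propositions cited above.
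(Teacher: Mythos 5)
Your proposal is correct and follows essentially the same route as the paper: identify $Z$ with the colimit (equivalently, via the Beck--Chevalley/adjoint passage, the totalization) of the simplicial category $\ICoh_0((\Y^{\bullet+1})^\wedge_\LY)$ using (\ref{eqn:express the center using Q}) and the preceding proposition, then conclude by the descent statement of Proposition~\ref{prop:descent for ICOHzero-general-case} applied to $\LY \to \Y \to \pt$. The points you flag as delicate (membership of $\LY$ in $\Stkperflfp$ and the matching of simplicial structure maps) are precisely what the paper leaves implicit or "to the reader," so your emphasis is well placed.
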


\begin{proof} 
The above proposition yields the equivalence
$$
Z 
\simeq 
\Tot 
\Bigt{
\ICoh_0((\Y^{\bullet+1})^\wedge_\LY) 
},
$$
where the totalization is taken with respect to the $?$-pullbacks. It remains to apply descent on the RHS: we use Proposition \ref{prop:descent for ICOHzero-general-case} to obtain
\begin{equation} \label{eqn:descent for Dmods on LY}
\ICoh_0(\pt^\wedge_{L\Y})
\xto{\; \simeq \; }
 \Tot \Bigt{ \ICoh_0((\Y^{\bullet + 1})^\wedge_{L\Y})},
\end{equation}
as claimed.
\end{proof}

\ssec{Relation between $\H(\Y)$ and its center}

In the previous section, we have constructed an equivalence $Z \simeq \ICoh_0(\pt^\wedge_{\LY})$. Our next task is to describe what the adjunction $H \rightleftarrows Z$ becomes under such equivalence.

\sssec{} \label{sssec: s-adjucntions}

Let
$$
s: [\LY \to \Y]
\longto
[\Y \to \Y^2]
$$
be the map in $\Arr(\Stkperflfp)$ defined by the cartesian diagram
\begin{equation}  \nonumber
\begin{tikzpicture}[scale=1.5]
\node (00) at (0,0) {$\Y$};
\node (10) at (1.5,0) {$\Y \times \Y$.};
\node (01) at (0,.8) {$\LY $};
\node (11) at (1.5,.8) {$\Y$};
\path[->,font=\scriptsize,>=angle 90]
(00.east) edge node[above] {$ $} (10.west);
\path[->,font=\scriptsize,>=angle 90]
(01.east) edge node[above] {$ $} (11.west);
\path[->,font=\scriptsize,>=angle 90]
(01.south) edge node[right] {$ $} (00.north);
\path[->,font=\scriptsize,>=angle 90]
(11.south) edge node[right] {$ $} (10.north);
\end{tikzpicture}
\end{equation}
By its very construction, the functor $s^{!,0}: \ICoh_0((\Y \times \Y)^\wedge_\Y) \to \ICoh_0(\Y^\wedge_\LY)$ is given by
$$
H
\simeq
(Q \otimes Q) \usotimes{Q \otimes Q^\rev} H
\xto{ \Delta^* \otimes \id}
Q \usotimes{Q \otimes Q^\rev} H
\xto{\simeq}
 \ICoh_0(\Y^\wedge_\LY).
$$ 
By adjuction, $s_?$ is the continuous functor induced by the $\QCoh$-pushforward $\Delta_*$. Since $\Delta$ is affine, the functor $s_?$ is monadic.

\sssec{} 

Denote by 
$v: [\LY \to \Y] \to [\LY \to \pt]$
the obvious morphism in $\Arr(\Stkperflfp)$. As seen above, the adjunction
$$ 
\begin{tikzpicture}[scale=1.5]
\node (a) at (0,1) {$\QCoh(\LY) $};
\node (b) at (3,1) {$ \ICoh_0(\Y^\wedge_\LY)$};
\path[->,font=\scriptsize,>=angle 90]
([yshift= 1.5pt]a.east) edge node[above] { $v_{*,0}$} ([yshift= 1.5pt]b.west);
\path[->,font=\scriptsize,>=angle 90]
([yshift= -1.5pt]b.west) edge node[below] {$v^?$ } ([yshift= -1.5pt]a.east);
\end{tikzpicture}
$$
is obtained from the adjunction
$$ 
\begin{tikzpicture}[scale=1.5]
\node (a) at (0,1) {$H \otimes_{Q} H $};
\node (b) at (2,1) {$H $};
\path[->,font=\scriptsize,>=angle 90]
([yshift= 1.5pt]a.east) edge node[above] {$\wt m$ } ([yshift= 1.5pt]b.west);
\path[->,font=\scriptsize,>=angle 90]
([yshift= -1.5pt]b.west) edge node[below] { $(\wt m)^R$} ([yshift= -1.5pt]a.east);
\end{tikzpicture}
$$
by tensoring up with $H \otimes_{H \otimes H^\rev} -$.

\sssec{}

We are now ready to combine all the constructions above in the second part of our main theorem, which describes the relationship of $\Z(\H(\Y))$ and $\H(\Y)$ in geometric terms.

\begin{thm} \label{MAIN-thm}
Under the equivalence 
$\ICoh_0(\pt^\wedge_{L\Y})
\simeq
\Z(\H(\Y))$
of Theorem \ref{thm:Main-first-part}, the adjunction 
$$ 
\begin{tikzpicture}[scale=1.5]
\node (a) at (0,1) {$\H(\Y)$};
\node (b) at (2,1) {$\Z(\H(\Y))$};
\path[->,font=\scriptsize,>=angle 90]
([yshift= 1.5pt]a.east) edge node[above] {$\ev^L$} ([yshift= 1.5pt]b.west);
\path[->,font=\scriptsize,>=angle 90]
([yshift= -1.5pt]b.west) edge node[below] {$\ev$} ([yshift= -1.5pt]a.east);
\end{tikzpicture}
$$ 
goes over to the (monadic) adjunction
\begin{equation} \nonumber
\begin{tikzpicture}[scale=1.5]
\node (a) at (0,1) {$\ICoh_0((\Y \times \Y)^\wedge_\Y)$};
\node (b) at (4,1) {$\ICoh_0(\pt^\wedge_{L\Y})$.};
\path[->,font=\scriptsize,>=angle 90]
([yshift= 1.5pt]a.east) edge node[above] {$v_{*,0} \circ s^{!,0}$} ([yshift= 1.5pt]b.west);
\path[->,font=\scriptsize,>=angle 90]
([yshift= -1.5pt]b.west) edge node[below] {$s_? \circ v^{!,0} $} ([yshift= -1.5pt]a.east);
\end{tikzpicture}
\end{equation}
\end{thm}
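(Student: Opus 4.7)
The plan is to execute the strategy sketched in Section \ref{ssec:overview-computing-center}, which computes the monad $\ev \circ \ev^L$ as a pull-push through the horocycle diagram. Since $H = \H(\Y)$ is rigid (as established just before Section \ref{sssec:official-definition of H(Y)}), the general formalism of Section \ref{sssec:description of ZA for A rigid} identifies $\ev$ as a conservative monadic functor with underlying monad $\ev \circ \ev^L \simeq m^{\rev} \circ m^{R}$. By construction of the convolution structure (Section \ref{sssec:official-definition of H(Y)}),
$$
m \simeq (p_{13})_{*,0} \circ (p_{12} \times p_{23})^{!,0}, \qquad m^R \simeq (p_{12} \times p_{23})_{?} \circ (p_{13})^{!,0},
$$
where the second expression is obtained by taking right adjoints of each factor.

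Next, I chase $m^{\rev} \circ m^R$ through the horocycle diagram (\ref{diag:crazy-diagram}). The four inner squares are all base-change squares for $\ICoh_0$: the two left squares only involve formal completions of the bounded stacks $\Y, \Y^2, \Y^3$ in $\Y^n$, hence commute by the $\2$-functoriality of Theorem \ref{thm:functoriality of ICOHzero}; the two right squares involve the (generally unbounded) loop stack $L\Y$, and commute by virtue of Proposition \ref{prop:exterior-product-ICohzero-over-QCoh}, which is precisely the device used in Section \ref{sssec:exceptional pull and push in bounded case} (and its unbounded variant) to define the $?$-pushforward and $(!,0)$-pullback. Stringing together the four commutative squares yields a natural isomorphism
$$
\ev \circ \ev^L \simeq (s_{?} \circ v^{!,0}) \circ (v_{*,0} \circ s^{!,0})
$$
of endofunctors of $H$, matching the monad of the candidate adjunction.

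To upgrade this into an equivalence of monadic adjunctions under the identification of Theorem \ref{thm:Main-first-part}, I first check that $s_{?} \circ v^{!,0}$ is conservative: $s_{?}$ is induced by pushforward along the affine diagonal $\Delta: \Y \to \Y \times \Y$ (see Section \ref{sssec: s-adjucntions}), and $v^{!,0}$ is conservative by the descent Proposition \ref{prop:descent for ICOHzero-general-case} applied to $\Y \to \pt$. Barr--Beck then reduces the problem to checking that the monad isomorphism produced by the horocycle diagram is compatible with the monad structures. This is the step I expect to be the main technical obstacle: the isomorphism of paragraph~2 is obtained by pure base-change of underlying functors, and promoting it to a monad isomorphism requires tracing the bar resolutions on both sides through the chain of equivalences used in Theorem \ref{thm:Main-first-part}, in particular the identification (\ref{eqn:compute the center geometrically}) of $H^{\otimes_Q n} \otimes_{Q \otimes Q^\rev} H$ with $\ICoh_0((\Y^{n+1})^\wedge_{L\Y})$. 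Concretely, one argues that the unit and multiplication of both monads arise from the simplicial structure maps of $\ICoh_0((\Y^{\bullet+1})^\wedge_{L\Y})$, matched on the $\Z$-side via the Bar presentation (\ref{eqn:express the center using Q}) and on the $\ICoh_0(\pt^\wedge_{L\Y})$-side via the descent equivalence (\ref{eqn:descent for Dmods on LY}). Granting this termwise compatibility, the theorem follows.
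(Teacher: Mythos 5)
You have reproduced the heuristic outline of Section \ref{ssec:overview-computing-center}, and the step you yourself flag as ``the main technical obstacle'' is precisely where the proof is missing: you produce an isomorphism $\ev \circ \ev^L \simeq s_?\, v^{!,0}\, v_{*,0}\, s^{!,0}$ only at the level of underlying endofunctors of $\H(\Y)$, and then write ``granting this termwise compatibility, the theorem follows.'' Promoting that isomorphism to an isomorphism of \emph{monads} (with all higher coherences), and then checking that the resulting Barr--Beck equivalence agrees with the \emph{specific} equivalence of Theorem \ref{thm:Main-first-part} --- which is what the statement actually asserts --- are exactly the hard points, and matching units and multiplications ``termwise'' does not produce the coherence data. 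As submitted, the decisive step is deferred rather than proved.

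The paper's proof avoids both issues by a shorter route. Since the equivalence $Z \simeq \ICoh_0(\pt^\wedge_{L\Y})$ is already in hand from Theorem \ref{thm:Main-first-part}, it suffices to identify the single right-adjoint functor $\ev \colon Z \to H$ with $s_? \circ v^{!,0}$; the left adjoints then correspond automatically by uniqueness of adjoints, and no monad structures ever need to be compared. Concretely, using the presentation (\ref{eqn:express the center using Q}) of $Z$ as $H \otimes_{H \otimes H^\rev} H$, the evaluation functor factors as
$$
H \usotimes{H \otimes H^\rev} H
\xto{\;(\wt m)^R \otimes \id\;}
(H \otimes_Q H) \usotimes{H \otimes H^\rev} H
\;\simeq\;
Q \usotimes{Q \otimes Q^\rev} H
\xto{\;\Delta_* \otimes \id\;}
H,
$$
and by the identifications of Section \ref{sssec: s-adjucntions} (where $s^{!,0}$ and $s_?$ are matched with $\Delta^* \otimes \id$ and $\Delta_* \otimes \id$, and the $v$-adjunction with $\wt m \dashv (\wt m)^R$ tensored up over $H \otimes H^\rev$) these two factors are exactly $v^{!,0}$ and $s_?$. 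With this argument your horocycle-diagram chase, the conservativity check, and the monad comparison all become unnecessary.
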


\begin{proof}
It suffices to show that the composition
$$
\ICoh_0(\pt^\wedge_{L\Y})
\xto{\; \simeq \; }
Z
\xto{\ev}
H
$$
corresponds to the functor $s_? \circ v^{!,0}$. By (\ref{eqn:express the center using Q}), the structure functor $\ev$ is the composition
$$
Z
:=
H
\usotimes{H \otimes H^\rev} 
H
\xto{(\wt m)^R \otimes \id_H}
(H \otimes_Q H ) 
\usotimes{H \otimes H^\rev} 
H
\simeq
Q
\usotimes{Q \otimes Q^\rev} 
 H
\xto{\Delta_* \otimes \id_H}
H,
$$
so that the assertion follows the remarks above. 
\end{proof}

\begin{example}

Let us illustrate the above adjunction in the case where $\Y = G$ is a group DG scheme (as usual, bounded and lfp).
The automorphism of $G \times G$ given by $(x, y) \mapsto (x, xy)$ yields an equivalence
\begin{equation} \label{eqn: H for group}
\H(G) 
\simeq 
\QCoh(G) \otimes \ICoh(G^\wedge_\pt)
\end{equation}
of DG categories. Now set $\Omega G := \pt \times_G \pt \simeq \Spec (\Sym \g^*[1])$. Since
\begin{equation} \label{eqn:LG as a product}
LG \simeq G \times \Omega G
\end{equation}
canonically, we obtain
\begin{equation} \label{eqn:Z of H(G)}
\Z(\H(G)) \simeq
\ICoh_0(\pt^\wedge_{LG})
\simeq
\Dmod(G) \otimes \vDmod(\Omega G),
\end{equation}
\begin{equation} \label{eqn: intermediate cat for  H(G)}
\ICoh_0(G^\wedge_{LG})
\simeq
\ICoh_0(G^\wedge_{G \times \Omega G})
\simeq
\QCoh(G) \otimes \vDmod(\Omega G).
\end{equation}
Recalling the example treated in Section \ref{example:loop Omega of a scheme}, the adjunction $\H(G) \rightleftarrows \Z(\H(G))$ reads:
\begin{equation} 	\nonumber
\begin{tikzpicture}[scale=1.5]
\node (a) at (0,1) {$\QCoh(G) \otimes \ICoh(G^\wedge_\pt)$};
\node (b) at (4,1) {$\QCoh(G) \otimes \vDmod(\Omega G)$};
\node (c) at (7.7,1) {$\Dmod(G) \otimes \vDmod(\Omega G)$,};
\path[ ->,font=\scriptsize,>=angle 90]
([yshift= 1.5pt]a.east) edge node[above] { $\id \otimes \ind_\WW$} ([yshift= 1.5pt]b.west);
\path[->,font=\scriptsize,>=angle 90]
([yshift= -1.5pt]b.west) edge node[below] { $\id \otimes \oblv_\WW$ } ([yshift= -1.5pt]a.east);
\path[ ->,font=\scriptsize,>=angle 90]
([yshift= 1.5pt]b.east) edge node[above] {$ \ind_L \otimes \id$} ([yshift= 1.5pt]c.west);
\path[->,font=\scriptsize,>=angle 90]
([yshift= -1.5pt]c.west) edge node[below] {$\oblv_L \otimes \id$} ([yshift= -1.5pt]b.east);
\end{tikzpicture}
\end{equation}
where $\ind_L$ is the induction functor for left $\fD$-modules (which makes sense as $G$ is bounded).
\end{example}


\end{document}